\documentclass[10pt,a4paper,twoside]{amsart} 
\usepackage[utf8]{inputenc}
\usepackage[T1]{fontenc}
\usepackage[english]{babel}
\usepackage{microtype}
\usepackage{amsmath,amsthm,amssymb,mathtools,mathrsfs}

\usepackage{braket}
\usepackage{hyperref}
\usepackage[noabbrev,capitalize]{cleveref}
\usepackage{latexsym}
\usepackage{tikz}
\usepackage{tikz-cd}
\usepackage{float}
\usetikzlibrary{decorations.markings, arrows.meta, calc}

\tikzset{every picture/.style={line width=0.75pt}} %set default line width to 0.75pt 

	\hypersetup{hidelinks}
	\usepackage{wrapfig}
	\usepackage{lipsum,yhmath}    
	\usepackage{booktabs}
	\usepackage{graphicx}
        \usepackage{float}
	\usepackage{indentfirst}
	\usepackage{quoting}
	\usepackage{amsmath,amssymb}
	\quotingsetup{font=small}
	\usepackage{listings}
	\usepackage{multirow}
	\usepackage[lighttt]{lmodern}
	\usepackage{mathrsfs}
	\usepackage{amsthm}
	\usepackage{enumitem}
	\usepackage{array}
	\usepackage{tabularx}
	\usepackage{tikz}
	\usepackage{imakeidx}
        \usepackage{subcaption}
	\usepackage{pgfplots}
	\usepackage{bbm}
	\usepackage{amssymb}
	\usepackage{comment}
	\usepackage{wasysym}
	
	\usepackage{pifont}
	\usepackage{tikz-cd}
	\usepackage{fontawesome}
	\usepackage{easymat}
	\usepackage{xcolor}
	\usepackage{stmaryrd}
	\usepackage{hyperref}
	\hypersetup{hidelinks}
	
\newcommand{\longleadsto}{\tikz[baseline=-0.5ex]{\draw[->,decorate,decoration={snake,amplitude=.4mm,segment length=2mm}] (0,0) -- (2,0);}}
\newcommand{\giusi}[1]{{\color{purple}{\texttt G$\iota$us$\iota$: #1}}}

\newcommand{\ZZ}{\mathbb Z}

\newcommand{\R}{\mathbb R}

\newcommand{\ga}{\gamma}
\newcommand{\tga}{\tilde{\gamma}}

\newcommand{\Ga}{\Gamma}
\newcommand{\tGa}{\widetilde{\Gamma}}
\newcommand{\tal}{\widetilde{\alpha}}
\newcommand{\tbe}{\widetilde{\beta}}

\newcommand{\Div}{\operatorname{Div}}

\newcommand{\dil}{\mathrm{dil}}
\newcommand{\wt}[1]{\widetilde{#1}}

\newcommand{\calR}{\mathcal{R}}
\newcommand{\rr}{\operatorname{r}}

\DeclareMathOperator{\Pic}{Pic}
\DeclareMathOperator{\Jac}{Jac}

\DeclareMathOperator{\Prym}{Prym}
\DeclareMathOperator{\Id}{Id}
\DeclareMathOperator{\Ker}{Ker}

\DeclareMathOperator{\Hom}{Hom}

\let\Im\relax
\DeclareMathOperator{\Im}{Im}

 \theoremstyle{definition}
 \newtheorem{defn}{Definition}
 \newtheorem{construction}{Construction}
 
 \newtheorem{notation}{Notation}
  \newtheorem{lem}{Lemma}
   \newtheorem{ex}{Example}
  
 \newtheorem{rmk}{Remark}
 \theoremstyle{plain}
 \newtheorem{thm}{Theorem}[section]
 \newtheorem{prop}[thm]{Proposition}

\newtheorem{maintheorem}{Theorem}	
\DeclareMathOperator{\rk}{rk}
\newcommand{\La}{\Lambda}
\newcommand{\Si}{\Sigma}

\title{Continuity of the tropical Prym--Torelli map}
\author{Giusi Capobianco}
\address{Department of Mathematics\\ University of Roma Tor Vergata \\ 00133 Rome, Italy}
\email{\href{mailto:capobianco@axp.mat.uniroma2.it}{capobianco@axp.mat.uniroma2.it}}

\date{}

\begin{document}
\begin{abstract}
    We give a new intrinsic construction of the continuous tropical Prym variety, originally due to Röhrle and Zakahrov. In particular, given a harmonic double cover of graphs we explicitly construct the tropical Prym as integral torus. Moreover, we show that   the tropical Prym--Torelli map from the moduli space of harmonic double covers to the moduli space of principally polarised tropical abelian varieties is continuous. % In particular, this allows to construct a tropical universal Prym variety.
\end{abstract}
\thanks{}
\maketitle
\tableofcontents

\section{Introduction}
 Given  an étale double cover of smooth curves $f\colon \wt C\to C$, one can associate a principally polarised abelian variety  called the \emph{Prym variety} 
defined as the connected component of the identity of the kernel of the norm map
$\mathrm{Nm}_f: \Jac(\wt C) \to \Jac(C),$ namely
\[
\mathrm{Prym}(f) := \ker(\mathrm{Nm}_f)_0.
\]
Prym varieties can be thought of as generalisations of Jacobians
since the Jacobian locus is contained in the closure of the Prym locus, \cite{Wirtinger1895,BorowkaOrtega2022}.
The Prym-theoretic analog of the Torelli morphism is the so-called Prym--Torelli map  
\[
P_g\colon \mathcal R_g\to\mathcal A_{g-1},
\]
 from the moduli space of double covers to the moduli of principally polarised abelian varieties assigning to an étale double cover $f\colon \widetilde C\to C$ its Prym variety $\mathrm{Prym}(f)$.
%It is well known that the Prym--Torelli map is generically finite for $g\geq 6$, (\cite[Wirtinger theorem]{Beauville1977Prym}).
%An important question regards the injectivity of this map, first studied by Beauville in \cite{Beauville1977Prym} proving that the map is not globally injective. Later,  Donagi in \cite{donagi}, using the tetragonal construction, provided examples of distinct double covers with isomorphic Prym varieties in any genus. Nonetheless, \cite{Kanev1982Prym} and \cite{FriedmanSmith1982} independently proved that the map has generically degree one for $g\geq 7.$

In tropical geometry, the definition of  tropical Prym variety  was carried out by Jensen and Len in \cite{JensenLen_thetachars}.  Tropical Prym varieties arise   from harmonic morphisms of metric graphs, reflecting the classical construction associated with étale double covers of algebraic curves. 
 It was later shown that the Prym construction commutes with tropicalization \cite{LenUlirsch_Prym} and Prym varieties were
further investigated in
\cite{reiner2014critical, GZ24}.

However, it has been shown in \cite[Theorem 3.3, Example 3.6]{GZ24} that with this naive definition of the tropical Prym variety, the volume of the Prym
does not vary continuously in families.
More importantly, the tropical Prym--Torelli map $P_g^{trop}\colon \mathcal R^{trop}_g\to \mathcal A^{trop}_{g-1}$ is not continuous.

Therefore, in \cite{RZ_ngonal}, the authors modify the naive definition of the Prym variety that it is now referred to as $\Prym _d$, where $d$ stands for \emph{divisorial}, introducing the \emph{continuous} $\Prym$ variety  $\mathrm{Prym_c}$ and explore the relations between these two tropical abelian varieties in \cite[Section 4.4]{RZ_ngonal}.
The continuous Prym variety always carries a natural principal polarization and
comes with a natural isogeny $\mathrm{Prym}_c\to\mathrm{Prym}_d$ of degree $2^{d-1}$, where $d$ is the number of connected components of the dilation subgraph of the graph $\Gamma$. Under the assumption that the double cover $\pi\colon \tGa\to \Ga$ is either free or the dilation subgraph is connected, the authors show that the two definitions coincide, namely $\mathrm{Prym}_c(\pi)=\mathrm{Prym}_d(\pi)$.
The authors show that, within this setting, the volume of $\Prym_c$ varies continuously under the contraction of those edges that increase the genus of the dilated subgraph. 
%However, the authors do not address the modular continuity of the tropical Prym--Torelli map $\mathcal{R}^{trop}_g\to\mathcal A^{trop}_{g-1}$.
However,  since their result is purely tropical, they do not take into account the 
fact that the tropical Prym variety drops dimension when contracting multiple cycles, namely the existence of positive weights in the vertices of the tropical curve, that we call weighted metric graphs.

In this paper, we investigate this problem in the more general setting and give a novel and intrinsic construction for the continuous Prym variety of harmonic double covers. Given a double cover $\pi\colon \tGa\to\Ga$ of weighted metric graphs of genera $2g_{\Ga}-1$ and $g_{\Ga}$ and weights respectively $\tilde w$ and $w$, we define the \emph{extended} Prym variety the torus of dimension $g_{\Ga}-1$ obtained by adding to $\mathrm{Prym_c}(\pi)$, which has dimension $b_{1}(\tGa)-b_1(\Ga)$, a total of $\sum \tilde w-\sum w$ dimensions. As a consequence, we provide a proof of the continuity of the tropical Prym--Torelli map, a result that, to the best of our knowledge, does not yet appear in the literature although it is known to the experts.

%We relate this  construction to the already existing constructions of bases for anti-symmetric cycles  in \cite{LZ22} and in \cite{RZ_ngonal}. In \cite{LZ22}, the authors give a construction for the basis for anti-symmetric cycles for free double covers by starting from a spanning tree of $\Ga$ and choosing a specific spanning tree for $\tGa$, while in \cite{RZ_ngonal} the authors provide a basis for the lattice that defines the continuous Prym varietyby degenerating the basis obtained for free double covers following Construction A in \cite{LZ22}. Our construction generalises both and provides a basis for that lattice that is compatible with the contraction of edges.

%by exhibiting a specific basis that spans the lattice defining the continuous Prym variety introduced in \cite{RZ_ngonal}. In particular,in \cref{sec:continuity}, 

\begin{maintheorem}The tropical Prym--Torelli map $P^{trop}_g\colon \mathcal R^{trop}_ {g+1} \to\mathcal A^{trop}_g $, assigning to a harmonic double cover its extended continuous Prym variety, is continuous. \end{maintheorem}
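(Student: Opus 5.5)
Both $\mathcal R^{trop}_{g+1}$ and $\mathcal A^{trop}_g$ are generalized cone complexes (stacky fans): the first is glued from cones $\R_{>0}^{E(\Ga)}$, one for each combinatorial type of harmonic double cover $\pi\colon\tGa\to\Ga$ of weighted graphs, modulo automorphisms; the second from cones $\mathrm{PD}_g^{rt}/GL_g(\ZZ)$ of positive semidefinite forms with rational kernel. I would therefore argue cone by cone. Continuity then reduces to two things. First, on each open cone $\sigma_\pi$ the map $\ell\mapsto Q_\pi(\ell)$ is induced by a linear map $\R^{E(\Ga)}\to\mathrm{Sym}^2(\R^g)^\vee$. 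Second, these linear maps are compatible with the face maps, which are the edge contractions $\pi\leadsto\pi/e$.

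For the first step I fix a combinatorial type $\pi$ and choose a $\ZZ$-basis of the lattice of anti-invariant cycles $H_1(\tGa,\ZZ)^-$ defining $\Prym_c(\pi)$, as in \cite{RZ_ngonal}. I then extend it by $\sum\tilde w-\sum w$ further basis vectors, corresponding to the "virtual" anti-invariant cycles carried by vertex weights. On these extra vectors the form is zero. The quadratic form of the extended Prym is then $\frac12$ (or $1$ on the dilated part, following the continuous normalisation) times the restriction of the edge-length form $\sum_{\tilde e}\ell(\tilde e)\,\tilde e^{*2}$ to the anti-invariant part. Each $\ell(\tilde e)$ equals $\ell(\pi(\tilde e))$, possibly divided by the dilation factor. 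So in this basis the Gram matrix has entries linear in $\ell\in\R^{E(\Ga)}_{>0}$, which gives a linear map $\sigma_\pi\to\mathrm{PD}^{rt}_g$. Positive semidefiniteness with rational kernel is clear, since the kernel is spanned by the added basis vectors.

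The main step, and the one I expect to be the obstacle, is compatibility under specialisation. Let $\ell_n\to\ell$ with $\ell(e)=0$ for the edges in a set $S$. I need a basis of the extended lattice of $\pi$ that restricts to a basis of the extended lattice of $\pi/S$ modulo the newly created weights. Concretely, I would construct the anti-invariant cycle basis adapted to a spanning tree of $\Ga$ containing as much of $S$ as possible, and treat separately the three ways a contracted edge changes the data.
\begin{enumerate}
\item Contracting a non-loop edge changes nothing in the lattices.
\item Contracting a loop of $\Ga$ whose preimage is two loops or a cycle raises weights on $\Ga$ and on $\tGa$. The resulting jump in $\sum\tilde w-\sum w$ must equal the drop in $b_1(\tGa)-b_1(\Ga)$.
\item Contracting edges that change the genus of the dilation subgraph or merge its components is the most delicate case. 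There the continuous normalisation of \cite{RZ_ngonal} (index $2^{d-1}$) is exactly what makes the basis vectors degenerate to genuine lattice vectors rather than to twice lattice vectors. A careful case analysis is needed here.
\end{enumerate}
Genus counting via Riemann--Hurwitz, $g(\tGa)=2g(\Ga)-1$ with weights included, ensures the dimension $g$ is constant. With an adapted basis, the Gram matrix of $\pi/S$ is the limit of that of $\pi$ with the $S$-coordinates set to $0$, up to a fixed $GL_g(\ZZ)$ change of basis.

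Finally, continuity on the quotient follows formally. The map is invariant under automorphisms of $\pi$, since these act on the lattice by $GL_g(\ZZ)$. The topology on both spaces is the colimit topology of the cone diagrams, and the maps commute with all face morphisms. Hence $P_g^{trop}$ is continuous.
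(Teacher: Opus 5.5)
Your architecture is the same as the paper's: a Gram matrix linear in the edge lengths on each open cone, compatibility with the face maps given by edge contractions, then the colimit topology. However, the step that carries all the content is only asserted. You never show that the Prym lattice of $\pi'$ has a $\ZZ$-basis that specialises to a basis of the lattice of $\pi=\pi'/S$, plus vectors supported on $\wt S$ that become the new weight directions. In the paper this is \cref{construction} together with \cref{thm:basis} and \cref{continuityofbasis}. Your sketched case analysis would not go through as written, for four reasons.

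\textbf{Wrong lattice.} $H_1(\tGa,\ZZ)^-=\Ker\pi_*$ is the lattice of $\Prym_d$, not of $\Prym_c$, whose lattice is $\Im(\Id-\iota_*)$. With $\Ker\pi_*$ the compatibility actually fails. In \cref{ex:continuity}, write $b$ for the bridge and $x_1,x_2$ for the loops. The free cover's lattice is generated by $2(\tilde b^+-\tilde b^-)\pm(\tilde x_1^+-\tilde x_1^-)\pm(\tilde x_2^+-\tilde x_2^-)$. This contracts to $2(\tilde b^+-\tilde b^-)$, which has index $2$ in $\Ker\pi_*$ of the limit.

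\textbf{Case (1) is false.} Let $e$ be a non-dilated edge joining two dilated vertices. Its preimage is two parallel edges, so contracting it lowers $b_1(\tGa)$ but not $b_1(\Ga)$. It also kills the lattice vector $(\Id-\iota_*)(\tilde e^+-\tilde e^-)=2(\tilde e^+-\tilde e^-)$. These edges belong to your case (3), so your cases overlap.

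\textbf{Case (2) is more than weight bookkeeping.} Contracting a $\pi$-parallel loop preserves all ranks but creates a new dilated component. That is exactly where the factor $2$ of \cref{ex:continuity} arises.

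\textbf{Normalisation.} The scalar is uniformly $\tfrac12$; the continuous normalisation lives in the choice of lattice, not in the scalar. Dilated edges never contribute, because anti-invariant cycles have zero coefficient on them.

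The missing step has a short proof that needs no case analysis. The contraction $h_{\wt S}$ is $\iota$-equivariant and induces a surjection $h_*\colon H_1(\tGa',\ZZ)\to H_1(\tGa,\ZZ)$ with kernel $H_1(\wt S,\ZZ)$. Hence $h_*$ maps $\Im(\Id-\iota'_*)$ onto $\Im(\Id-\iota_*)$, with kernel $K$ consisting of cycles supported on $\wt S$. The target is free, so this surjection splits. A lift of a basis of $\Im(\Id-\iota_*)$ together with a basis of $K$ is therefore a basis of $\Im(\Id-\iota'_*)$. Next, $g(\tGa)-g(\Ga)$ is constant under contraction and $\rk\Im(\Id-\iota_*)=b_1(\tGa)-b_1(\Ga)$. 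Together these give $\rk K=(|\tilde w|-|w|)-(|\tilde w'|-|w'|)$, so $K$ accounts exactly for the new weight directions. In this basis the form $\tfrac12\sum_{\tilde e}\ell(\tilde e)\langle\cdot,\tilde e\rangle\langle\cdot,\tilde e\rangle$ of $\pi'$ converges to $\mathrm{diag}(Q_\pi,0)$ as $\ell(\wt S)\to 0$, which is what you need. The same argument shows why $\Prym_c$ is the right object rather than $\Prym_d$: $h_*(\Ker\pi'_*)$ can be a proper sublattice of $\Ker\pi_*$.

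The paper instead builds explicit compatible bases. The freedom it needs is in the tree of $\tGa$, not only in the tree of $\Ga$: admissible trees may contain $\tilde x^+$ for $\pi$-parallel loops and may omit some $\tilde e^-$ with $e\in T$. The abstract splitting argument gives existence directly, without that tree combinatorics.
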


In order to prove this theorem, we give a basis for anti-symmetric cycles associated to a double cover $\pi\colon\tGa\to \Ga$ initiated by Melo and Len for which it is more amenable to proving that the Prym variety is continuous under edge contractions. 
We relate this construction with the already existing  constructions in \cite{LZ22,RZ_ngonal}. 
While the authors in  \cite{LZ22} construct the basis for anti-symmetric cycles for the divisorial Prym variety by starting from a spanning tree of $\Ga$ and choosing a specific spanning tree for $\tGa$, in \cite{RZ_ngonal} the authors provide bases for the lattices that define both the divisorial and the continuous Prym variety
by degenerating the basis obtained for free double covers in \cite{LZ22}.
    The idea behind our construction is to give more  freedom to the choice of the spanning tree of the source graph $\tGa$ in order to provide a basis for the lattice that is compatible with the contraction of edges and that is intrinsic.

\begin{maintheorem}[Theorem \ref{thm:basis}]
    The  set of cycles given by Construction \ref{construction} is a basis for the lattice defining $\mathrm{Prym}_c$.
    \end{maintheorem}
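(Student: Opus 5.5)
The plan is to prove \cref{thm:basis} by a triangularity argument with respect to the edges of $\tGa$ lying outside a suitable spanning tree. I take the lattice defining $\mathrm{Prym}_c$ to be the anti-invariant part
\[
H_1(\tGa,\ZZ)^- = \{\gamma \in H_1(\tGa,\ZZ) : \iota_*\gamma = -\gamma\} = \Ker\big(\pi_*\colon H_1(\tGa,\ZZ)\to H_1(\Ga,\ZZ)\big),
\]
as in \cite{RZ_ngonal}. This sublattice is saturated and has rank $b_1(\tGa)-b_1(\Ga)$. There are three steps: each cycle of Construction \ref{construction} lies in this lattice, there are exactly the right number of them, and they generate it over $\ZZ$, not just over $\mathbb Q$.

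\textbf{Membership and count.} Membership should follow directly from the form of the cycles. Each one is either of the form $\tilde\gamma - \iota_*\tilde\gamma$ for a lifted path in $\tGa$, or a closed path through the dilation subgraph $\Ga_{\dil}$ together with its $\iota$-image traversed with opposite sign. In either case $\iota_*$ reverses the sign. For the count, I will use the Euler characteristic relation
\[
\chi(\tGa) = 2\chi(\Ga) - \chi(\Ga_{\dil}).
\]
This expresses $b_1(\tGa)-b_1(\Ga)$ as
\[
\big(b_1(\Ga) - b_1(\Ga_{\dil})\big) + \big(\#\text{components of } \Ga_{\dil} - 1\big)
\]
when $\Ga_{\dil}\neq\emptyset$, and as $b_1(\Ga)-1$ in the free case. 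I then match these summands with the edges of $\Ga$ outside the chosen spanning tree $T$ (equivalently, outside a spanning forest of $\Ga_{\dil}$ extended to $T$) and with the connecting paths between dilation components used in the construction.

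\textbf{Linear independence and generation.} Let $\tilde T$ be the spanning tree of $\tGa$ produced in Construction \ref{construction}. I will order the cycles so that each contains a distinguished edge $\tilde e\notin\tilde T$ with coefficient $\pm1$, and no earlier cycle involves $\tilde e$ or $\iota\tilde e$. This makes the coefficient matrix on these distinguished edges unitriangular, which gives independence immediately.

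For generation, take $\gamma\in H_1(\tGa,\ZZ)^-$. Edges fixed by $\iota$ carry coefficient $0$ in $\gamma$, and the coefficients on $\tilde e$ and $\iota\tilde e$ are opposite. So subtracting integer multiples of the basis cycles, inductively along the ordering, kills every coefficient of $\gamma$ off $\tilde T$. A cycle supported on a tree is zero, so $\gamma$ is an integral combination of the basis cycles. Since every coefficient used in these subtractions is $\pm1$, the argument never leaves $\ZZ$.

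\textbf{Main obstacle.} The hard part is the cycles that join different connected components of $\Ga_{\dil}$. For such a cycle, a lift of a path between two dilation components together with its $\iota$-image closes up only after passing through the dilated parts. Its distinguished edge may then meet both a lift and its conjugate, which could produce coefficient $2$ rather than $\pm1$. This is exactly where $\mathrm{Prym}_c$ and $\mathrm{Prym}_d$ differ, by the index $2^{d-1}$. Here I would first check carefully that, because of the extra freedom the construction allows in choosing $\tilde T$, exactly one lift of each connecting edge lies outside $\tilde T$. That would keep the distinguished coefficient equal to $\pm1$. I would treat the free case and the connected-$\Ga_{\dil}$ case first, comparing with \cite{LZ22,RZ_ngonal}, and then handle several dilation components by induction on their number.
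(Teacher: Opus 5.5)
\textbf{The target lattice is wrong.} $H_1(\tGa,\ZZ)^-=\Ker\pi_*$ is the lattice of $\mathrm{Prym}_d$, not of $\mathrm{Prym}_c$. The continuous Prym is built on $\Im(\Id-\iota_*)$, whose saturation is $\Ker\pi_*$ with index $2^{d-1}$. This is the content of \eqref{eq:Prymcbasis2}, where the generators are $2\tbe_j$ rather than $\tbe_j$. Every cycle of Construction~\ref{construction} has the form $\beta_e=(\Id-\iota_*)\tga_e^-$, so all of them lie in $\Im(\Id-\iota_*)$. Hence for $d\ge 2$ they cannot generate $\Ker\pi_*$: the statement you set out to prove is false in exactly the case you flag as the main obstacle. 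Your description of the second kind of cycle shows where the misreading enters. A path between two dilation components minus its $\iota$-image is the anti-invariant cycle $\tga_f^-$ itself. For $f\in E^{\wt T}_{\pi}$, however, the construction produces $\beta_f=\tga_f^--\iota\tga_f^-$, which equals $2\tga_f^-$ when no $\pi$-parallel loop is involved.

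\textbf{Your proposed fix cannot work, because the coefficient $2$ is forced.} Admissibility gives $\pi^{-1}(T)^+\subseteq\wt T$. The path in $\wt T$ joining the endpoints of $\tilde f^-$ projects to a walk in $T$ (plus closed loops from $P_\pi$), and that walk crosses $f$ an odd number of times. Since $\tilde f^-\notin\wt T$, the path crosses $\tilde f^+$ exactly once. Hence $\langle\tga_f^-,\tilde f^+\rangle=-1$ and $\langle\beta_f,\tilde f^-\rangle=1-(-1)=2$ for every admissible $\wt T$; this is the entry $2$ in the table after \cref{lem:independent(b)}. Having exactly one lift outside $\wt T$ is what produces the $2$, not what removes it. A minimal example: take two weight-one isolated dilated vertices joined by non-dilated edges $a,b$, with $T=\{a\}$. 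The only admissible tree is $\wt T=\{\tilde a^+\}$, and $\mathcal B_{\wt T}=\{2(\tilde a^--\tilde a^+),\,(\tilde a^--\tilde a^+)+(\tilde b^--\tilde b^+)\}$ spans an index-$2$ sublattice of $\Ker\pi_*$. Your elimination therefore stalls on $\tilde a^--\tilde a^+$.

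\textbf{The repair.} Target $\Im(\Id-\iota_*)$ instead. Membership is then immediate, and your count together with the triangular pattern (diagonal entries $1$ or $2$) still gives independence. For generation, do not eliminate coefficients. Use instead that $\Im(\Id-\iota_*)$ is generated by the images of the fundamental cycles of $\wt T$, which form a $\ZZ$-basis of $H_1(\tGa,\ZZ)$. This is the paper's route. Besides the $\beta_e$ themselves, one must express each $\tga_{e_i}^+-\iota\tga_{e_i}^+$ with $e_i\notin T$. The paper shows $\tga_{e_i}^+-\iota\tga_{e_i}^+=-\sum_j\langle\tga_{e_i}^+,\tilde f_j^+\rangle\beta_{f_j}-\beta_{e_i}$, with the signs controlled by how the paths $Q_j$ overlap inside $P_i$.
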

    
    %Particular attention is given to loops of $\Ga$ such that the preimage consists of two parallel edges and we call them $\pi$-parallel loops and define $P_{\pi}$ the set of such edges in \cref{def:piparallelloops}.
    
In Section \ref{section1}, we introduce some definitions and fix the notations. Moreover, we prove some basic properties of harmonic morphisms of weighted metric graphs involving their betti numbers and the weight formula.

In Section \ref{sec:continuity},
we describe the intrinsic construction of the basis for anti-symmetric cycles and prove that the cycles are independent and that generate the lattice defining the continuous Prym variety as integral torus.

In Section \ref{sec:prymcontinuous}, we prove that this choice of basis is continuous under degeneration and  we address the continuity of the Prym--Torelli map.

\subsubsection*{Acknowledgments}
I would like to thank Margarida Melo and Yoav Len for suggesting this problem and for giving me insights on the construction of the intrinsic basis. Thanks also to Felix Röhrle and Dmitry Zakharov for the useful discussions.
The author was partially supported from Deutsche Forschungsgemeinschaft (DFG) through
the Collaborative Research Centre TRR 326 “Geometry and Arithmetic of Uniformized
Structures” (project number 444845124).

\section{Preliminaries}\label{section1}
In this section we recall the notions and definitions to fix the notations and we prove basic properties of harmonic double covers of graphs involving vertices with positive weight. Furthermore, we recall the structures of tropical abelian varieties and  define the extended Prym variety.

\subsection{Harmonic morphisms of weighted metric graphs}
\label{harmonicmorph}
We recall the notion of harmonic morphism of weighted metric graphs by adapting the definitions of \cite{MC} and \cite{Cap14} to this more general case. Suppose $(G,w,l)$ and $(G',w',l')$ are loopless weighted models for metric graphs $\Ga$ and $\Ga'$ then a \emph{morphism of loopless weighted models} $\phi\colon(G,l)\to (G',l')$ is a map of sets 
\[
V(G)\cup E(G)\to V(G')\cup E(G')
\]
such that
\begin{itemize}
    \item[i)] $\phi(V(G))\subseteq V(G')$;
    \item[ii)] if $e=xy$ is an edge of $G$ and $\phi(e)\in V(G')$ then $\phi(x)=\phi(e)=\phi(y)$;
    \item[iii)] if $e=xy$ is an edge of $G$ and $\phi(e)\in E(G')$  then $\phi(e)$ is an edge between $\phi(x)$ and $\phi(y)$;
    \item[iv)] if $\phi(e)=e'$ then $l'(e')/l(e)$ is an integer.
\end{itemize}

We call $\mu_{\phi}(e)=l'(e')/l(e)$ the slope of this map at $e\in E(G)$.

An \emph{indexed morphism} is a morphism  $\phi$ enriched by the assignment,
for every $e\in E(G)$, of a non-negative integer, the  {\it index} of $\phi$ at $e$,
written $r_{\phi}(e)$, 
such that $r_{\phi}(e)=0$ if and only if $\phi(e)$ is a point  and 
it is called {\it simple} if $r_{\phi}(e)\leq 1$ for every $e\in E(G)$.

An indexed morphism is {\it pseudo-harmonic}
if for every $v\in V(G)$ there exists a number, $m_{\phi}(v)$,
such that for every $e'\in E_{\phi_V(v)}(G')$   we have
\begin{equation}
\label{imeq}
m_{\phi}(v)=\sum_{e\in E_v(G): \phi(e)=e'}r_{\phi}(e). 
\end{equation}
\
A pseudo-harmonic morphism   is {\it finite} if $m_{\phi}(v)\geq 1$ for every $v\in V(G)$.

A pseudo-harmonic  morphism is {\it harmonic} if 
 for every $v\in V(G)$ we have, writing  $v'=\phi(v)$,
 \begin{equation}
\label{RH}
\sum_{e\in E_v(G)}(r_{\phi}(e)-1) \leq 2\Bigr(m_{\phi}(v)-1+w(v)-m_{\phi}(v)w'(v')\Bigl).
\end{equation}

Let $\phi:(G,w,l)\to (G',w',l')$ be a pseudo-harmonic   morphism.
Then for every $e'\in E(G')$ and $v'\in V(G')$ we  define the degree of $\phi$ and  the {\it ramification divisor} $R_{\phi}$ as follows 
\begin{equation}\label{degree}
\begin{aligned}
   & \deg \phi = \sum_{e\in E (G): \phi(e)=e'}r_{\phi}(e)= \sum_{v\in \phi^{-1}(v')}m_{\phi}(v), \\
&
R_{\phi}=\sum_{v\in V(G)} \Bigr(2\bigr(m_{\phi}(v)-1+w(v)-m_{\phi}(v)w'(v')\bigl)-\sum_{e\in E_v(G)}(r_{\phi}(e)-1)  \Bigl)v.
\end{aligned}
\end{equation}

\begin{rmk}
    The degree $\deg \phi$ does not depend on the choice of $e'\in E(G')$ (extension of the proof of Lemma 2.4, \cite{BN09}).
\end{rmk}

Finally, we define a  \emph{morphism of weighted metric graphs} $(\Gamma,w)$ and $(\Gamma',w')$  to be a continuous map $\widetilde\phi\colon \Ga\to \Ga'$ which is induced from a morphism $\phi\colon (G,w,l)\to (G',w',l')$ of any loopless weighted models with slope equal to $r_{\phi}(e)$ and $\deg\widetilde\phi=\deg\phi$,
 and we write either $\Gamma\to \Gamma'$ or $(\Gamma,w)\to (\Gamma',w')$ depending on the situation. We will simply denote by $\phi$ the morphism of weighted metric graphs. 
 For a point $x\in \Ga$, we define 
 \[m_\phi(x)=\begin{cases}
     m_{\phi}(v) & \text{if}\,\,x=v\in V(\Ga),\\
     r_{\phi}(e)& \text{if}\,\,x\in e\in E(\Ga)
 \end{cases}\]
 the \emph{multiplicity} of $\phi$ in $x$.
 The morphism $\widetilde\phi$ is \emph{harmonic} if $\phi$ is harmonic.

An \emph{isomorphism} between weighted metric graphs is an isometry which does not depend on the choice of the model and respects the weight function.% An \emph{isomorphism}  of double covers is an isomorphism $\phi\colon\Gamma_1\to \Gamma_2$ such that the diagram\[\begin{tikzcd}\Gamma_1 \arrow[r, "\phi"] \arrow[d, "\pi_1"] & \Gamma_2 \arrow[d, "\pi_2"] \\\Gamma \arrow[r, "Id"]                        & \Gamma                     \end{tikzcd}\]commutes.

%\begin{defn}\label{def:injectivity} Let $\phi:\Gamma\to\Gamma'$ be a morphism of metric graphs.  We say that $\phi$ is \emph{globally injective} at a point $p$ if it is the only point of $\Gamma$ mapping to $\phi(p)$. It is globally injective at a subgraph $\Gamma_0$  if it is globally injective at every point of the subgraph (namely, if $p$ is in $\Gamma_0$ and $q$ is any point in $\Gamma\setminus\{p\}$, then $\phi(q)\neq \phi(p)$). \end{defn}

%CONTRACTION BRIDGES
%\begin{rmk}    As in \cite[Examples 3.1 and 3.6]{BN07}, contracting bridges is \text{not} harmonic in general. These examples on combinatorial graphs can be immediately extended to the metric case. It follows from the definition of horizontal multiplicity that, if a bridge $e$, such that $\Gamma\setminus e$ consists of two metric graphs $\Ga_1$ and $\Ga_2$ of positive genera, gets contracted, then the morphism is not harmonic, because there is at least one edge $e'$ adjacent to $\phi(x)$ with no preimages adjacent to $x$ since there was the bridge $e$ in between. An example of a harmonic morphism that contracts bridges is shown in \cref{fig:bridges}, instead a non harmonic morphism is the map $\Psi$ in Figure \ref{fig:counterexampleofdeg2}.\end{rmk}

%\begin{ex}	\giusi{Example of a non harmonic morphism.}\end{ex}
As in \cite{Cap14}, and extending the result to metric graphs, given a pseudo-harmonic morphism $\phi\colon(\Ga,w)\to(\Ga',w')$, the \emph{pullback} map on divisors is defined as follows
\[
(\phi^*(D'))(x)=m_{\phi}(x)\cdot D'(\phi(x)),
\]
for all $x\in \Ga$. Moreover,  for every $D\in \Div(\Ga)$,
\begin{equation}\label{inequalityrank}
    \rr_{\Ga'}(\phi_*(D))\geq \rr_{\Ga}(D),
\end{equation}
which is a trivial extension of the proof of
\cite[Corollary 4.11]{BN09}. We recall the Riemann--Hurwitz formula for metric graphs that relates the canonical divisors of $(\Ga,w)$ and $(\Ga',w')$. This is an extension of the proof of \cite[Prop. 2.5]{Cap14} to the metric case.

By choosing a model of $\Ga$, say $(G,l)$, for every $v\in V(G)$  the \emph{canonical divisor} is defined as $K_{(\Ga,w)}(v):=2w(v)-2+\mathrm{val}(v)$,

 \begin{prop}[Riemann--Hurwitz]\label{RHweighted}
Let $\phi:(\Ga,w)\to (\Ga',w')$ be a pseudo-harmonic morphism of weighted metric graphs and let $(G,l)$ and $(G',l')$ be two models of genus $g$ and $g'$ respectively.
Then
\begin{equation*}
\label{RHeq}
 K_{(G,w)}=\phi^*K_{(G',w')}+R_{\phi}.
\end{equation*}
  The morphism $\phi$ is harmonic if and only if $R_{\phi}\geq 0$ (equivalently  $2g -2\geq \deg \phi(2g'-2)$).
\end{prop}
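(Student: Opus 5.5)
The proof is a vertex-by-vertex computation in a common model; the global statements then follow by summing. First I refine the models so that $\phi$ is a morphism of loopless weighted models $(G,w,l)\to(G',w',l')$, which is possible because subdividing edges does not change the canonical divisor at the new $2$-valent weight-zero points. The identity $K_{(G,w)}=\phi^*K_{(G',w')}+R_{\phi}$ is an equality of divisors on $V(G)$. It therefore suffices to fix $v\in V(G)$ with $v'=\phi(v)$ and check the coefficients there:
\[
2w(v)-2+\mathrm{val}(v) = m_\phi(v)\bigl(2w'(v')-2+\mathrm{val}(v')\bigr) + R_\phi(v).
\]

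The key input is pseudo-harmonicity. For each $e'\in E_{v'}(G')$ we have $\sum_{e\in E_v(G),\,\phi(e)=e'} r_\phi(e)=m_\phi(v)$. Summing over the $\mathrm{val}(v')$ edges at $v'$ gives
\[
\sum_{e\in E_v(G)} r_\phi(e)=m_\phi(v)\,\mathrm{val}(v'),
\]
where the edges contracted by $\phi$ contribute $r_\phi(e)=0$. Hence
\[
\sum_{e\in E_v(G)}\bigl(r_\phi(e)-1\bigr)=m_\phi(v)\,\mathrm{val}(v')-\mathrm{val}(v).
\]
Substituting this into the definition of $R_\phi(v)$ gives
\[
R_\phi(v)=2m_\phi(v)-2+2w(v)-2m_\phi(v)w'(v')-m_\phi(v)\mathrm{val}(v')+\mathrm{val}(v).
\]
Adding $m_\phi(v)\bigl(2w'(v')-2+\mathrm{val}(v')\bigr)$ to this returns exactly $2w(v)-2+\mathrm{val}(v)$. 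So the pointwise identity is pure algebra once the summed pseudo-harmonicity relation is in place. The one point needing care is that contracted edges, and edges at $v$ lying over edges at $v'$, are counted correctly. This is where I use conditions (ii) and (iii) of a morphism of models: every edge at $v$ either maps to a vertex or maps to an edge at $v'$.

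For the second statement, harmonicity is by definition the inequality \eqref{RH} at every vertex, which is exactly $R_\phi(v)\ge 0$. So $\phi$ is harmonic if and only if $R_\phi\geq0$.

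For the parenthetical reformulation, I take degrees of the identity. Let $g$ denote the genus of $(G,w)$, namely $b_1(G)+\sum_v w(v)$. The handshake lemma gives $\deg K_{(G,w)}=2g-2$. By \eqref{degree}, $\sum_{v\in\phi^{-1}(v')}m_\phi(v)=\deg\phi$, so $\deg\phi^*D'=\deg\phi\cdot\deg D'$ and
\[
2g-2=\deg\phi\,(2g'-2)+\deg R_\phi.
\]
Thus $R_\phi\ge0$ implies $2g-2\ge \deg\phi(2g'-2)$. The converse can only hold in a weaker sense: a non-negative total degree does not force every coefficient $R_\phi(v)$ to be non-negative. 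I would therefore state it as the numerical consequence, namely the global Riemann--Hurwitz inequality. To get an actual equivalence I would restrict to settings where $R_\phi$ is concentrated at a single vertex. Making the ``equivalently'' precise is the step I expect to be the main obstacle. The rest is routine bookkeeping, plus the model-independence of the identity under subdivision.
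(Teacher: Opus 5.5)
Your proof is correct and follows the argument the paper relies on. The paper gives no proof of its own and only cites \cite[Prop.~2.5]{Cap14}, whose proof is this same vertex-by-vertex computation: sum pseudo-harmonicity over $E_{v'}(G')$, note that harmonicity $\Leftrightarrow R_\phi\geq 0$ holds by definition, and get the degree inequality from the handshake lemma.

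You are also right that the parenthetical ``equivalently'' holds only as the implication $R_\phi\geq 0\Rightarrow 2g-2\geq\deg\phi(2g'-2)$, so there is nothing further to prove there. For a counterexample to the converse, take the map of a single edge $uv$ onto a single edge $u'v'$ with index $r_\phi=2$ and weights $w(u)=w'(u')=1$, $w(v)=w'(v')=0$. It is pseudo-harmonic with $m_\phi(u)=m_\phi(v)=\deg\phi=2$ and $g=g'=1$, so $2g-2=\deg\phi(2g'-2)$, yet $R_\phi(u)=-1$.
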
 

    \subsection{Properties of harmonic double covers}
   
In what follows, we  consider harmonic double covers of (possibly) weighted metric graphs as introduced above. We will often abuse the notation and write $V(\Ga)$ and $E(\Ga)$ instead of $V(G)$ and $E(G)$, for some model $(G,l)$ of the  metric graph $\Ga$. For weighted harmonic morphisms, we will sometimes drop the function $w$ and just write $\phi\colon \Ga\to \Ga'$, but every metric graph is assumed to be weighted.

\begin{defn}
An \emph{unramified harmonic} morphism $\phi\colon \Ga\to \Ga'$ is a harmonic morphism such that the ramification divisor $R_{\phi}$ as in Equation \ref{degree}, is $R_{\phi}=0$. Therefore, for all $v\in V(\Ga)$, if we denote by $v'=\phi(v)$, then the following equality holds.
\begin{equation}
\label{RH1}
\sum_{e\in E_v(\Ga)}(r_{\phi}(e)-1) = 2\Bigr(m_{\phi}(v)-1+w(v)-m_{\phi}(v)w'(v')\Bigl).
\end{equation}

    A \emph{double cover}  $\pi\colon \widetilde\Ga\to \Ga$ is an unramified non-simple harmonic morphisms of degree $2$.
\end{defn}

\begin{notation}
     For \emph{double} covers, 
  since the index $r_{\pi}(e)$ is $1$ or $2$ depending on whether the edge is dilated or not, we will denote by $e_d(\tilde v)$ the number of dilated edges adjacent to $\tilde v$,
        so that we have \[e_d(\tilde v):=\sum_{e\in E_{\tilde v}(\wt G)}(r_{\pi}(e)-1),\]
        where $E_{\tilde v}(\wt G)$ is the set of edges adjacent to the vertex $\tilde v$ in $\tGa$.
\end{notation}

%\begin{rmk}  The genus $\tilde{g}$ of the cover graph $\widetilde\Ga$ is given by $\tilde{g}=2(g-1)+1=2g-1$.\end{rmk}

\begin{defn}
    Let $\pi\colon (\tGa,\tilde w)\to (\Gamma,w)$ be a harmonic morphism of weighted metric graphs of degree $2$. We define the \emph{dilation cycle}  \[\Ga_{\dil}:=\{x\in \Ga:|\pi^{-1}(x)|=1\},\]
     the \emph{dilation index}  is
\[
d(\tGa/\Ga)=\begin{cases} \mbox{number of connected components of }\Ga_{\dil},& \mbox{if $\pi$ is dilated,}\\
1, & \mbox{if $\pi$ is free.}
\end{cases}
\]

\end{defn}

\begin{rmk}\label{structureofthedilsubgraph}
    The set $\Ga_{\dil}$ is a subgraph of $\Ga$ which is the union of isolated vertices and even subgraphs.
    Indeed, by continuity of the morphism, the vertices adjacent to dilated edges are dilated and by harmonicity we have that $val(\tilde v)=2k$ for all the vertices $\tilde v\in\pi^{-1}(\Ga_{\dil})$.
    The latter follows from the formula $e_d(\tilde v)=\sum_{e\in E_{\tilde v}(\widetilde\Ga)}(r_{\pi}(e)-1) = 2\Bigr(m_{\pi}(\tilde v)-1+\tilde w(\tilde v)-m_{\pi}(\tilde v)w(v)\Bigl)$.    
    Therefore, it is a well-defined disconnected subgraph of $\Ga$ and the vertices $v\in \Ga_{\dil}$ such that $e_d(\pi^{-1}(v))=0$ are precisely the isolated ones.
  
\end{rmk}

\begin{lem}\label{lem:weightsformula} 
   Let $\pi\colon \widetilde{\Gamma}\to \Gamma$ be a harmonic double cover of  metric graphs with weight functions $\tilde w$ and $w$ respectively. Let $ v\in V(\Gamma)$, then 

    \begin{itemize}
        \item if $ v\not \in \Gamma_{\dil}$,  $\tilde w(\tilde v^+)=\tilde w(\tilde v^-)=w(v)$, 
        \item if $v \in \Gamma_{\dil}$,  $\tilde w(\tilde v)=2w(v)-1+\frac{e_d(\tilde v)}{2}$.
    \end{itemize}
   
    \begin{proof}
        It follows from the harmonicity formula (\ref{RH}).
        %harmonicity formula: \begin{equation*}  \sum_{e\in E_v(G)}(r_{\phi}(e)-1) = 2\Bigr(m_{\phi}(v)-1+w(v)-m_{\phi}(v)w'(v')\Bigl). \end{equation*}
        Indeed,  
        Let $ v$ be a non dilated vertex, and let $\tilde v^+$ be one of the two preimages (the case $\tilde v^-$ is analogous), then $e_d(\tilde v^+)=0$ and the multiplicity $m_{\pi}(\tilde v^+)$ is equal to $1$. Therefore, $0=2(\tilde w(\tilde v^+)-w(v))$. If $  v\in\Gamma_{\dil}$ then $\tilde v$ is the only preimage and
        the multiplicity $m_{\pi}(\tilde v)$ is $2$,  we get
        \begin{equation*}
            \frac{e_d(\tilde v)}{2}=1+\tilde w(\tilde v)-2w(v).
        \end{equation*}
    \end{proof}
\end{lem}
\begin{rmk}\label{positiveweight}
    By \cref{lem:weightsformula}, if $ v\in \Ga_{\dil}$ is an isolated vertex, then $\tilde w(\tilde v)=2w(v)-1$, therefore
isolated dilated vertices must have positive weight.
\end{rmk}
\begin{rmk}\label{rmk:relationbetweenBettinum}
    Note that for double covers of weighted graphs, the genus  of a  graph $G$  is given by $g(G)=b_1(G)+|w|$. As a consequence of Lemma \ref{lem:weightsformula},  different choices of dilation subgraphs produce graphs $\tGa$ with different first Betti numbers. In contrast with the case of free double covers, the first Betti number is therefore not constant, but depends on the dilation cycle. Indeed, let $\pi\colon \widetilde{\Gamma}\to \Gamma$ be a double cover, then \[b_1(\wt G)+|\tilde w|=\tilde g=2g-1=2(b_1(G)+|w|)-1,\]
    Therefore $b_1(\wt G)-b_1(G)=b_1(G)+2|w|-|\tilde w|-1$, and
    by \cref{lem:weightsformula} and the Handshaking lemma, we obtain
    \begin{equation}\label{eq:relationbettinumbers}
        b_1(\wt G)-b_1(G)=b_1(G)+n_d-m_d-1,
    \end{equation}
    
    where $n_d$ denotes the number of dilated vertices and $m_d$ the number of dilated edges of $\Ga$. By calling $b_1(\Ga_{\dil})$ and $c(\Ga_{\dil})$ the connected components of the dilated subgraph, and by  $W_{\pi}$ the set of isolated vertices, we could rewrite  $ b_1(\wt G)-b_1(G)=b_1(G)-1+\#W_{\pi}-b_1(\Ga_{\dil})+c(\Ga_{\dil})$. %Alternatively, it corresponds to $\#E_{non dil}-\#V_{non dil}$.
\end{rmk}
\begin{ex}
Figure \ref{dilatedweighted} is an example of a dilated and weighted double cover of a genus $7$ graph with a dilated isolated vertex $v_2,$ a non-dilated weighted vertex $v_1$,  and a dilated subgraph of genus $3$ on the right.
The weight function on the graph $\tGa$ is given by \cref{lem:weightsformula}. Moreover, $n_d=3$ and $m_d=2$ and we can check that $b_1(\tGa)=4=2b_1(\Ga)+n_d-m_d-1$, as prescribed by \cref{eq:relationbettinumbers}.
\begin{figure}[H]
    \centering
\begin{tikzpicture}
\coordinate (-1) at (-2,0);
\coordinate (-1a) at (-2,2);
\coordinate (-1b) at (-2,3);
\coordinate (0) at (-1,0);
\coordinate (0a) at (-1,2.5);

\coordinate (1) at (0,0); %leftvertex
 \coordinate (a1) at (0,2); %preimage - of the vertex (1)
\coordinate (b1) at (0,3); %preimage + of the vertex (1)
\coordinate (2) at (1,0); %rightvertex
 \coordinate (a2) at (1,2); %preimage - of the vertex (2)
 \coordinate (b2) at (1,3); %preimage + of the vertex (2)
 
 \coordinate (3) at (2,0);
 \coordinate (a3) at (2,2.5);

 \coordinate (4) at (3,0);
  \coordinate (a4) at (3,2.5);
  
 \coordinate (c1) at (0.4,2.4); 
 \coordinate (c2) at (0.6,2.6);
 %contruction of the edge in the middle: c1 and c2 are the middle points so that the segment is cutted in half

 \foreach \i in {2,1,a2,a1,b2,b1}
 \draw[fill=black](\i) circle (0.15em);
 \draw[black,fill=black](3) circle (0.25em);
 \draw[black,fill=black] (a3) circle (0.25em);
\draw[line width=1.5pt,black](0) circle (0.50em);
\draw[line width=1.5pt,black](0a) circle (0.50em);
\draw[line width=1.5pt,black](4) circle (0.50em);
\draw[line width=1.5pt,black](a4) circle (0.50em);
\draw[thin,black](-1) circle (0.50em);
\draw[thin,black](-1a) circle (0.50em);
\draw[thin,black](-1b) circle (0.50em);

\draw[thin, black] (-0.8,0)--(1);
\draw[thin, black] (-1.2,0)--(-1.8,0);
 \draw[thin, black] (1) to [out=90, in=90] (2);
\draw[thin, black] (1) to [out=90+180, in=-90] (2);

\draw[line width=1.5pt, black] (3) to [out=45, in=90+15] (2.8,0);
\draw[line width=1.5pt, black] (3) to [out=-45, in=-105] (2.8,0);

 \draw[line width=1.5pt, black] (a3) to [out=45, in=90+15] (2.8,2.5);
\draw[line width=1.5pt, black] (a3) to [out=-45, in=-105] (2.8,2.5);
 
\draw[thin, black] (b1) to [out=90, in=90] (b2);
\draw[thin, black] (a1) to [out=90+180, in=-90] (a2);
\draw[thin, black] (a1)--(-0.8,2.5);
\draw[thin, black] (b1)--(-0.8,2.5);
\draw[thin, black] (-1.8,3)--(-1.2,2.5);
\draw[thin, black] (-1.8,2)--(-1.2,2.5);
  \draw[thin, black] (b1)--(a2);
\draw[thin, black] (a1)--(c1);
    \draw[thin, black] (a3)--(a2);
    \draw[thin, black] (a3)--(b2);
    \draw[thin, black] (2)--(3);
    \draw[thin, black] (b2)--(c2);
  \draw (0.5,1.3) node [below] {$\downarrow$};
   \draw (-2,-0.2) node [below] {$v_1$};
    \draw (-1) node  {$2$};
    \draw (-1a) node  {$2$};
     \draw (-1b) node  {$2$};
    \draw (0) node  {$1$};
    \draw (-1,-0.2) node [below]  {$v_2$};
     \draw (0a) node  {$1$};
  \draw (4) node  {$2$};
   \draw (a4) node  {$4$};

  \end{tikzpicture}
 \caption{A dilated double cover with dilated and non-dilated vertices of positive weight. The thickness represents respectively the dilation cycles $\widetilde\Gamma_{\dil}\subset \tGa$ and $\Ga_{\dil}\subset \Ga$ made of an isolated vertex and a genus $3$ even subgraph.}
    \label{dilatedweighted}
\end{figure}
\end{ex}

\subsection{Tropical abelian varieties}

We quickly remind the reader about the structure of tropical abelian varieties. See \cite[Section $2$]{LZ22} and \cite[Section $4$]{RZ_ngonal} for more details. Let $\Lambda$ and $\Lambda'$ be finitely generated free abelian groups of the same rank and let $[\cdot, \cdot] : \Lambda \times \Lambda' \to \R$ be a non-degenerate pairing. The triple $(\Lambda, \Lambda', [\cdot, \cdot])$ defines a \emph{real torus with integral structure} (or simply an \emph{integral torus})  
\[
\Si=\Hom(\Lambda, \R) / \Lambda',
\]
where the inclusion $\Lambda' \subseteq \Hom(\Lambda, \R)$ is given by $\lambda' \mapsto [\cdot,\lambda']$.
The \emph{dimension} of an integral torus is $\dim_{\R}\Si=\rk \La=\rk \La'$. 
A \emph{polarisation} on $\Sigma$ is a group homomorphism $\xi : \Lambda' \to \Lambda$ such that 
\[
(\cdot,\cdot) = [\xi(\cdot), \cdot] \colon \Lambda'_\R \times \Lambda'_\R \to \R
\]
is a symmetric and positive definite bilinear form. A polarisation is necessarily injective, and is called \emph{principal} if it is also bijective.

%The invariant factors $(a_1,\ldots,a_g)$ of the Smith normal form (where $a_i\geq 1$ and $a_i|a_{i+1}$ for $i=1,\ldots,g-1$) define the \emph{type} of a polarisation $\zeta$, and a polarisation is principal if and only if all $a_i=1$. 

A polarisation defines an isogeny $(\zeta, \zeta) : \Sigma \to \Sigma^\vee$ to the dual, which is an isomorphism if and only if the polarisation is principal.

Let $f : \Sigma_1 \to \Sigma_2$ be a finite homomorphism of integral tori. Given a polarisation $\zeta_2$ on $\Si_2$, we define a polarisation \[f^*(\zeta_2)=f^\# \circ \zeta_2 \circ f_\#\] on $\Sigma_1$, called the \emph{induced polarisation}. We note that a polarisation induced from a principal one could be non-principal.

An integral torus together with a principal polarisation is called \emph{principally polarised tropical abelian variety} or \emph{pptav} for short.

A \emph{homomorphism of integral tori} $f=(f^{\#},f_{\#})\colon (\Lambda_1,\Lambda_1',[\cdot,\cdot]_1)\to (\Lambda_2,\Lambda_2',[\cdot,\cdot]_2)$ consists of a pair of homomorphisms $f^{\#}\colon \Lambda_2\to \Lambda_1$
and $f_{\#}\colon \Lambda_1'\to \Lambda_2'$
satisfying the relation 
\[
[f^{\#}(\lambda_2),\lambda_1']_1=[\lambda_2,f_{\#}(\lambda_1')]_2
\]
for all $\lambda_1'\in\Lambda_1'$ and $\lambda_2\in \Lambda_2$.
A homomorphism $f=(f^{\#},f_{\#})$ is an \emph{isomorphism} if $f_{\#}$ and $f^{\#}$
are isomorphisms.

The quintessential example of a pptav is the  \emph{Jacobian} of a metric graph $\Ga$, given by
\[
\Si_1=\mathrm{Jac}(\Ga)=(\Omega^1(\Ga),H_1(\Ga,\ZZ),[\cdot,\cdot]_1),
\] 
where the pairing $[\cdot,\cdot]_1$ is given by integration along cycles and the principal polarisation is natural map identifying  $H_1(\Gamma,\ZZ)$ with $\Omega^1(\Gamma)$. %Note that some sources use $\Omega(\wt\Gamma,\ZZ)$ rather than $H^1(\wt\Gamma,\ZZ)$ but we don't require that perspective here. 

%The \emph{Jacobian} of a metric graph $\Ga$ is the pptav \[\Si_1=\mathrm{Jac}(\Ga)=(\Omega(\Ga,\ZZ)),H_1(\Ga,\ZZ)),[\cdot,\cdot]_1),\] of dimension $g_{\Ga}$, where the pairing $[\cdot,\cdot]_1$ is described as follows.Let $G$ be an oriented model of $\Ga$. Then the simplicial chain group $C_1(G,\mathbb Z)$ is the free abelian group on the edges of $G$ containing the simplicial homology group $H_1(G,\mathbb Z)$ and we call $C_1(\Ga,\mathbb Z)$ and $H_1(\Ga,\mathbb Z)$ the direct limits. There is a natural isomorphism $H_1(\Ga,\mathbb Z)\to \Omega^1_{\Ga}$ sending a cycle $\sum a_e e$ to the $1-$form $\sum a_e\, de$.The \emph{integration pairing} \[\begin{tabular}{cccc}	    $[\cdot,\cdot]\colon$&$ \Omega_{\Ga}^1(\Ga)\times C_1(\Ga,\mathbb Z) $ & $\longrightarrow$&$\mathbb R$  \\	&$(\omega,\gamma)=\bigl(\sum a_ede,\sum b_ee\bigr)$&$\mapsto$&$\int_{\gamma}\omega=\sum a_eb_el(e)$ \\\end{tabular}\]restricts to a non-degenerate pairing $[\cdot,\cdot]_1\colon\Omega_{\Ga}^1(\Ga)\times H_1(\Ga,\mathbb Z)\to \mathbb R$. 

We recall the extended definition of the Jacobian of a weighted metric graph, introduced in \cite[Section 5]{BrannMeloViviani}. Let $\Ga_w$ be a graph of genus $g=b_1(\Ga)+|w|$, then the Jacobian is the pptav 
\[
\Si_1=\mathrm{Jac}(\Ga_w)=(\overline{\Omega^1(\Ga_w))},\overline{H_1(\Ga_w,\ZZ))},\overline{[\cdot,\cdot]}_1),
\] 
Where  we denote by $\overline{\Omega^1(\Ga_w))}$ and $\overline{H_1(\Ga_w,\ZZ))}$
 the two lattices of rank $g$ obtained by adding $ |w|$ dimensions to the two lattices of rank $b_1(\Ga_w)$.  More precisely,
\[
\overline{\Omega^1(\Ga_w)}=\Omega^1(\Ga_w)\oplus\ZZ^{|w|}
\]
and
\[
\overline{H_1(\Ga_w,\ZZ)}=H_1(\Ga_w,\ZZ)\oplus \ZZ^{|w|},
\]
the extended lattices.
The integration pairing $\overline{[\cdot,\cdot]}_1$ extends $[\cdot,\cdot]_1$ to $0$ for the generators of $\ZZ^{|w|}$.

\subsubsection{Prym varieties}
The second most studied tropical abelian variety is the  Prym variety associated to a double cover $\pi\colon \tGa \to \Ga$. % given by\[ \Sigma_2=\Prym(\pi)=((\text{Coker}\,\pi^*)^{tf},\mathrm{Ker}\,\pi_*,[\cdot,\cdot]_2).\]
%Here, $\pi^*$ and $\pi_*$ are the pullback and pushforward on cohomology and homology respectively, and $(\text{Coker}\,\pi^*)^{tf}$ is the quotient of $(\text{Coker}\,\pi^*)$ by its torsion subgroup. The most convenient way of thinking of $\mathrm{Ker}\,\pi_*$ is as anti-symmetric cycles in $\wt\Gamma$, namely oriented cycles that flip sign under the action of the involution. The pairing $[\cdot,\cdot]_2$ is induced by the integration pairing on $\Jac(\tGa)$ and the principal polarisation is obtained from the principal polarisation on the Jacobian  divided by 2. The Prym variety is a pptav of dimension $g-1$. 
Recently, in \cite{RZ_ngonal} the authors have introduced another definition of the Prym variety, called 
   the \emph{continuous} Prym, that is slightly different from the previous one which is now referred to as the \emph{divisorial} Prym variety.
We will  recall the structures of both as integral tori.

The double cover $\pi\colon\tGa\to\Ga$ and the involution $\iota\colon\tGa\to\tGa$ induce the following maps 
\begin{equation}
	\begin{aligned}
		&\pi^\ast :& \Omega^1(\Ga) &\longrightarrow \Omega^1(\tGa) \\
		&&\sum a_e de &\longmapsto \sum a_e(d \tilde e^+ + d \tilde e^-)\\[1em]
		&\pi_\ast :& H_1(\wt\Gamma, \ZZ) &\longrightarrow H_1(\Gamma, \ZZ) \\
		&&\sum a_{\tilde e^{\pm}}\tilde e^{\pm} &\longmapsto\sum (a_{\tilde e^+} + a_{\tilde e^-}) e \\
	\end{aligned}
	\qquad \text{and} \qquad
	\begin{aligned}
		&\iota^*:&\Omega^1(\tGa) & \longrightarrow  \Omega^1(\tGa) \\
		&&\sum a_{\tilde e^{\pm}}d\tilde e^{\pm} &\longmapsto\sum a_{\tilde e^{\pm}}d\tilde e^{\mp}\\[1em]
		&\iota_*:&H_1(\wt \Gamma, \ZZ) & \longrightarrow H_1(\wt \Gamma, \ZZ)\\
		&&\sum a_{\tilde e^{\pm}}\tilde e^{\pm} &\longmapsto\sum a_{\tilde e^{\pm}}\tilde e^{\mp}\\
	\end{aligned}
	\label{eq:pushpull}
\end{equation}
The pair of maps $(\pi^*,\pi_*)$ defines the \emph{norm map}  $\pi: \Jac(\wt \Gamma) \to \Jac(\Gamma)$ which  is the pushforward map on divisors of degree zero under the identifications $\Pic_0(\tGa)=\Jac(\tGa)$ and $\Pic_0(\Ga)=\Jac(\Ga)$. The involution $\iota:\tGa\to \tGa$ induces an involution $\iota=(\iota^*,\iota_*):\Jac(\tGa)\to \Jac(\tGa)$, and the composition $\pi\circ (\Id-\iota):\Jac(\tGa)\to \Jac(\Ga)$ is the zero map.

 The \emph{divisorial Prym variety} of the double cover $\pi:\tGa\to\Ga$ is the connected component of the identity of the kernel of $\pi$:
	\begin{equation}
		\Prym_d(\pi) = (\mathrm{Ker} \,\pi)_0= \Big( (\mathrm{Coker} \pi^*)^\mathrm{tf}, \quad \mathrm{Ker}\, \pi_*, \quad [\cdot, \cdot]_P \Big).
	\end{equation}
	The \emph{continuous Prym variety} of the double cover $\pi:\tGa\to\Ga$ is the integral torus:
	\begin{equation}
		\Prym_c(\pi)=\Big( \Omega^1(\tGa)/\mathrm{Ker} (\Id-\iota^*), \quad \mathrm{Im} (\Id-\iota_*), \quad [\cdot, \cdot]_P \Big).
		\label{eq:Prymc}
	\end{equation}
	The pairing $[\cdot,\cdot]_P$ on both tori is induced by the integration pairing on $\Jac(\tGa)$. 
	The dimension of the Prym variety  is equal to $g_0=b_1(\tGa)-b_1(\Ga)$.

We refer to \cite[Construction B]{LZ22}   for the construction of the basis for cycles for free double covers.

Let us remind the construction in the dilated case following  \cite[Proposition 4.20]{RZ_ngonal}. We will change the notation from $g(\Ga)$ to $b_1(\Ga)$ since the authors assume $|w|=0$ throughout the paper. Let us recall the following invariants

\begin{equation*} 
	\begin{aligned}
		A &= b_1(\Ga)-m_{\dil}+n_{\dil}-d(\tGa/\Ga), \\
		B &= d(\tGa/\Ga)-1, \\
		C &= m_{\dil}-n_{\dil}+d(\tGa/\Ga).
	\end{aligned} 
\end{equation*}
It follows from \cref{eq:relationbettinumbers} that
$
A+B=b_1(\Ga)-m_{\dil}+n_{\dil}-1=b_1(\tGa)-b_1(\Ga).
$

%\begin{prop} \label{prop:dilatedbasis} Let $\pi:\tGa\to\Ga$ be a dilated double cover. Then there exists a basis $\al_1,\ldots,\al_A$, $\ga_1,\ldots,\ga_C$	of $H_1(\Ga,\ZZ)$ and a basis $\tal^{\pm}_1,\ldots,\tal^{\pm}_A$,  $\tbe_1,\ldots,\tbe_B$, $\tga_1,\ldots,\tga_C$	of $H_1(\tGa,\ZZ)$, such that	\begin{align*}		\iota_*(\tal^{\pm}_i) &= \tal^{\mp}_i,		& \pi_*(\tal^{\pm}_i) &= \al_i,		& \pi^*(\al_i) &= \tal^+_i+\tal^-_i,		& i&=1,\ldots,A, \\		\iota_*(\tbe_j) &= -\tbe_j,		& \pi_*(\tbe_j) &= 0,		&&& j&=1,\ldots,B,\\		\iota_*(\tga_k) &= \tga_k,		& \pi_*(\tga_k) &= \ga_k,		& \pi^*(\ga_k) &= 2\tga_k,		& k&=1,\ldots,C,\end{align*}	where A,B and C as before.\end{prop}
In order to give a basis, the authors first contract the dilation subgraph $\Ga_{\dil}$ into isolated dilated vertices $v_0',\dots,v'_{d-1}$ resulting a double cover $\tGa'\to \Ga'$, where $d$ is the number of connected components of $\Ga_{\dil}$, and then consider a second double cover $\wt \Ga''\to\Ga''$ obtained by replacing the dilated vertices with loops and covering the loops with two parallel edges. In this way, given a spanning tree $T''$ of $\Ga''$, the preimage $\pi^{-1}(T'')$ is always disconnected and the authors use the same insights as in the free case to construct the basis.

In the free case
 $\Prym_c(\pi)=\Prym_d(\pi)$ and the dimension is $g_0=b_1(\Ga)-1$.
%Furthermore, $\Im(\Id-\iota_*)$ is equal to $\Ker \pi_*$ and is already saturated in $H_1(\tGa,\ZZ)$, hence $\Prym_d(\pi)$ is the group-theoretic image of $\Id-\iota$.  Moreover, the number of connected components of the group-theoretic kernel of $\pi_*$ is the index of $\pi_* (H_1(\tGa,\ZZ) )$ in its saturation $\big(\pi_* (H_1(\tGa,\ZZ) ) \big)^{\sat}$ in $H_1(\Ga,\ZZ)$, and Proposition~\ref{prop:freebasis} shows that\[\big(\pi_* (H_1(\tGa,\ZZ) )\big)^{\sat} = H_1(\Ga,\ZZ),\qquad \big[H_1(\Ga,\ZZ):\pi_*(H_1(\tGa,\ZZ))\big]=2.\]Finally, the principal polarisation $d:H_1(\tGa,\ZZ)\to \Omega^1_{\tGa}(\tGa)$ induces the polarisation\begin{align*}    \zeta: \big\langle\tal^+_1-\tal^-_1,\ldots,\tal^+_{g-1}-\tal^-_{g-1} \big\rangle &\longrightarrow \big\langle[d\tal^+_1],\ldots,[d\tal^+_{g-1}] \big\rangle,\\    \tal^+_i-\tal^-_i &\longmapsto 2[d\tal^+_i]\end{align*}on $\Prym_c(\pi)=\Prym_d(\pi)$, which is twice the principal polarisation $\zeta_c:\tal^+_i-\tal^-_i\mapsto[d\tal^+_i]$.
For a dilated double cover $\pi:\tGa\to \Ga$ with dilation index $d(\tGa/\Ga)$, instead, %we have\begin{align*}\Ker (\Id-\iota^*) &= \big\langle d\tal^+_1+d\tal^-_1,\ldots,d\tal^+_A+d\tal^-_A,d\tga_1,\ldots,d\tga_C \big\rangle, \\\Im \pi^* &= \big\langle d\tal^+_1+d\tal^-_1,\ldots,d\tal^+_A+d\tal^-_A,2d\tga_1,\ldots,2d\tga_C \big\rangle,\end{align*}
 the first lattices are the same, while
the second lattices are distinct. Indeed, the second lattice of $\Prym_c(\pi)$ is
\begin{equation}
\label{eq:Prymcbasis2}
\Im (\Id-\iota_*)= \big\langle \tal^+_1-\tal^-_1,\ldots,\tal^+_A-\tal^-_A,2\tbe_1,\ldots,2\tbe_B \big\rangle,
\end{equation}
while the second lattice of $\Prym_d(\pi)$ is
\[
\Ker \pi_*=(\Im (\Id-\iota_*))^{\mathrm{sat}}= \big\langle \tal^+_1-\tal^-_1,\ldots,\tal^+_A-\tal^-_A,\tbe_1,\ldots,\tbe_B \big\rangle.
\]
Hence, $\Prym_d(\pi)$ is the image of $\Id-\iota$, and the natural map $\Prym_c(\pi)\to \Prym_d(\pi)$ is a free isogeny with geometric degree equal to the index of $\Im (\Id-\iota_*)$ in $\Ker \pi_*$, which is equal to $2^B=2^{d(\tGa/\Ga)-1}$.

%In the dilated case the map $\pi_*$ is surjective, therefore $\pi_*(H_1(\tGa,\ZZ))$ is saturated in its image and the group-theoretic kernel of $\pi$ has a single connected component.  On $\Prym_d(\pi)$, the induced polarisation $\Ker \pi_*\to (\Coker \pi^*)^{\mathrm{tf}}$ sends $\tal^+_i-\tal^-_i$ to $2[d\tal^+_i]$ and $\beta_j$ to $[d\beta_j]$, hence has type $(1^B,2^A)$. On the other hand, the induced polarisation $\zeta:\Im(\Id-\iota_*)\to \Omega^1_{\tGa}(\tGa) / \Ker(\Id - \iota^\ast)$ on $\Prym_c(\pi)$ is equal to $\zeta=2\zeta_c$, where $\zeta_c$ is the principal polarization\begin{equation}   \begin{aligned}    \zeta_c:\Im(\Id-\iota_*) &\longrightarrow \Omega^1_{\tGa}(\tGa) / \Ker(\Id - \iota^\ast), \\    \tal^+_i-\tal^-_i &\longmapsto [d\tal^+_i], &i&=1,\ldots,A, \\     \beta_j &\longmapsto [d\beta_j], &j&=1,\ldots,B.   \end{aligned}\label{eq:Prymcpp}\end{equation}

\begin{rmk}
Note that with that description of the bases, the first lattices are the same, while the second lattices have slighly different bases given by an adjustment factor of 2 on some cycles inside the $\Prym_c(\pi)$ making the degeneration continuous as depicted in \cref{ex:continuity}. However, this construction does not take into account the weights in the dilated vertices, therefore we need to extend the definitions.
\end{rmk}

\begin{defn}\label{def:extendedPrym}
Let $\pi_w\colon \tGa\to\Ga$ be a  (possibly) weighted harmonic double cover where $\Ga$ is a weighted metric graph of genus $g_{\Ga}=b_1(\Ga)+|w|$. Then, the Prym variety of the double cover is defined as the following extended pptav
\begin{equation}
\overline{\Prym_c(\pi_w)}:=\Big( \overline{\Omega^1(\tGa)/\mathrm{Ker} (\Id-\iota^*)}, \quad \overline{\mathrm{Im} (\Id-\iota_*)}, \quad \overline{[\cdot, \cdot]}_P \Big).
	\label{eq:Prymcextended}
\end{equation}
%where the two lattices have rank $g_{\Ga}-1$, after adding $|\tilde w|-|w|$ dimensions, %where $\tilde w$ is the weight function on $\tGa$, 
More precisely, 
\[
\overline{\Omega^1(\tGa)/\mathrm{Ker} (\Id-\iota^*)}= \Omega^1(\tGa)/\mathrm{Ker} (\Id-\iota^*)\oplus\ZZ^{|\tilde w|-|w|}
\]
and \[
\overline{\mathrm{Im} (\Id-\iota_*)}= \mathrm{Im} (\Id-\iota_*)\oplus\ZZ^{|\tilde w|-|w|},
\]
and
the pairing $\overline{[\cdot,\cdot]}_P$  is the extension by the integration pairing $[\cdot,\cdot]_P$ in the trivial way. With this additional part, the dimension of the Prym variety  is now equal to $g_{\tGa}-g_{\Ga}$.
\end{defn}
We will often denote it simply by $\overline{\Prym_c(\pi)}$ and forget the dependence on the weighted graphs, whenever it is clear from the context.

\begin{rmk}
	Suppose $\pi_0\colon\tGa_0\to \Ga_0$ is obtained from $\pi\colon \tGa\to \Ga$ by contracting a subgraph $S\subset\Ga$ and its preimage $\wt S\subset \tGa$. Then, the integration pairing  $\overline{[\cdot, \cdot]}_P $ on $\overline{\Prym_c(\pi_0)}$ is the following 
	\begin{equation}
		\overline{[e_1, e_2]}_P=
		\begin{cases}
			0&\text{if either } e_1\,\text{or}\,e_2\in\wt S\\
			 [e_1, e_2]_P&\text{otherwise}.
		\end{cases}
	\end{equation}
\end{rmk}

\begin{ex}\label{ex:continuity}
Consider the free double cover of the dumbell graph in \cref{fig:freecase},  together with an orientation on the graphs.
Following \cite[Construction B]{LZ22}, the basis of anti-symmetric cycles consists of 
only one such cycle depicted in red.

 Let us contract the two loops together with their preimages, as in Figure \ref{fig:degeneration}. The result is  a dilated double cover whose  dilation subgraph has two connected components. By considering the construction of the basis starting from this graph, the anti-symmetric cycle is given by the two red edges. However, it is not the limit of the cycle constructed in the free case. Instead, the new basis for the continuous Prym, as in \cref{eq:Prymcbasis2}, takes into account the multiplicity $2$ given by the contraction. %Nonetheless, the dimension of the continuous Prym is $1$, and if we add the sum of the weights in the dilated components we get the same dimension which is $1$ but in general the dimension will be different.
\begin{figure}[H]
	\centering
	
	\begin{tikzpicture}
		
		\coordinate (1x) at (-2.5,0);
		\coordinate (2x) at (2.5-4,0);
		\coordinate (2xleft) at (-5,0);
		\coordinate (3x) at (-4,0);
		\coordinate (a1x) at (-2.5,2);
		\coordinate (a3x) at (-4,2);
		\coordinate (b1x) at (-2.5,5);
		\coordinate (b3x) at (-4,5);

		\foreach \i in {1x,3x,a1x,a3x,b1x,b3x}
		\draw[fill=black](\i) circle (0.15em);
		\draw[thin, black] (3x)--(1x);

		\node[circle, draw, minimum size=1cm] (c) at (-0.5-4,0) {};
		\node[circle, draw, minimum size=1cm] (c1) at (2-4,0) {};

		\draw[thick, red,->] (b3x)--(-3.3,5);
		\draw[thick, red,->] (-3.3,5)--(-3.1,5);
		\draw[thick, red] (-3.1,5)--(b1x);

		\draw[thick, red,->] (a1x)--(-3.1,2);
		\draw[thick, red,->] (-3.1,2)--(-3.3,2);
		\draw[thick, red] (-3.3,2)--(a3x);

		\draw (0.7-4,1) node [below] {$\downarrow$};

		\draw[thick, red,->] (a3x) to [out=45, in=-90] (-3.4,3.5);
		\draw[thick, red] (-3.4,3.5) to [out=90, in=-45] (b3x);

		\draw[thick, red,->] (a3x) to [out=90+45, in=-90](-4.6,3.5);
		\draw[thick, red] (-4.6,3.5) to [out=90, in=180+45]  (b3x);
		
		\draw[thick, red,->] (b1x) to [out=-45, in=90] (-3.4+1.5,3.4);
		\draw[thick, red] (-3.4+1.5,3.4) to [out=-90, in=45] (a1x);
		
		\draw[thick, red,->] (b1x) to [out=180+45, in=90] (-4.6+1.5,3.4);
		\draw[thick, red] (-4.6+1.5,3.4) to [out=-90, in=90+45] (a1x);
		
	\end{tikzpicture}
	
	\caption{A free double cover of the dumbell graph. In red the anti-symmetric cycle $ \tal_1^+-\tal_1^-$ on $\wt \Ga$.}
	\label{fig:freecase}
\end{figure}
\begin{figure}[H]
	\centering
	
	\begin{tikzpicture}
		
		\coordinate (1x) at (1.5-4,0);
		\coordinate (2x) at (2.5-4,0);
		\coordinate (2xleft) at (-5,0);
		\coordinate (3x) at (-4,0);
		\coordinate (a1x) at (1.5-4,2.5);
		\coordinate (a3x) at (-4,2.5);
		\coordinate (b1x) at (1.5-4,2.5);
		\coordinate (b3x) at (-4,2.5);
		
		\coordinate (1) at (1.5,0);
		\coordinate (3) at (0,0);
		\coordinate (a1) at (1.5,2.5);
		\coordinate (a3) at (0,2.5);

		\foreach \i in {1,3,a1,a3,1x,3x,a1x,a3x,b1x,b3x}
		\draw[line width=2pt, 
		black](\i) circle (0.50em);

		\draw[thin, black] (-3.8,0)--(-2.7,0);
		\draw[thin, black] (-3.8+4,0)--(-2.7+4,0);
		
		\draw (0.7-4,1.5) node [below] {$\downarrow$};
		\draw (0.7,1.5) node [below] {$\downarrow$};

		\draw[thick, red,->] (-3.8,2.6) to [out=45, in=180] (-3.22,2.8);
		\draw[thick, red] (-3.22,2.8) to [out=0, in=90+45] (-2.7,2.6);

		\draw[thick, red,->] (-2.7,2.4)  to [out=180+45, in=0] (-3.3,2.2);
		\draw[thick, red] (-3.3,2.2) to [out=180, in=-45]  (-3.8,2.4);

		%\draw[thick, red] to [out=-45, in=0]  ;
		% \draw[thick, red,<-] (-3.22,2.2) to [out=180, in=180+45]  ;
		
		\draw[thick, red,->] (-3.8+4,2.6) to [out=45, in=180] (-3.3+4,2.8);
		\draw[thick, red,->] (-3.3+4,2.8) to [out=0, in=180] (-3.1+4,2.8);
		\draw[thick, red] (-3.1+4,2.8) to [out=0, in=90+45] (-2.7+4,2.6);

		\draw[thick, red,->] (-2.7+4,2.4)  to [out=180+45, in=0] (-3.2+4,2.2);
		\draw[thick, red,->]  (-3.2+4,2.2)  to [out=180, in=0]  (-3.4+4,2.2);
		\draw[thick, red] (-3.4+4,2.2) to [out=180, in=-45]  (-3.8+4,2.4);
		
		%\draw[thick, red] (-3.8+4,2.6) to [out=45, in=90+45] (-2.7+4,2.6);
		%	\draw[thick, red] (-3.8+4,2.4) to [out=-45, in=180+45] (-2.7+4,2.4);
		
		\draw (1x) node  {$1$};\draw (3x) node  {$1$};    \draw (a3x) node  {$1$};    \draw (b1x) node  {$1$};   \draw (1) node  {$1$};\draw (3) node  {$1$};    \draw (a3) node  {$1$};   \draw (a1) node  {$1$};

	\end{tikzpicture}
	
	\caption{Basis for anti-symmetric cycles respectively on the divisorial Prym on the left and the continuous Prym variety on the right hand side.}
	\label{fig:degeneration}
\end{figure}
\end{ex}

\subsection{The space of harmonic double covers and the tropical Prym--Torelli map}
The moduli space of harmonic double covers of graphs of a fixed degree $d$ was constructed by Cavalieri, Markwig and Ranganathan in \cite{cavmarkrang}. We will give an explicit description of this moduli space in the case of degree $2$, that we denote by $\mathcal R^{trop}_g$.

% Let $\Gamma$ be a tropical curve  and let $\gamma$ be a union of cycles and weighted vertices. Then an unramified double cover of $\Gamma$ with dilation cycle $\gamma$ will be given by a free double cover of $\Gamma\setminus \gamma$.

% *** Describe construction of all such double covers****

% **** Define morphisms of unramified double covers****

The cells of $\mathcal{R}^{trop}_g$ are indexed by harmonic double covers $f:\wt{G}\to G$. 
The cell $\mathcal R^{trop}_f$ can then be identified with $\R_{\geq 0}^n$ (where $n$ is the number of edges of $G$). Indeed,  each point $(\ell_1,\ldots,\ell_n)\in \R_{\geq 0}^n$ corresponds to 
choosing non-negative edge lengths  for the edges of $G$ and the corresponding edges of $\wt G$.
When some $\ell_i$ equals 0, we will eventually  identify the corresponding point of $\mathcal{R}^{trop}_g$ with the graph obtained by contracting the edge.  

The set of harmonic double covers of finite weighted graphs  forms a poset, where $f_0\preceq f$ whenever $f_0$ is obtained from $f$ by composing the following operations:
\begin{enumerate}
    \item Isomorphism of double covers; or

    \item Contracting edges of the target of $f$ and the corresponding edges of the source.
\end{enumerate}

  In the second case, the construction above yields a map  $\mathcal R^{trop}_{f_0}\to \mathcal R^{trop}_{f}$, identifying the former as a boundary component of the latter. 
The space $\mathcal R^{trop}_g$ is then the direct limit

\[
\mathcal R^{trop}_g = \lim_{\longrightarrow} \mathcal R^{trop}_{f}
\]
as $f$ varies over all the combinatorial types of edge-free double covers of stable graphs of genus $g$. 
This construction naturally identifies points corresponding to isomorphic double covers. Indeed, every automorphism of $f$ corresponds to a bijection from $\calR_{f}$ to itself, resulting in identifying points with their image. 

In this paper, we will consider the moduli space of principally polarised abelian varieties $\mathcal A^{trop}_g$ together with its Voronoi compactification $\mathcal A^{trop,V}_g$, see \cite{BrannMeloViviani, Chantorelli} for  a precise description of the cone decomposition.  

\begin{defn}\label{def:prymtorelli}
    The \emph{tropical Prym--Torelli map} $P^{trop}_g\colon\mathcal R^{trop}_g\to \mathcal A^{trop,V}_{g-1}$,
assigns to a harmonic double cover $\pi\colon \tGa\to \Ga$ its extended tropical Prym variety  $P^{trop}_g(\pi):=\overline{\Prym_c(\pi)}$.

\end{defn}

\begin{rmk}\label{rmk:quadraticform}
    Equivalently, one can define the map  using the positive semi-definite quadratic form associated to $\overline{\Prym_c(\pi)}$, for any double cover $\pi$. In \cite{BrannMeloViviani}, the authors use this perspective for the Torelli morphism and we refer to Section 5 for the details.
\end{rmk}

We will prove that this map is continuous in \cref{thm:prymtorellicontinuous}.

%We now consider a dilated double cover $\pi:\tGa\to \Ga$ with dilation index $d(\tGa/\Ga)$, so that $A=g_0-d(\tGa/\Ga)+1$ and $B=d(\tGa/\Ga)-1$. In terms of the bases we have\begin{align*}  \Ker (\Id-\iota^*) &= \big\langle d\tal^+_1+d\tal^-_1,\ldots,d\tal^+_A+d\tal^-_A,d\tga_1,\ldots,d\tga_C \big\rangle, \\ \mathrm{Im} \pi^* &= \big\langle d\tal^+_1+d\tal^-_1,\ldots,d\tal^+_A+d\tal^-_A,2d\tga_1,\ldots,2d\tga_C \big\rangle,\end{align*}hence the first lattices are the same and have basis\begin{equation} \label{eq:Prymcbasis1} (\Coker \pi^*)^{\mathrm{tf}}=\Omega^1_{\tGa}(\tGa)/\Ker (\Id-\iota^*)= \big\langle[d\tal^+_1],\ldots,[d\tal^+_A],[d\tbe_1],\ldots,[d\tbe_B] \big\rangle.\end{equation}The second lattices, however, are distinct. Indeed, the second lattice of $\Prym_c(\pi)$ \begin{equation}\mathrm{Im} (\Id-\iota_*)= \big\langle \tal^+_1-\tal^-_1,\ldots,\tal^+_A-\tal^-_A,2\tbe_1,\ldots,2\tbe_B \big\rangle,\end{equation}while the second lattice of $\Prym_d(\pi)$ is\[\Ker \pi_*=(\Im (\Id-\iota_*))^{\mathrm{sat}}= \big\langle \tal^+_1-\tal^-_1,\ldots,\tal^+_A-\tal^-_A,\tbe_1,\ldots,\tbe_B \big\rangle.\]Hence $\Prym_d(\pi)$ is the image of $\Id-\iota$, and the natural map $\Prym_c(\pi)\to \Prym_d(\pi)$ is a free isogeny with geometric degree equal to the index of $\Im (\Id-\iota_*)$ in $\Ker \pi_*$, which is equal to $2^B=2^{d(\tGa/\Ga)-1}$.

 \section{Construction of the intrinsic basis of anti-symmetric cycles}\label{sec:continuity}
%\MM{Dice anche qui un pò di più su che cosa in concreto sarà costruito per introdurre la scelta dell'albero. Devi assumere che un lettore può voler leggere l'inizio di ogno capitolo anche senza aver letto i precedenti}

In order to construct any basis for anti-symmetric cycles for the Prym variety associated to a double cover $\pi\colon \tGa\to \Ga$, one has to choose a spanning tree $T$ for $\Ga$ and a spanning tree $\wt T$ for $\tGa$ and construct the bases for $H_1(\Ga,\ZZ)$ and $H_1(\tGa,\ZZ)$. There are several ways to choose a spanning tree for the graph $\tGa$ by starting with a spanning tree $T$ in $\Ga$. For instance, in Construction A in \cite{LZ22}, for free double covers, the spanning tree $\wt T$ contains both preimages $\pi^{-1}(T)^+$ and $\pi^{-1}(T)^-$ and exactly one edge $\tilde e_g^+$ such that $ e_g\in E(\Ga\setminus T)$.
  Our construction of the spanning tree, which works  for both free and dilated covers, does not necessarily contain the whole of $\pi^{-1}(T)^-$. Instead, it
  allows to 
   choose which edges in $\pi^{-1}(T)^-$ should appear in $\wt T$.  
    The idea is that, starting with a spanning tree $T$ for $\Ga$, one may choose the spanning tree $\wt T$ for $\tGa$ 
   %used to form the basis of antisymmetric cycles 
   more generally in order to make it degenerate better under contractions. In detail, it
   must contain only the entire $\pi^{-1}(T)^+$ instead of $\pi^{-1}(T)^+\cup \pi^{-1}(T)^-$, and it must be contained in $\pi^{-1}(T)\cup \pi^{-1}(P_{\pi})^+$,
   where we label with $+$ and $-$ the two distinct preimages under the map $\pi$ and with $P_{\pi}$ the set of loops in $\Ga$ whose preimages are two parallel edges. 
   Furthermore, in the case of dilated double covers, our
   construction     differs from the approach used  in \cite{RZ_ngonal}, as it is more intrinsic.

\begin{construction}
	\label{construction}
	Let $\pi\colon\tGa\to \Ga$ be a harmonic double cover and consider an orientation on $\Ga$ and the induced orientation on $\tGa$. Consider a spanning tree $T\subset \Ga$ and its preimage $\pi^{-1}(T)\subset\tGa$. 
    Then $\pi^{-1}(T)=\pi^{-1}(T)^+\cup \pi^{-1}(T)^-$ and contains a spanning tree of $\tGa$ if and only if $\Ga_{\dil}\ne\emptyset$. Indeed, if $\pi$ is a free double cover, then the preimage of the tree is disconnected. Instead, if $\pi$ is dilated, then there is at least one vertex $v\in\Ga_{\dil}$ with $\tilde v^+=\tilde v^-$. Since by construction $\tilde v^+\in \pi^{-1}(T)^+$ and   $\tilde v^-\in \pi^{-1}(T)^-$, then $\pi^{-1}(T)$ is connected via $\tilde v^+=\tilde v^-$ and $\pi^{-1}(T)$ contains a spanning tree of $\tGa$. Moreover, if the number of connected components of $\Ga_{\dil}$ is greater than $1$, then
    $\pi^{-1}(T)$ contains a cycle that passes through two dilated vertices, therefore it strictly contains a spanning tree of $\tGa$.
    If $v$ is a dilated vertex, since the two preimages of $v$ coincide, we will denote by $\tilde v$ its unique preimage.
   In the discussion below, the case of loops will play an important role. In particular, consider the following type of loop:  
    \begin{defn}\label{def:piparallelloops}
Let $\pi\colon \tGa\to \Ga$ be a harmonic double cover of metric graphs.
         A loop $x\in\Ga$ is a \emph{$\pi$-parallel loop} if the preimage $\pi^{-1}(x)$ consists of two parallel edges $\tilde x^+,\tilde x^-$.

    \end{defn}

    Notice that $\pi$-parallel loops are not dilated and their preimage does not consist of two loops. 
    Moreover, the vertices adjacent to these loops are free, namely they have two distinct preimages.
   Let $\wt T$ be a spanning tree of $\wt \Gamma$ satisfying the following conditions:
    	\begin{itemize}
		 \item[(a)]\label{1.a} if $\pi$ is free and $\Gamma$ contains no $\pi$-parallel loops, then we will proceed as in construction A in \cite{LZ22} and consider a spanning tree $\wt T$ of $\wt \Gamma$ consisting of $\pi^{-1}(T)^+\cup \pi^{-1}(T)^-\cup \tilde{e}_g^+$, where $\{e_1,\dots,e_g\}=E(\Ga)\setminus T$ (notice that in this case $e_g$ can not be a loop of $\Gamma$);
		\item[(b)]\label{1.b} if $\pi$ is dilated or if $\Gamma$ contains $\pi$-parallel loops, then
        $\wt T$  must contain $\pi^{-1}(T)^+$ and must be contained in $\pi^{-1}(T)\cup \pi^{-1}(P_{\pi}%\{\text{$\pi$-parallel loops}\}
        )^+$.
    
	\end{itemize}

	We call such a tree $\wt T$ \emph{admissible} with respect to $T$.
	
	Following the notation of \cite[Prop. 4.14]{RZ_ngonal}, given a cycle $\gamma\in H_1(\Gamma, \mathbb Z)$ and  an oriented edge $e\in E(\Gamma)$, denote with $\langle\gamma,e\rangle$ the coefficient of $e$ in $\gamma$; analogously given a cycle $\gamma\in H_1(\tilde\Gamma, \mathbb Z)$ and an oriented edge $\tilde e\in E(\tGa)$, denote with $\langle\gamma,\tilde e\rangle$ the coefficient of $\tilde e$ in $\tilde\gamma$.

	Now, for each $e\in E(\Gamma)$, such that $\tilde e^-\not \in \wt T$,
	let $\wt\gamma_e^-$ be the unique cycle contained in $\wt T\cup \tilde e^-$ such that $\langle\wt\gamma_e^-,\tilde e^-\rangle=1$.
	Associated with $\wt\gamma_e^-$, consider the anti-symmetric cycle $$\beta_e:= \wt\gamma_e^- -\iota \wt\gamma_e^-.$$
	Note that $\beta_e=0$ if and only if $\wt\gamma_e^-=\iota\wt\gamma_e^-$, i.e., $\wt\gamma_e^-$ is symmetric. In the next section we will describe all situations for which we have $\beta_e=0$ and construct the basis for anti-symmetric cycles.
   \end{construction}

\subsection{Independence of the anti-symmetric cycles }

Let us discuss in both scenarios how many anti-symmetric cycles are non-zero and prove that  they   are all independent.
We start with \cref{1.a}.(a).

\begin{lem}\label{lem:independent(a)}
Let $\pi$ be a double cover as in \cref{construction}.(a). Then, there are $b_1(\Ga)-1$ non-zero antisymmetric cycles. Furthermore, they are all independent.
\end{lem}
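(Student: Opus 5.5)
In case (a) the cover is free and $\Ga$ has no $\pi$-parallel loops, and $\wt T=\pi^{-1}(T)^+\cup\pi^{-1}(T)^-\cup\tilde e_g^+$. The plan is to count the edges $e$ with $\tilde e^-\notin\wt T$, identify which of them give $\beta_e=0$, and then prove independence through a triangularity argument on coefficients.

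\textbf{Counting.} The edges of $\tGa$ outside $\wt T$ are $\tilde e_i^\pm$ for $i<g$, together with $\tilde e_g^-$, where $g=b_1(\Ga)$. So Construction~\ref{construction} produces cycles $\wt\gamma_{e_i}^-$ for $i=1,\dots,g$. The edges of $T$ are excluded, because both of their lifts lie in $\wt T$.

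\textbf{Which $\beta_e$ vanish.} First consider $e_g$. Since $\pi^{-1}(T)^\pm$ are two disjoint copies of $T$ joined only by $\tilde e_g^+$, the fundamental cycle $\wt\gamma_{e_g}^-$ is the lift of the cycle $\gamma_{e_g}$ of $T\cup e_g$. Concretely, it goes through $\tilde e_g^-$, then along one copy of the tree, then through $\tilde e_g^+$, then back along the other copy. This loop is $\iota$-invariant, so $\beta_{e_g}=0$.

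Now take $i<g$. The endpoints of $\tilde e_i^-$ lie either in the same sheet or in different sheets. If they lie in the same sheet, $\wt\gamma_{e_i}^-$ is the lift of $\gamma_{e_i}$ inside one sheet. Its $\iota$-image lies in the other sheet and uses $\tilde e_i^+$, which is not in $\wt T$. Hence $\iota\wt\gamma_{e_i}^-\neq\wt\gamma_{e_i}^-$, and $\beta_{e_i}$ has coefficient $1$ on $\tilde e_i^-$ and $-1$ on $\tilde e_i^+$.

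If the endpoints lie in different sheets, the cycle must cross through $\tilde e_g^+$. Its image under $\iota$ uses $\tilde e_i^+\notin\wt T\cup\tilde e_i^-$. Moreover $\tilde e_i^\pm$ are distinct edges, so the two cycles differ, and $\langle\beta_{e_i},\tilde e_i^-\rangle=1$ still holds. Here I need $\tilde e_i^+\neq\tilde e_i^-$ and that the edge does not double back on itself. This is exactly where the absence of $\pi$-parallel loops (and the fact that $e_g$ is not a loop) is used: it rules out degenerate loops whose two lifts cancel.

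This leaves $g-1=b_1(\Ga)-1$ nonzero cycles $\beta_{e_1},\dots,\beta_{e_{g-1}}$.

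\textbf{Independence.} For $i,j<g$, the cycle $\wt\gamma_{e_j}^-$ lies in $\wt T\cup\tilde e_j^-$, and $\iota\wt\gamma_{e_j}^-$ lies in $\iota(\wt T)\cup\tilde e_j^+=\pi^{-1}(T)\cup\tilde e_g^-\cup\tilde e_j^+$. Therefore the only non-tree edges appearing in $\beta_{e_j}$ are among $\tilde e_j^\pm$ and $\tilde e_g^\pm$. Consequently
\[
\langle\beta_{e_j},\tilde e_i^-\rangle=\delta_{ij}\qquad\text{for } i,j<g.
\]
Pairing a vanishing integer (or rational) combination $\sum_j c_j\beta_{e_j}=0$ with $\tilde e_i^-$ gives $c_i=0$, so the cycles are independent.

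\textbf{Main obstacle.} The main step is the nonvanishing claim in the mixed-sheet case. There one must check carefully that $\iota\wt\gamma^-_{e_i}$ cannot coincide with $\wt\gamma^-_{e_i}$, using the orientation conventions and the loop hypothesis. The coefficient computation on $\tilde e_i^-$ handles it uniformly: $\iota\wt\gamma_{e_i}^-$ has zero coefficient on $\tilde e_i^-$, because $\tilde e_i^-\notin\iota(\wt T)\cup\tilde e_i^+$ once $\tilde e_i^+\neq\tilde e_i^-$.
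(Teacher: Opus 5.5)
Your proposal is correct and follows essentially the same route as the paper's proof. You count the $b_1(\Gamma)$ non-tree edges, note that $\wt\gamma_{e_g}^-$ is the full, $\iota$-symmetric preimage of the fundamental cycle of $e_g$, show $\beta_{e_i}\neq 0$ for $i<g$ because $\tilde e_i^-$ occurs in $\wt\gamma_{e_i}^-$ but not in $\iota\wt\gamma_{e_i}^-$, and get independence from $\langle\beta_{e_j},\tilde e_i^-\rangle=\delta_{ij}$. One minor correction: the fact $\tilde e_i^+\neq\tilde e_i^-$ that your coefficient argument needs comes from the cover being free, not from the absence of $\pi$-parallel loops, which plays no role in that step.
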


\begin{proof}

         Suppose that $\pi$ has no dilation and $P_{\pi}=\emptyset$.
         In this case there are no edges $e$ in $T$ such that $\tilde e^-\not \in \wt T$,
         therefore the non-zero cycles $\beta_e$ satisfy $e\in \Ga\setminus T$.
         
	Then  $\wt T=\pi^{-1}(T)^+\cup \pi^{-1}(T)^-\cup \tilde{e}_g^+$ and $\beta_e= 0$ if and only if $e=e_g$.
    Indeed, if $e=e_g$ the cycle is of the form $\tga_{e_g}^-=\pi^{-1}(\ga_{e_g})^++\pi^{-1}(\ga_{e_g})^-$ where $\ga_{e_g}$ is the cycle in $\Ga$ that contains $e_g$ and it is clearly symmetric.
     If, instead, $e\in E(\Ga\setminus T)$ and $e\ne e_g$, %nor a $\pi$-parallel loop with preimage in $\wt T$, 
     then  $\tilde \ga_{e}^-$ contains $\tilde e^-$ and does not contain $\tilde e^+$ because $e\not\in T$, hence $\iota \tilde \ga_{e}^-$ contains $\tilde e^+$ and $\beta_e\ne 0$.
     Therefore
     there are exactly  $b_1(\Gamma)-1$ anti-symmetric  cycles $\beta_e$ that are non-zero, where $e\in E(\Gamma\setminus (T\cup e_g))$.
     Let us now  check that these $\beta_e$'s are independent. 
     We see that  these form a basis of anti-symmetric cycles in  $H_1(\wt \Gamma)$.
	In fact, the $\beta_e's$ are $b_1(\Gamma)-1$ independent anti-symmetric cycles in $\wt \Gamma$ since $\langle\beta_e,\tilde e^-\rangle=1$ and giving $f\neq e$, with $f\in E(\Gamma\setminus T)$, we have that $\langle\beta_f,\tilde e^-\rangle=0$.
	Notice that these cycles correspond, up to a sign given by orientation, to the ones in \cite[Construction B]{LZ22}.
\end{proof}

Let us focus now on
\cref{construction}.(b), when
either  $\pi$ is free and $\Ga$ contains $\pi$-parallel loops or $\pi$ is dilated.	
	Let $m_d$ and $n_d$ denote, respectively, the number of dilated edges and of dilated vertices of $\Gamma$. Denote also by $m_d^T$ the number of edges in $T$ which are dilated. Then in $T$ there are $|V(\Gamma)|-1-m_d^T$ edges which are not dilated. % Moreover, recall that $k$ is number of $\pi$-parallel loops $e$ such that $\tilde e^+\in \wt T$, namely $k=|P^{\wt T}_{\pi}|$ as in \cref{nota:piparallelloops}.
Moreover, let us introduce the following notation that will be used in the following Lemma and throughout the rest of the paper. 

\begin{notation}\label{nota:piparallelloops}
          We denote by $P_{\pi}$ the set of $\pi$-parallel loops of $\Ga$ and with $P^{\wt T}_{\pi}$ those $\pi$-parallel loops $x$ such that $\tilde x^+\in\wt T$ where $\wt T$ is an admissible spanning tree of $\tGa$ and we will use the term $k$ for the size of $P_{\pi}^{\wt T}$. Moreover, we will use the notation $V(P_{\pi})$ and $V(P_{\pi}^{\wt T})$ for the  sets of vertices attached to  $\pi-$parallel loops.   
        \end{notation}

\begin{lem}\label{lem:independent(b)}
    Let $\pi$ be a harmonic double cover as in \cref{construction}.(b). Then, there are $b_1(\wt\Gamma)-b_1(\Gamma)$ non-zero antisymmetric cycles. Furthermore, they are all independent.
\end{lem}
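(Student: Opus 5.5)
We count the edges $e\in E(\Ga)$ with $\tilde e^-\notin\wt T$, determine exactly which of the corresponding $\beta_e$ vanish, and then prove independence by a triangularity argument on the coefficients $\langle\beta_e,\tilde e^-\rangle$.

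\emph{Edges with $\tilde e^-\notin\wt T$.} Since $\wt T$ contains $\pi^{-1}(T)^+$ and lies in $\pi^{-1}(T)\cup\pi^{-1}(P_\pi)^+$, every edge $\tilde e^-$ with $e\notin T$ and $e$ non-dilated is outside $\wt T$. A dilated edge has a single preimage, which we regard as $\tilde e^+=\tilde e^-$. Dilated edges of $T$ therefore lie in $\pi^{-1}(T)^+\subset\wt T$. Dilated edges not in $T$ give candidate cycles $\wt\gamma_e^-$ that are automatically symmetric. Indeed, $\wt\gamma_e^-\cup\iota\wt\gamma_e^-$ consists of a path in $\wt T$ together with the same edge. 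I expect to check that $\beta_e=0$ exactly when $e$ is dilated, or when $e\in P^{\wt T}_\pi$ (in that case $\wt\gamma_e^-=\tilde e^--\tilde e^+$ and $\beta_e=2\wt\gamma_e^-\neq 0$; so this case must be handled carefully, see below).

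\emph{Counting.} The vertex count gives $|V(\tGa)|=2|V(\Ga)|-n_d$, so $\wt T$ has $2|V(\Ga)|-n_d-1$ edges. Of these, $|V(\Ga)|-1$ come from $\pi^{-1}(T)^+$ and $k$ come from $\pi^{-1}(P_\pi^{\wt T})^+$. This leaves $|V(\Ga)|-n_d-k$ edges of the form $\tilde e^-$ with $e\in T$ non-dilated. Among the $|V(\Ga)|-1-m_d^T$ non-dilated edges of $T$, exactly $n_d-1-m_d^T+k$ therefore have $\tilde e^-\notin\wt T$. Adding the $b_1(\Ga)-(m_d-m_d^T)$ non-dilated edges outside $T$ gives $b_1(\Ga)+n_d-m_d-1+k$ candidate cycles. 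Comparing with \cref{eq:relationbettinumbers}, which gives $b_1(\wt G)-b_1(G)=b_1(G)+n_d-m_d-1$, exactly $k$ of the candidates must be discarded. These are the $\pi$-parallel loops $x\in P^{\wt T}_\pi$: since $\tilde x^+\in\wt T$, the cycle $\wt\gamma_x^-=\tilde x^--\tilde x^+$ is already anti-symmetric, and it is attached to the tree only at one vertex. The plan is to show that for the $k$ corresponding edges $e\in T$ (those whose $\tilde e^-$ was dropped to make room for $\tilde x^+$), the cycle $\beta_e$ either vanishes or equals a combination of $\beta_x$'s. Equivalently, the correct bookkeeping identifies $k$ redundant or zero cycles. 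The final statement then reads: after discarding the zero $\beta_e$, which are exactly those with $\wt\gamma_e^-$ symmetric, there remain $b_1(\wt\Ga)-b_1(\Ga)$ cycles.

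\emph{Independence.} For each surviving $e$ we have $\langle\beta_e,\tilde e^-\rangle=1-\langle\wt\gamma_e^-,\tilde e^+\rangle\in\{1,2\}$. The case $2$ occurs only for $\pi$-parallel loops. For $f\neq e$, both $\tilde f^-$ and $\tilde f^+$ (when $\tilde f^+\notin\wt T$) lie outside $\wt T\cup\tilde e^-$, so $\langle\beta_f,\tilde e^-\rangle=0$. Edges $\tilde e^+$ with $e\in T$ are in $\wt T$, however, so off-diagonal contributions can appear only through $\iota$ applied to $\tilde e^-$ with $e\in T$. To handle these, I would order the edges $e\in T$ with $\tilde e^-\notin\wt T$ by distance from the dilated part (or from $P_\pi$-vertices) in $T$. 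With this ordering the coefficient matrix $\bigl(\langle\beta_f,\tilde e^-\rangle\bigr)$ is triangular with nonzero diagonal, which gives independence over $\mathbb Q$.

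\emph{Main obstacle.} The main obstacle is this last step, together with the precise identification of which candidates vanish. Both depend on controlling how $\wt\gamma_e^-$ for $e\in T$ crosses back through $\pi^{-1}(T)^-$ and the dilated vertices. I would first try the ordering argument above, and fall back on a rank computation via $\pi_*$ and the dimension count if it fails.
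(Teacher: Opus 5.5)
Your edge count is correct, and more direct than the paper's induction on $k$. The gap is in identifying which $\beta_e$ vanish, which comes from an orientation error. With the induced orientation, $\iota$ preserves edge orientations (this is what $\iota_*\tilde e^\pm=\tilde e^\mp$ encodes). So if $\tilde x^+$ runs from $\tilde v^+$ to $\tilde v^-$, then $\tilde x^-=\iota\tilde x^+$ runs from $\tilde v^-$ to $\tilde v^+$. For $x\in P^{\wt T}_\pi$ the fundamental cycle is therefore $\wt\gamma_x^-=\tilde x^-+\tilde x^+$, which joins $\tilde v^+$ and $\tilde v^-$ rather than being attached at one vertex. It is $\iota$-invariant and pushes forward to $2x$, hence $\beta_x=0$. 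These are exactly the $k$ candidates to discard, as in the paper.

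Your proposed alternative, that $k$ of the $\beta_e$ with $e\in T$ vanish or are combinations of the $\beta_x$, would fail. Take $e\in T$ with $\tilde e^-\notin\wt T$. Removing $e$ splits $T$ into two sides, and the only edges of $\wt T\cup\{\tilde e^-\}$ joining their preimages are $\tilde e^+$ and $\tilde e^-$, since preimages of $\pi$-parallel loops stay over one vertex. So $\wt\gamma_e^-$ must return through $\tilde e^+$ with coefficient $-1$, giving $\langle\beta_e,\tilde e^-\rangle=2$. All these cycles are non-zero, and the lemma counts non-zero cycles, so none may be discarded. In particular, the diagonal value $2$ occurs for tree edges, not for loops; for $x\in P^{\wt T}_\pi$ one gets $1-1=0$.

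For independence, you rightly notice a point the paper's table glosses over. The pairing $\langle\beta_f,\tilde e^-\rangle$ can be non-zero for $f\notin T$ and $e\in E^{\wt T}_\pi$; in \cref{ex:graphwithbasis}, $\beta_{e_1}$ has coefficient $1$ on $\tilde f_0^-$. No ordering by distance is needed, though. By the same cut argument, for distinct $e,f\in E^{\wt T}_\pi$ the edge $\tilde e^+$ is the only edge of $\wt T\cup\{\tilde f^-\}$ crossing the cut of $e$, so $\wt\gamma_f^-$ has coefficient $0$ on it. Hence the matrix $\bigl(\langle\beta_f,\tilde e^-\rangle\bigr)$ is block triangular, with diagonal blocks $I$ (for $e\notin T$) and $2I$ (for $e\in E^{\wt T}_\pi$).

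Finally, your claim that dilated edges outside $T$ give symmetric $\wt\gamma_e^-$ is not automatic; the paper asserts the same thing. The $\wt T$-path between the endpoints of such an edge is the $+$-lift of the corresponding $T$-path, and it is $\iota$-invariant only if that $T$-path lies in $\Ga_{\dil}$. Consider the theta graph with two dilated edges $d_1,d_2$ and a non-dilated edge $c$, all oriented the same way, with $T=\{c\}$. There $\beta_{d_1}=\beta_{d_2}=\tilde c^--\tilde c^+\neq 0$ and $\beta_c=2\beta_{d_1}$. So you need an extra hypothesis: either $T$ contains a spanning tree of each component of $\Ga_{\dil}$, or $m_d=0$.
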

\begin{proof}
The anti-symmetric cycles $\beta_e$ in \cref{construction}.(b) arise both from edges in the complement of $T$ and from edges $e\in E(T)$ whose preimage $\tilde e^-$ does not belong to $\wt T$.
	We distinguish these two cases. Setting $k=|P_{\pi}^{\wt T}|$ as in \cref{nota:piparallelloops}, we show that the first set contributes $b_1(\Gamma)-(m_d-m_d^T)-k$ non-zero antisymmetric cycles, while the second contributes $-m_d^T+n_d-1+k$. Altogether, this yields
    \[b_1(\Gamma)-m_d+n_d-1=b_1(\wt\Gamma)-b_1(\Gamma)\]anti-symmetric cycles.
	\begin{itemize}
	    \item 
	Let $e\in E(\Gamma)\setminus T$ and assume that $e$ is not dilated. 
    Indeed, in the dilated case we have  $\tilde e^-=\iota\tilde e^-$, hence $\tilde\gamma^-_e=\iota \tilde\gamma^-_e$ and the associated cycle $\beta_e$ vanishes. For a non-dilated edge $e$, we associate  an anti-symmetric cycle $\beta_e$ as above. There are at most
	$
	b_1(\Gamma)-(m_d-m_d^T)
	$
	such cycles. Let us prove that  exactly $k$ of them are zero.
   If $e\in P^{\wt T}_{\pi}$, then $\tilde e^-+\tilde e^+$ forms a cycle so $\tilde \ga_{e}^-=\tilde e^-+\tilde e^+$ and therefore $\beta_{e}%\tilde \ga_{e}^--\iota \tilde \ga_{e}^-
   =0$. 
    Moreover, since  $\wt T\subset \pi^{-1}(T)\cup \pi^{-1}(P^{\wt T}_{\pi}%\{\text{$\pi$-parallel loops}\}
    )^+$,  the cycles $\beta_e$ corresponding to edges  $e\not\in P^{\wt T}_{\pi}$ are all non-zero, because $\tilde e^+\notin \wt\gamma_e^-$. 
Thus, so far we have constructed 
\[
b_1(\Gamma)-(m_d-m_d^T)-k
\]
non-zero anti-symmetric cycles each satisfying $\langle\beta_e,\tilde e^-\rangle =1$.

\item Let $e\in T$ such that $\tilde e^-\notin \wt T$.  In particular, the edge $e\not\in \Ga_{\dil}$, because otherwise $\tilde e^+=\tilde e^-$ and $\tilde e^+\in\wt T$. Moreover, such $e$ is not a $\pi$-parallel loop because $e\in T$.
	As before, let $\tilde\gamma_e^-$ be the unique cycle of $H_1(\tGa,\mathbb Z)$ contained in $\wt T\cup \tilde  e^-$ such that $\langle \tilde\gamma_e^-, \tilde e^-\rangle=1$.

	Notice that since $\wt \gamma_e^-\subset \pi^{-1}(T)\cup \pi^{-1}(P^{\wt T}_{\pi}%\{\text{$\pi$-parallel loops}\}
    )^+\cup \tilde e^-$ then
    $\wt\gamma_e^-$ contains $\tilde e^+=\iota(\tilde e^-)$ and so $\langle \wt\gamma_e^-, \tilde e^+\rangle=-1$. 
    Indeed, we are assuming that  $e\in T$ but $\tilde e^-\not \in T$, therefore $\pi^{-1}(T)^-$ is disconnected and $\pi^{-1}(T)^-\cup \tilde e^-$ is acyclic. In order to construct the cycle $\tilde\ga_{e}^-$ contained in $\wt T$ it is necessary to add either edges in $\pi^{-1}(T)^+$ or 
    $\pi^{-1}(P^{\wt T}_{\pi}%\{\text{$\pi$-parallel loops}\}
    )^+$. The latter, however, is not enough to complete the cycle, therefore there have to be edges in $\pi^{-1}(T)^+$ and the smallest cycle containing $\tilde e^-$ would also contain $\tilde e^+$ since there is only one path in $\pi^{-1}(T)^+$ connecting the endpoints of  $\tilde e^-$. The only edges traversing $\pi^{-1}(T)^+$ and $\pi^{-1}(T)^-$ are vertical edges attached to the vertices $\tilde v_i^+$ and $\tilde v_i^-$ corresponding to a $\pi$-parallel loop attached at $v_i$, which is either on the left or on the right side of the edge $\tilde e^-$.

    Now, if  the cycle $\wt\gamma_e^{-}$ is strictly contained in  $\pi^{-1}(T)$, it  passes through two intersection points between the trees $\wt T^{-}$ and $\wt T^{+}$. In particular, the cycle contains the unique path between the intersection points in each tree. As the trees are isomorphic, it follows that 
	$\iota \wt\gamma_e^-=-\wt\gamma_e^-$.
	Therefore, defining 
	\[
	\beta_e:=\wt\gamma_e^--\iota \wt\gamma_e^-
	\]
	as before, we in fact get that  $\beta_e=2\wt\gamma_e^-$. 
Otherwise,  if the cycle $\wt\gamma_e^{-}$ intersects exactly one preimage of a $\pi$-parallel loop that we call $x$, then 
\[
\beta_e=2E(\pi^{-1}(T)\cap \wt\gamma_e^{-})+\tilde x^+-\iota \tilde x^+.
\]
If the cycle $\wt\gamma_e^{-}$ intersects more than one preimage of $\pi$-parallel loops then it will traverse the two trees $\pi^{-1}(T)^+$ and $\pi^{-1}(T)^-$ at most twice (because it has to be a simple cycle) and will be of the form 
\[
\beta_e=2E(\pi^{-1}(T)\cap \wt\gamma_e^{-})+\tilde x_1^+-\iota \tilde x_1^++\tilde x_2^+-\iota \tilde x_2^+,
\]
where $x_1,x_2\in P^{\wt T}_{\pi}$.
In this situation, all edges in the cycle $\beta_e$ except the preimages of the edges in $P^{\wt T}_{\pi}$ will have multiplicity $2.$

    %Indeed, let $e\in T$ such that $\tilde e^-\not\in \wt T$. Then for any $f\ne e$, if $\tilde f^+\in \wt\gamma_e^-$ also $\tilde f^-$ is in  $ \wt\gamma_e^-$. And since $\pi(\wt\gamma_e^--\iota\wt\gamma_e^-)=0$ and sits inside $T$, there has to be for each edge also its conjugated by the involution $\iota$.
	Let us see that there are
    \begin{equation}\label{eq1}
        |E(\pi^{-1}(T)\setminus \wt T)|=-m_d^T+n_d-1+k
    \end{equation}
    
	of such cycles and these are all non-zero.

    This can be proved by induction on the cardinality of $P^{\wt T}_{\pi}$, namely on  $k\geq 0$.
If $k=0$, %there are no $\pi$-parallel loops with preimage in $\wt T$, 
the number of edges $e$ such that $\tilde e^- \not\in\wt T$ is obtained by taking the total number of edges in $\pi^{-1}(T)$ which is $2V(\Ga)-2$
and subtracting the number of edges in a spanning tree of $\tGa$ which is $|V(\tGa)|-1$.
Since each non-dilated vertex of $V(\Ga)$
has two preimages and each dilated
 dilated vertex has only one, we have
$
V(\tGa)=2(V(\Ga)-n_d)+n_d
$
Subtracting also  the number of dilated edges, we obtain:
\[2V(\Ga)-2-(2(V(\Ga)-n_d)+n_d-1)-m_d^T=-m_d^T+n_d-1.\]

Now, assume that \eqref{eq1} holds for $k-1$, and let us prove it for $k$.
Starting from a spanning tree $\wt T$ with $k-1$ preimages of $\pi$-parallel loops in $\wt T$, consider the subgraph $\wt T'=\wt T\cup \tilde x_k^+$,  where $x_k$ is a $\pi$-parallel loop  attached to a vertex $v_k$ in $T$. The graph $\wt T'$ contains a cycle, since the edge $\tilde x_k^+$ connects the  vertex $\tilde v_k^+$ with the vertex $\tilde v_k^-$, and these two vertices were already connected by a unique path in $\wt T$, as $\wt T$ is a spanning tree. 
To include $\tilde x_k^+$ in the spanning tree, one must remove an edge from $\wt T$. %and choose it such that it is not a $\pi$-parallel loop.
Choose $e_0\in T$ be such that $\tilde e_0^+$ and $\tilde e_0^-$ lie in $\wt T$ (not necessarily adjacent to $x_k$), and consider $\wt T'\setminus \tilde e_0^-$. Then the number of edges having only one preimage in the admissible spanning tree increases by one and  the edge $e_0$ becomes one of them.
Thus, adding the $k$-th $\pi$-parallel loop contributes exactly one additional such edge
and in total 
there are $k$  edges arising from the $k$ $\pi$-parallel loops in $P_{\pi}^{\wt T}$.
Summing up, we have constructed \[b_1(\Gamma)-m_d+n_d-1=b_1(\Gamma)+\sharp W_\pi -1 -b_1(\Ga_{\dil})+c(\Ga_{\dil})=b_1(\wt\Gamma)-b_1(\Gamma)\] cycles.
\end{itemize}

Let us check that these are all independent. 
	Indeed,  if  $e\in E( (\Ga\setminus T)\setminus P^{\wt T}_{\pi})$ then $\langle \beta_e,\tilde e^-\rangle=1$ and $\langle \beta_f,\tilde e^-\rangle=0$ for all $f\in E(\Gamma\setminus T)$ such that $f\neq e$. %if  $e\in P^{\wt T}_{\pi}$then  $\langle \beta_e,\tilde e^-\rangle=0$ and $\langle \beta_f,\tilde e^-\rangle=0$ for all $f\in E(\Gamma\setminus T)$ such that $f\neq e$,
    If $e\in T$ such that $\tilde e^-\not\in T$ then $\langle \beta_e,\tilde e^-\rangle=2$  and $\langle \beta_f,\tilde e^-\rangle=0$ for all $f\in E(\Gamma\setminus T)$ such that $f\neq e$. %and $f\ne \{\text{$\pi$-parallel loops}\}$. 
    In the first case we exclude $e\in P^{\wt T}_{\pi}$ because $\beta_e=0$ and in the second case
    $e\not\in P_{\pi}$ because $e\in T$ %otherwise $\beta_e=0$,
    therefore the intersection between edges in $\Ga\setminus T$ and   $\tilde e^-$ cannot be $1$ (it could happen when $\beta_e$ is not entirely $2\tga_e^-$).

\end{proof}

    Let us sum up the intersection pairing in the following

\begin{center}
\begin{tabular}{|c|c|c|}
\hline
 $\langle\cdot ,\cdot\rangle$& $\tilde e^-$ %s.t. $\tilde e^{\pm}\not \in \wt T$%\in E((\Ga\setminus T) \setminus P_{\pi}^{\wt T})$
 &$\tilde f^-$ s.t. $f\ne e$ %s.t. $\tilde f^{\pm}\not\in \wt T$%\in E((\Ga\setminus T) \setminus P_{\pi}^{\wt T})\ne e$ 
\\ \hline
1.(a): $\beta_e$ with $e\in E(\Ga\setminus T\cup e_g) $ & 1 & 0
 
  \\ \hline
1.(b):$\beta_e$ with $e\in E((\Ga\setminus T) \setminus P_{\pi}^{\wt T})$ & 1 & 0 \\ \hline

1.(b):$\beta_e$ with  $e\in E(T)$ and $\tilde e^-\not \in E(\wt T)$ & 2& 0  \\ \hline

\end{tabular}
\end{center}

We will make extensive use of the set of anti-symmetric cycles $\beta_e$ and we give it a name. Moreover, we will prove in \cref{thm:basis} that they actually form a basis for the lattice $\mathrm{Im}(\Id-\iota_*)$.
    
\begin{defn}\label{def:basis}
    Let $\pi\colon \wt \Ga\to \Ga$ be a harmonic double cover of metric graphs and let $\wt T$ be an admissible spanning tree with respect to a spanning tree $T$ of $\Ga$. Construct the antisymmetric cycles $\beta_e$ as in Construction \ref{construction}.(a) and \ref{construction}.(b).
    We denote with 
    $E_{nz}\subset E(\Ga)$
the set of edges $e$ that give rise to non-zero cycles $\beta_e$. 
    We define
    \[
    \mathcal{B}_{\wt T}:=\{\beta_e\}_{e\in E_{nz}},
    \]
    where
    
    \begin{equation}\label{setofedgesfornonzerocycles}
    E_{nz}=
    \begin{cases}
       E(\Ga\setminus T\cup e_g)  & \text{in}\quad (1.a)  \\
            E((\Ga\setminus T) \setminus P_{\pi}^{\wt T})\cup E^{\wt T}_{\pi}  & \text{in}\quad (1.b)
    \end{cases}
  \end{equation}
    
     where  $E^{\wt T}_{\pi}$ denotes the set of edges $f\in E(T)$ for which $\tilde f^+\in E(\wt T)$ but  $\tilde f^-\not \in E(\wt T)$. By Equation \ref{eq1}, the total number of such edges is $n_d-1+k$ and we label the elements of $E^{\wt T}_{\pi}$ by $f_i$, for $i=1,\dots,n_d-1+k$.
\end{defn}

\begin{rmk}
    From \cref{lem:independent(a)}
and \cref{lem:independent(b)}, the cardinality of the set $ \mathcal{B}_{\wt T}$ is
exactly 
\begin{equation*}
|\mathcal{B}_{\wt T}|=
        b_1(\tGa)-b_1(\Ga).
\end{equation*}
%\MM{ma nel caso free la formula e' comunque sempre quella di sotto, giusto? La differenza è che la differenza dei generi non è più il genere di quella di sotto meno 1}\giusi{Yes, ho riscritto.}
Indeed, the formula $ b_1(\tGa)-b_1(\Ga)=b_1(\Ga)-m_d+n_d-1$ shows that, in the case of a free  double cover, when   $m_d=n_d=0$,  the number of such anti-symmetric cycles concides with $b_1(\Ga)-1$ as in \cref{lem:independent(a)}.
\end{rmk}
%\MM{non si capisce il senso di questo remark. Comincia per prendere un cammino qualunque nel albero del grafo di sotto e chiarisci che cosa intendi dimostrare nei diversi casi. Mi sembra inoltre che vuoi mostrare una cosa importante per la dimostrazione sotto, quindi mi sembra che sarebbe meglio enunciarlo in un lemma e dimostrare}\giusi{magari faccio un esempio e poi nel lemma lo dimostro.}

\begin{ex} \label{ex:uniquepath}
Let us consider the harmonic double cover in \cref{ex:dilatedpaths}.
 An  admissible spanning tree is given by  $\wt T=\{\tilde f_1^+,\tilde f_0^+,\tilde f_5^+,\tilde f_2^+,\tilde x_1^+,\tilde f_3^+,\tilde f_4^+, \tilde f_1^-,\tilde f_5^-,\tilde f_4^-\}$. % where the edges $\tilde f_i^-$   that are not contained  in $\wt T$ are highlighted in red.
Note that the choice of admissible spanning tree determines different paths in the tree $T$ that contain the edges $f_i\in E^{\wt T}_{\pi}$, with $E^{\wt T}_{\pi}$ the set of edges of the spanning tree with only one preimage in $\wt T$ as in \cref{def:basis}. 
By \cref{eq1}, the set $E^{\wt T}_{\pi}$ has cardinality $|E^{\wt T}_{\pi}|=n_d-1+k$. In this case there are $3$ 
edges $f_i$ such that $\tilde f_i^-\not\in \wt T$, that are highlighted in red in the picture.

Observe that, there are paths with endpoints in $V(\Ga_{\dil})\cup V(P^{\wt T}_{\pi})$ that contain more than one edge of $E^{\wt T}_{\pi}$.   For instance, the path $\{f_1,f_2,f_3\}$ between the two dilated vertices, contains $2$ edges of that type. Similarly, the path $\{f_5,f_0,f_2,x_1\}$ between one dilated edge and one vertex with a loop $x_1\in P^{\wt T}_{\pi}$, contains again two edges of $E^{\wt T}_{\pi}$. On the other hand, one  can also find paths in $T$,  that  contain only one of  the edges $f_i$, for $i=1,2,3$, namely

 \[
 Q_1=\{f_5,f_0,f_1\}
 \]
  \[
 Q_2=\{f_1,f_2\}
 \]
  \[
 Q_3=\{f_3\}.
 \]

\end{ex}

In the following Lemma, we show that there is a unique path in $T$  with endpoints in $V(\Ga_{\dil})\cup V(P^{\wt T}_{\pi})$, that contains exactly one edge $f_i\in E^{\wt T}_{\pi}$.
As observed in the above example, %\cref{ex:uniquepath}, 
certain paths may contain more than one edge from $E^{\wt T}_{\pi}$.

\begin{lem}\label{rmk:edgeofthesecondkind}
    Let $\pi$ be as in Construction \ref{1.b}.(b) and suppose $m_d=0$.
    Then, for each $f_i\in E^{\wt T}_{\pi}$, there exists a unique  path $Q_i$ in $T$ with endpoints in $V(\Ga_{\dil})$ or in $V(P^{\wt T}_{\pi})$
    such that $Q_i\cap E^{\wt T}_{\pi}=\{f_i\}$.

    %there exist  paths $Q_i$ in $T$ between dilated vertices $u_0^{(i)},v_0^{(i)}$ or vertices attached to loops in $P^{\wt T}_{\pi}$  containing exactly one edge $f_i$ such that $\tilde f_i^+\in \wt T$ but $\tilde f_i^-\not\in\wt T$, with $i=1,\dots,n_d-1+k$.
\end{lem}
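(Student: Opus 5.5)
We reduce the statement to a combinatorial fact about forests. Set $S:=V(\Ga_{\dil})\cup V(P^{\wt T}_{\pi})$. Since $m_d=0$, $\Ga_{\dil}$ consists of isolated dilated vertices, and no edge of $T$ is dilated. The key observation is that a vertex $v$ of $\Ga$ has $\tilde v^+$ and $\tilde v^-$ connected inside $\wt T\cap\pi^{-1}(T)^{-}$-augmented pieces exactly when $v\in S$. Concretely, we consider the subforest
\[
F:=T\setminus E^{\wt T}_{\pi},
\]
i.e. the edges $e\in T$ with both $\tilde e^\pm\in\wt T$. We then check that the components of $F$ are the "blocks" on which $\pi^{-1}(T)^-$ is glued to $\pi^{-1}(T)^+$.

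\textbf{Step 1: each component of $F$ contains exactly one vertex of $S$.} Let $C$ be a component of $F$. Its lift $C^-$ lies in $\wt T$ and is disjoint from $\pi^{-1}(T)^+$ except at the preimages of dilated vertices of $C$. It is also joined to $\pi^{-1}(T)^+$ through edges $\tilde x^+$ with $x\in P^{\wt T}_\pi$ attached at vertices of $C$.
\begin{itemize}
\item Since $\wt T$ is connected and every edge of $\wt T$ lies in $\pi^{-1}(T)\cup\pi^{-1}(P_\pi)^+$, the set $C^-$ must reach $\pi^{-1}(T)^+$. Hence $C\cap S\neq\emptyset$.
\item Conversely, if $C$ contained two vertices $s,t\in S$, the path in $C$ from $s$ to $t$ would lift to two paths, one in $C^+$ and one in $C^-$. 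Together with the connections at $s$ and $t$ (a common dilated preimage, or an edge $\tilde x^+$), these would form a cycle in $\wt T$, which is a contradiction.
\end{itemize}
We also use the count: the number of components of $F$ is $|E^{\wt T}_\pi|+1=n_d+k$, by \cref{eq1} with $m_d^T=0$. This equals $|S|$, which gives an alternative check of the bijection between components and points of $S$.

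\textbf{Step 2: existence.} Each $f_i\in E^{\wt T}_{\pi}$ joins two distinct components $C,C'$ of $F$, since $T$ is a tree. Let $s\in C\cap S$ and $s'\in C'\cap S$ be the unique points given by Step 1. The path $Q_i$ in $T$ from $s$ to the endpoint of $f_i$ in $C$, then across $f_i$, then to $s'$ inside $C'$, meets $E^{\wt T}_\pi$ only in $f_i$.

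\textbf{Step 3: uniqueness.} Suppose $Q$ is any path in $T$ with endpoints in $S$ and $Q\cap E^{\wt T}_\pi=\{f_i\}$. Removing $f_i$ splits $Q$ into two subpaths contained in $F$, each containing an endpoint of $f_i$. These subpaths therefore lie in $C$ and $C'$, and their other endpoints are points of $S$ in $C$ and $C'$. By Step 1 these endpoints are $s$ and $s'$. Since paths in a tree are unique, $Q=Q_i$.

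The main obstacle is Step 1, in particular making rigorous that $C^-$ connects to the $+$ sheet only via $S$. We handle this carefully using admissibility: $\wt T\subset\pi^{-1}(T)\cup\pi^{-1}(P_\pi)^+$. We also check separately the degenerate case in which the endpoints are the same vertex or $s$ lies on $f_i$.
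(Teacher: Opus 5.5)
Your proof is correct, and it takes a different route from the paper's. The paper treats the cases $(n_d,k)\in\{(2,0),(1,1),(0,2)\}$ by hand. In general it obtains $Q_i$ by projecting to $T$ the cycle in $\wt T\cup\{\tilde f_i^-\}$, i.e.\ the fundamental cycle $\wt\gamma^-_{f_i}$. It then argues that a second edge $f_j$ on $Q_i$ would contradict $\wt T$ being a tree; uniqueness is asserted there but not really argued. You instead show that each component of the forest $F=T\setminus E^{\wt T}_\pi$ contains exactly one vertex of $S=V(\Ga_{\dil})\cup V(P^{\wt T}_\pi)$: ``at least one'' comes from connectivity of $\wt T$ plus admissibility, and ``at most one'' from acyclicity. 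So contracting $F$ gives a tree with vertex set $S$ and edge set $E^{\wt T}_\pi$. Existence and uniqueness of $Q_i$ then both reduce to uniqueness of paths in a tree, with no case split.

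Your route gives a genuine uniqueness proof and a clean structural picture. The paper's route directly identifies $Q_i$ with the image of $\wt\gamma^-_{f_i}$, which is what Definition~\ref{orientedpath} uses to express $\beta_{f_i}$ through $\wt Q_i^{+}$ and $\iota\wt Q_i^{+}$. You recover this in one line: the minus-sheet edges of $\wt T$ are exactly the lifts of edges of $F$. Hence the fundamental cycle of $\tilde f_i^-$ stays in $C^-\cup (C')^-$ on the minus sheet and changes sheet exactly over $s$ and $s'$.

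Two small points to tighten.
\begin{enumerate}
\item In the ``at most one'' step, the $s$--$t$ path may pass through interior dilated vertices, so your closed walk is a closed trail rather than a simple cycle. That still suffices, since for $m_d=0$ the two lifts are edge-disjoint; alternatively, take $s,t$ consecutive in $S$ along the path.
\item Your cross-check $|S|=n_d+k$ needs two facts: $\pi$-parallel loops sit at non-dilated vertices, and no vertex carries two loops of $P^{\wt T}_\pi$ (two such edges $\tilde x^+$ would form a $2$-cycle in $\wt T$). Your two bullets already make this count unnecessary.
\end{enumerate}
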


\begin{proof}
  
From \cref{eq1}, the number of edges in $E^{\wt T}_{\pi}$, namely the edges $f\in T$ such that $\tilde f^+\in\wt T$ but $\tilde f^-\not\in\wt T$,
is $n_d-1+k.$
When $k=|P^{\wt T}_{\pi}|=0$ and $n_d=2$, then $|E^{\wt T}_{\pi}|=1$
and there exists a unique path $Q$ in $T$ between the two dilated points. 
%The preimage $\pi^{-1}(Q)$ in $\tGa$ contains a cycle 
The path $Q$ will contain exactly one 
 edge in $E^{\wt T}_{\pi}$. Indeed, the graph $\wt\Gamma$ will contain two copies of the path attached at their endpoints, that are dilated vertices, and the construction of the admissible spanning tree $\wt T$ involves removing a single edge from this cycle. 
The same thing holds in the cases $(n_d,k)=(1,1)$ and $(n_d,k)=(0,2)$; i.e., there is only one path $Q$ in $T$ containing the unique edge in $E^{\wt T}_{\pi}$.
%\MM{non capisco la tua dimostrazione di quest'ultima parte. Però credo che capisco quello che intendi dire: è possibile considerare cammini in $T$ le cui estremità siano in $V(\Ga_{\dil})$ o in $V(P^{\wt T}_{\pi})$ e che non contengano nessun altro vertice di quel tipo al suo interno} \giusi{in realtà, come nell'esempio in   \cref{ex:dilatedpaths}, il cammino $\{e_2, f_1,f_2\}$ è tra un vertice dilatato e uno che ha il $\pi$-parallel loop $x_1$ e non ha nessun altro all'interno. Però quello che voglio dire io è che questo cammino poi si biforca a destra e sinistra e quello a sinistra ne contiene solo uno. Ho riscritto:} 

Suppose now $n_d-1+k>1$,  between two endpoints in $V(\Ga_{\dil})\cup V(P^{\wt T}_{\pi})$ the total number of paths  in $T$ is $\binom{n_d+k}{2}$, and each path contains at least one edge $f_i\in E^{\wt T}_{\pi}$, otherwise its preimage would create a cycle in $\wt T$.
We claim that we can find a unique path that contains exactly one edge of $E^{\wt T}_{\pi}$.
By Equation (\ref{eq1}), the preimage $\pi^{-1}(T)$ contains
$n_d-1+k$ simple and independent cycles $\alpha_i$ and $\wt T$ is obtained by choosing an edge $\tilde f_i^{-}\in E(\pi^{-1}(T)^-)$ to remove from $\alpha_i$, for all $i=1,\dots,n_d-1+k$. The image of this cycle $\pi(\alpha_i)$ is contained in $T\cup P^{\wt T}_{\pi}$ and 
determines a path $Q_i$ in $T$, whose endpoints are in $V(\Ga_{\dil})\cup V(P_{\pi}^{\wt T})$. This path contains the unique edge $f_i\in T$ such that $\tilde f_i^-\not\in \wt T$, namely   $f_i\in E^{\wt T}_{\pi}$.  
Indeed, if there was another edge 
$f_j\ne f_i \in Q_i\cap E^{\wt T}_{\pi}$,
then the preimage $\pi^{-1}(Q_i)$ would be a simple cycle with two edges removed, and $\wt T$ would not be a spanning tree anymore. %Finally, the restriction $Q_i=P_i\cap T$ is the unique path in $T$ and the result follows.

%and let $v_0^{(i_1)},v_0^{(i_2)}$ be either in $V(\Ga_{\dil})$ or in $V(P^{\wt T}_{\pi})$. Denote by $P_i$ the path in $T$ between two such vertices. There are $\binom{n_d+k}{2}$ paths $P_i$, and each one contains at least one edge $f_i\in E^{\wt T}_{\pi}$, otherwise the preimage $\pi^{-1}(P_i)$ would contain a cycle.

%If $P_i\cap P_j=\emptyset$ for all $i\ne j$ then, as before, each disjoint path contains exactly one edge $f_i$ and we call it $Q_i$. Otherwise, suppose that there exist $i\ne j$ such that $P_i$ is the path with endpoints $v_0^{(i_1)},v_0^{(i_2)}$ and $P_i=v_0^{(j_1)}-v_0^{(j_2)}$ and$P_i\cap P_j\subset E(T)\ne \emptyset$.Consider the paths with endpoints$v_0^{(i_1)}-v_0^{(i_2)}$, $v_0^{(i_1)}-v_0^{(j_1)}$, $v_0^{(i_1)}-v_0^{(j_2)}$, $v_0^{(i_2)}-v_0^{(j_1)}$, $v_0^{(i_2)}-v_0^{(j_2)}$ and $v_0^{(j_1)}-v_0^{(j_2)}$. There are $6$ paths and $3$ edges of that type, therefore among these paths there are exactly $3$ that we call $Q_i$ containing only one $f_i\in E^{\wt T}_{\pi}$.

\end{proof}

  \begin{figure}[h]
    \centering
\begin{tikzpicture}

% radius (in em) used for shortening at circled vertices
\def\Rc{0.55em}

% ============================================================
%  TOP PANEL
% ============================================================
\begin{scope}[shift={(0,5.8)}]

\coordinate (L0) at (-4.4, 0.0);
\coordinate (L1) at (-3.0, 0.0);

\coordinate (u1p) at (-1.5, 0.55);
\coordinate (u2p) at (-0.6, 2.05);
\coordinate (T)   at ( 1.0, 0.55);
\coordinate (B)   at ( 1.0,-0.55);

\coordinate (u1m) at (-1.55,-0.55);
\coordinate (u2m) at (-0.55,-2.05);

\coordinate (R1) at ( 3.0, 0.00);
\coordinate (R2) at ( 4.28, 0.70);
\coordinate (R3) at ( 4.34,-0.40);

% outer boundary
\draw[thin, black] (0,0) ellipse (4.4 and 2.8);

% small double edge between L0 and L1
\draw[thin, black]
  (L0) .. controls (-3.85, 0.45) and (-3.55, 0.45) .. (L1)
       .. controls (-3.55,-0.45) and (-3.85,-0.45) .. (L0);

% internal thin edges (shortened to hit outer border of circled vertices)
\draw[thin, black, shorten <=\Rc] (L1) -- (u1p);
\draw[thin, black, shorten <=\Rc] (L1) -- (u1m);

\draw[thick, blue] (u1p) -- (u2p);
\draw[thin, black] (u2p) -- (T);

\draw[thin, black] (u1m) -- (B);
%\draw[thin, black] (B)   -- (T);

\draw[thin, black, shorten >=\Rc] (B) -- (R1);
\draw[thin, black, shorten >=\Rc] (T) -- (R1);
%\draw[thin, black, shorten <=\Rc] (R1) -- (R2);
%\draw[thin, black, shorten <=\Rc] (R1) -- (R3);

% lens: two thick curves + arrow on middle curve
\draw[thin, black]
  (B) .. controls (0.75,-0.10) and (0.75, 0.10) .. (T);
\draw[thick, blue]
  (B) .. controls (1.25,-0.10) and (1.25, 0.10) .. (T);

\draw[thin, black] (R2) to [out=180+60, in=90+30] (R3);

%\draw[thin, black,  postaction={decorate}, decoration={markings, mark=at position 0.55 with {\arrow{stealth}}}] (B) .. controls (1.00,-0.10) and (1.00, 0.10) .. (T);

% highlighted family 1 (blue)
\draw[thick, blue]
  (L0) .. controls (-3.1, 2.0) and (-2.0, 2.0) .. (u2p);
\draw[thick, blue]
  (L0) .. controls (-3.1,-1.4) and (-2.0,-2.2) .. (u2m);

\draw[thick, blue, shorten <=\Rc] (L1) -- (u1p);
\draw[thick, blue] (u1p) -- (T);
\draw[thick, blue, shorten >=\Rc] (T) -- (R1);
\draw[thick, blue, shorten <=\Rc] (R1) -- (R2);
\draw[thick, blue, shorten <=\Rc] (R1) -- (R3);
\draw[thick, blue, shorten <=\Rc] (L1) -- (u1m);

% highlighted family 2 (purple)
\draw[thick, purple] (u1m) -- (B);
\draw[thick, purple, shorten >=\Rc] (B) -- (R1);
\draw[thick, purple] (u1m) -- (u2m);
\draw[thin, black] (u2m) -- (B);
% filled nodes
\foreach \p in {u1p,u2p,T,B,u1m,u2m,R2,R3}
  \draw[fill=black] (\p) circle (0.15em);

% circled vertices
\draw[line width=1.5pt,black] (L0) circle (0.50em);
\draw[line width=1.5pt,black] (L1) circle (0.50em);
\draw[line width=1.5pt,black] (R1) circle (0.50em);

% numbers "1" inside empty circles
\node at (L0) {$1$};
\node at (L1) {$1$};
\node at (R1) {$1$};

% labels
\node[above]      at (u2p) {$\tilde u_2^{+}$};
\node[below ] at (u1p) {$\tilde u_1^{+}$};
\node[above ] at (u1m) {$\tilde u_1^{-}$};
\node[below]      at (u2m) {$\tilde u_2^{-}$};

\node[left]  at ($(u1m)!0.55!(u2m)$) {$\tilde f_0^-$};
\node[below]       at ($(u1m)!0.55!(B)$)   {$\tilde f_2^-$};
\node[below] at ($(B)!0.55!(R1)$)    {$\tilde f_3^-$};

\end{scope}

% arrow down between panels
\draw (0,2.5) node [below] {$\downarrow$};

% ============================================================
%  BOTTOM PANEL
% ============================================================
\begin{scope}[shift={(0,0)}]

\coordinate (A)  at (-4.4,0.0);
\coordinate (A2) at (-3.0,0.0);
\coordinate (U1) at (-1.4,0.0);
\coordinate (C)  at ( 1.0,0.0);
\coordinate (R)  at ( 3.0,0.0);
\coordinate (E)  at ( 4.4,0.0);

\coordinate (U2) at (-1.4,-1.35);

% outer boundary: lower half ellipse
\draw[thin, black] (E) arc[start angle=0, end angle=-180, x radius=4.4, y radius=2.1];

% baseline: split into 3 pieces so it does NOT pass through circled vertices (A2 and R)
%\draw[thin, black, shorten <=\Rc] (A) -- (A2);
%\draw[thin, black, shorten <=\Rc, shorten >=\Rc] (A2) -- (R);
%\draw[thin, black, shorten >=\Rc] (R) -- (E);

\draw[thin, black, shorten <=\Rc] (A) -- (-3.2,0);
\draw[thick, blue, shorten <=\Rc] (-3,0) -- (2.8,0);
\draw[thick, blue, shorten <=\Rc] (R) -- (E);

% left triangle

\draw[thick, blue] (U1) -- (U2);
\draw[thin, black] (C) -- (U2);
\draw[thick, blue, shorten <=\Rc] (A) -- (U2);
%\draw[thin, black] (U2) -- (C);

% highlighted baseline segments (already shortened where needed)
%\draw[thick, blue, shorten >=\Rc] (A2) -- (U1);
%\draw[thick, blue]               (U1) -- (C);
%\draw[thick, blue, shorten <=\Rc] (C)  -- (R);

% add the missing side to close the triangle containing e_0^{(2)}
%\draw[thin, black] (U1) -- (C);

% loop x1 at C: close to a circle
\coordinate (Ptop) at ($(C)+(0.0,0.45)$);
;
\node[above] at ($(C)+(0.0,0.9)$) {$x_1$};
\draw[black] (Ptop)  circle (1.25em);
\draw[black] (4.4,0.45)  circle (1.25em);
\node[above] at ($(E)+(0.0,0.9)$) {$x_2$};
% nodes
\foreach \p in {U1,C,U2,E}
  \draw[fill=black] (\p) circle (0.15em);

% circled vertices
\draw[line width=1.5pt,black] (A)  circle (0.50em);
\draw[line width=1.5pt,black] (A2) circle (0.50em);
\draw[line width=1.5pt,black] (R)  circle (0.50em);

% numbers "1" inside empty circles
\node at (A) {$1$};
\node at (A2) {$1$};
\node at (R) {$1$};

% labels
\node[above] at (U1) {$u_1$};
\node[below] at (U2) {$u_2$};

\node[above] at ($(A2)!0.55!(U1)$) {$f_1$};

\node[above] at ($(R)!0.55!(E)$) {$f_4$};

\node[left]  at ($(U1)!0.55!(U2)$) {$f_0$};
\node[above] at ($(U1)!0.55!(C)$)  {$f_2$};
\node[above] at ($(C)!0.55!(R)$)   {$f_3$};
\node[below left] at ($(A)!0.55!(U2)$) {$f_5$};

\end{scope}

\end{tikzpicture}
 \caption{A harmonic double cover with $n_d=3$ and $k=1$.}
    \label{ex:dilatedpaths}
\end{figure}

\subsection{The set \texorpdfstring{$\mathcal{B}_{\wt T}$}{B{T}} generates the lattice \texorpdfstring{$\Im (\Id-\iota_*)$}{Im(Id-iota*)}}

Before proving that the previous set of independent anti-symmetric cycles is a basis for the lattice $\Im (\Id-\iota_*)$, let us illustrate with an example how to write any antisymmetric cycle in terms of the elements in $\mathcal{B}_{\wt T}$.

\begin{ex} Consider the harmonic double cover with two isolated dilated vertices in
\cref{ex:graphwithbasis}. Choose the spanning tree $T=\{f_2,f_1,f_0,f_3\}$ highlighted in blue and construct the admissible spanning tree $\wt T$ where $\tilde f_0^-\not \in\wt T$.
Consider an orientation on $\Ga$ and the induced orientation on $\tGa$. 
Since the cycles $\beta_e\in\mathcal B_{\wt T}$ are of the form $\tilde \ga_{e_i}^--\iota\tilde \ga_{e_i}^-$, let us prove that the remaining anti-symmetric cycles, namely $\tilde \ga_{e_i}^+-\iota\tilde \ga_{e_i}^+$, can be obtained as $\ZZ$-linear combination of those.
Consider first the edge $\tilde e_2^+\in E(\tGa\setminus\wt T)$. The cycle $\tilde \ga_{e_2}^+$ is contained in $\pi^{-1}(T)^+$ and it is $\tilde \ga_{e_2}^+=\tilde e_2^+-\tilde f_1^+$. Note that, in this case, $\iota\tilde \ga_{e_2}^+=\tilde e_2^--\tilde f_1^-=\tilde \ga_{e_2}^-$. Therefore, the anti-symmetric cycle 
\[\tilde \ga_{e_2}^+-\iota\tilde \ga_{e_2}^+=-(\tilde \ga_{e_2}^--\iota\tilde \ga_{e_2}^-)=-\beta_{e_2},\] which is an element in the span of $\mathcal B_{\wt T}$, since $\beta_{e_2}\in \mathcal B_{\wt T}$. 
A similar situation happens when considering the cycle $\tilde \ga_{e_4}^+$. It corresponds to $\tilde \ga_{e_4}^+=\tilde e_4^++\tilde f_3^--\tilde f_3^+$ and contains both the preimages of $f_3\in T$. Therefore,  $\iota\tilde \ga_{e_4}^+=\tilde e_4^-+\tilde f_3^+-\tilde f_3^-$
which in turn is equal to $\tilde\ga_{e_4}^-=\tilde e_4^-+\tilde f_3^+-\tilde f_3^-$. Thus, 

\[\tilde \ga_{e_4}^+-\iota\tilde \ga_{e_4}^+=-(\tilde \ga_{e_4}^--\iota\tilde \ga_{e_4}^-)=-\beta_{e_4}.\]

Consider now the edge $\tilde e_1^+$.
The cycle $\tilde\ga_{e_1}^+$ corresponds to
$
\tilde\ga_{e_1}^+=\tilde e_1^++\tilde f_3^--\tilde f_3^+-\tilde f_0^+-\tilde f_1^+,$
while the cycle $\tilde\ga_{e_1}^-=\tilde e_1^--\tilde f_0^+-\tilde f_1^+-\tilde f_2^++\tilde f_2^-$.
Therefore,

\begin{equation*}
    \begin{split}
        \tilde\ga_{e_1}^+-\iota \tilde\ga_{e_1}^+&=\tilde e_1^++\tilde f_3^--\tilde f_3^+-\tilde f_0^+-\tilde f_1^+-\tilde e_1^--\tilde f_3^++\tilde f_3^-+\tilde f_0^-+\tilde f_1^-,
    \end{split}
\end{equation*}

while

\[
\tilde\ga_{e_1}^--\iota\tilde\ga_{e_1}^-=\tilde e_1^--\tilde f_0^+-\tilde f_1^+-\tilde f_2^++\tilde f_2^--\tilde e_1^++\tilde f_0^-+\tilde f_1^-+\tilde f_2^--\tilde f_2^+,
\]

and the anti-symmetric cycle associated to the edge $\tilde f_0^- $ is 
\[\beta_{f_0}=2\tilde\ga_{f_0}=2\tilde f_0^-+2\tilde f_3^--2\tilde f_3^+-2\tilde f_0^+-2\tilde f_1^+-2\tilde f_2^++2\tilde f_2^-+2\tilde f_1^-.\] 

By putting everything together, the cycle $\tilde\ga_{e_1}^+-\iota \tilde\ga_{e_1}^+$ can be written as

\[\tilde\ga_{e_1}^+-\iota \tilde\ga_{e_1}^+=\beta_{f_0}-(\tilde \ga_{e_1}^--\iota\tilde \ga_{e_1}^-) =\beta_{f_0}-\beta_{e_1},\] and again both $\beta_{f_0}$ and $\beta_{e_1}$ belong to $\mathcal B_{\wt T}$.
Finally, let us consider the edge $\tilde e_3^+.$ The cycle $\tga_{e_3}^+=\tilde e_3^+-\tilde f_0^+$ and $\iota \tga_{e_3}^+=\tilde e_3^--\tilde f_0^-$. Moreover,  
\[\tga_{e_3}^-=\tilde e_3^-+\tilde f_3^--\tilde f_3^+-\tilde f_0^+-\tilde f_1^+-\tilde f_2^++\tilde f_2^-+\tilde f_1^-.\] 
Therefore, the anti-symmetric cycle is
\[\tga_{e_3}^--\iota\tga_{e_3}^-=\tilde e_3^--\tilde e_3^++2\tilde f_3^--2\tilde f_3^+-\tilde f_0^++\tilde f_0^--2\tilde f_1^+-2\tilde f_2^++2\tilde f_2^-+2\tilde f_1^-,\]

and
\[
\beta_{f_0}-(\tga_{e_3}^--\iota\tga_{e_3}^-)=   \tilde e_3^+-\tilde e_3^-+\tilde f_0^--\tilde f_0^+=\tga_{e_3}^+-\iota\tga_{e_3}^+.
\]

We have written every anti-symmetric cycle as linear combination of cycles  $\beta_e\in\mathcal B_{\wt T}$.

Note that, if instead of $\wt T$ we choose $\wt T'=\wt T\setminus\{\tilde f_3\}\cup \tilde e^+_4$, then $\tilde \ga_{e_1}^+$ does not pass through the edges $\tilde f_3^{\pm}$ anymore, same for $\beta_{f_0}$. Then,

\begin{equation*}
    \begin{split}
        \tilde \ga_{e_1}^+-\iota\tilde \ga_{e_1}^+&=\tilde e_1^+-\tilde e_4^+-\tilde f_0^+-\tilde f_1^+-\tilde e_1^-+\tilde e_4^-+\tilde f_0^-+\tilde f_1^-,\\
\beta_{f_0}&=2\tilde f_0^--\tilde e_4^++\tilde e_4^--2\tilde f_0^+-2\tilde f_1^+-2\tilde f_2^++2\tilde f_2^-+2\tilde f_1^-,\\
\beta_{e_1}&=\tilde e_1^--\tilde f_0^+-\tilde f_1^+-\tilde f_2^++\tilde f_2^--\tilde e_1^++\tilde f_0^-+\tilde f_1^-+\tilde f_2^--\tilde f_2^+
    \end{split}
\end{equation*}

Nonetheless, we obtain again 
$
\beta_{f_0}-\beta_{e_1}= \tilde \ga_{e_1}^+-\iota\tilde \ga_{e_1}^+.
$
Similarly for the cycle $\tilde \ga_{e_3}^+$.
\begin{figure}[H]
    \centering
\begin{tikzpicture}
%primo vertice a sx
%\coordinate (-1) at (-2,0);
%\coordinate (-1a) at (-2,2);
%\coordinate (-1b) at (-2,3);
\coordinate (0) at (-1,0);
\coordinate (0a) at (-1,2.5+0.5);

\coordinate (1) at (0,0); %leftvertex
 \coordinate (a1) at (0,2); %preimage - of the vertex (1)
\coordinate (b1) at (0,4); %preimage + of the vertex (1)
\coordinate (2) at (1+1,0); %rightvertex
 \coordinate (a2) at (1+1,2); %preimage - of the vertex (2)
 \coordinate (b2) at (1+1,4); %preimage + of the vertex (2)
 
 \coordinate (12) at (1,0);
 \coordinate (12a) at (1,2);
 \coordinate (12b) at (1,4);

 \coordinate (3) at (2+1,0);
 \coordinate (a3) at (2+1,2.5+0.5);

 \coordinate (4) at (2.2+1,0);
  \coordinate (a4) at (2.2+1,2.5+0.5);
  
 \coordinate (c1) at (1.4-0.5,2.4+0.5); 
 \coordinate (c2) at (1.6-0.5,2.6+0.5);
 %contruction of the edge in the middle: c1 and c2 are the middle points so that the segment is cutted in half

 \foreach \i in {2,12,12a,12b,1,a2,a1,b2,b1}
 \draw[fill=black](\i) circle (0.15em);

\draw[line width=1.5pt,black](0) circle (0.50em);
\draw[line width=1.5pt,black](0a) circle (0.50em);
\draw[line width=1.5pt,black](4) circle (0.50em);
\draw[line width=1.5pt,black](a4) circle (0.50em);
%\draw[thin,black](-1) circle (0.50em);
%\draw[thin,black](-1a) circle (0.50em);
%\draw[thin,black](-1b) circle (0.50em);

\draw[thin, black] (-0.8,0)--(1);
\draw[thin, black] (2)--(1);
\draw[thin, black] (a2)--(a1);
\draw[thick, blue] (b2)--(b1);
%\draw[thin, black] (-1.2,0)--(-1.8,0);
 \draw[thin, black] (1) to [out=90, in=90] (12);
  \draw[thin, black] (12) to [out=90, in=90] (2);
\draw[thin, black] (1) to [out=90+180, in=-90] (2);

%loop

 \draw[thin, black] (2) to [out=-90, in=180+45] (2.4,-0.5);
  \draw[thin, black] (2) to [out=0, in=45] (2.4,-0.5);

  %preimm loop
  \draw[thin, black] (b2) to [out=-60, in=60] (a2);
  \draw[thin, black] (b2) to [out=180+60, in=90+30] (a2);

\draw[thin, black] (b1) to [out=90, in=90] (12b);
\draw[thin, black] (12b) to [out=90, in=90] (b2);
\draw[thin, black] (12a) to [out=90+180, in=-90] (a2);
\draw[thin, black] (a1) to [out=90+180, in=-90] (12a);

\draw[thick, blue] (3,0)--(-0.8,0);

\draw[thick, blue] (a1)--(-0.8,3);
\draw[thick, blue] (a1)--(12a);
\draw[thick, blue] (b1)--(-0.8,3);
  \draw[thin, black] (b1)--(a2);
\draw[thin, black] (a1)--(c1);
    \draw[thick, blue] (a3)--(a2);
    \draw[thick, blue] (a3)--(b2);
    \draw[thick, blue] (2)--(3);
    \draw[thin, black] (b2)--(c2);

  \draw (1,1.3) node [below] {$\downarrow$};
    \draw (0) node  {$1$};
     \draw (0a) node  {$1$};
  \draw (4) node  {$1$};
   \draw (a4) node  {$1$};
    \draw (1,-0.8) node  {$e_1$};
     \draw (-0.5,-0.2) node  {$f_2$};
     \draw (2.7,-0.2) node  {$f_3$};
      \draw (2.5,-0.7) node  {$e_4$};
    \draw (1.5,-0.2) node  {$f_0$};
    \draw (1.5,0.5) node  {$e_3$};
    \draw (0.5,-0.2) node  {$f_1$};
    \draw (0.5,0.5) node  {$e_2$};
      \draw (0.45,3.2) node  {$\tilde e_1^+$};
        \draw (0.5,2.8) node  {$\tilde e_1^-$};

         \draw (-0.5,3.7) node  {$\tilde f_2^+$};
        \draw (-0.5,2.3) node  {$\tilde f_2^-$};
        \draw (2.7,3.7) node  {$\tilde f_3^+$};
        \draw (2.7,2.3) node  {$\tilde f_3^-$};
 \draw (2,3.5) node  {$\tilde e_4^+$};
        \draw (2,2.5) node  {$\tilde e_4^-$};
        
    \draw (1.4,3.8) node  {$\tilde f_0^+$};
    \draw (1.4,3.8+0.8) node  {$\tilde e_3^+$};
     \draw (1.4,2.2) node  {$\tilde f_0^-$};
     \draw (1.4,2.2-0.75) node  {$\tilde e_3^-$};
    \draw (0.5,3.8) node  {$\tilde f_1^+$};
     \draw (0.5,3.8+0.8) node  {$\tilde e_2^+$};
     \draw (0.5,2.2) node  {$\tilde f_1^-$};
 \draw (0.5,2.2-0.75) node  {$\tilde e_2^-$};
  \end{tikzpicture}
 \caption{A harmonic double cover and an admissible spanning  tree $\wt T$ highlighted in blue.}
    \label{ex:graphwithbasis}
\end{figure}
\end{ex}

Since we are dealing with sets of oriented edges, let us introduce the following notation.
\begin{defn}\label{orientedpath}
    Let $G$ be a finite graph endowed with an orientation and let $T$ be a spanning tree of the graph. Then, any edge $e_i=u_iv_i\in E(G\setminus T)$ with $u_i,v_i\in V(G)$ determines a unique path in $T$ between the endpoints $u_i$ and $v_i$.  Construct the cycle $\ga_{e_i}=T\cup e_i$ that satisfies $\langle \ga_{e_i},e_i\rangle=1$.
    Define $\underrightarrow{P_i}$ to be the \emph{oriented path} in $T$ such that $\ga_{e_i}=\underrightarrow{P_i}+e_i$. And denote by $P_i$ the set of edges appearing in the oriented path $\underrightarrow{P_i}$ without orientation. Analogously, denote by $\underrightarrow{\wt P_i^{+}}$ and $ \underrightarrow{\iota\wt P_i^{+}}$ the two preimages of the path $\underrightarrow{P_i}$ in $\tGa$.

 Furthermore, from \cref{rmk:edgeofthesecondkind}, for each  edge $f_i\in E^{\wt T}_{\pi}$, let $\underrightarrow{Q_i}$
  be the unique oriented path in $T$    with endpoints in $V(\Ga_{\dil})\cup V(P^{\wt T}_{\pi})$ that contains $f_i$. Define
 $\underrightarrow{\wt Q_i^+}:=\pi^{-1}(\underrightarrow{Q_i})^+$ and $\underrightarrow{\iota \wt Q_i^+}=\pi^{-1}(\underrightarrow{Q_i})^-$   to be the unique oriented paths in $\wt \Gamma$ that contain respectively the edges $\tilde f_i^{\pm}$ such that  $f_i\in E^{\wt T}_{\pi}$. Recall that the anti-symmetric cycle $\beta_{f_i}=\tga_{f_i}^--\iota\tga_{f_i}^-$ intersects positively the edge $\tilde f_i^-$, which belongs to $\iota \wt Q_i^+$.
   With this notation, the cycle $\beta_{f_i}$ can be written as follows 
   \begin{equation}
   \beta_{f_i}=
       \begin{cases}
           2\underrightarrow{\iota \wt Q_i^+}-2\underrightarrow{ \wt Q_i^+} & \text{if }  Q_i\cap V(P^{\wt T}_{\pi})=\emptyset,\\
            2\underrightarrow{\iota \wt Q_i^+}-2\underrightarrow{ \wt Q_i^+}+\tilde x_1^+-\tilde x_1^- & \text{if }Q_i\cap V(P^{\wt T}_{\pi})=\{v_1\},\\
            2\underrightarrow{\iota \wt Q_i^+}-2\underrightarrow{ \wt Q_i^+}+\tilde x_1^+-\tilde x_1^-+\tilde x_2^+-\tilde x_2^- &\text{if } Q_i\cap V(P^{\wt T}_{\pi})=\{v_1,v_2\}.
       \end{cases}
   \end{equation}
   where $v_1,v_2$ are the vertices whose loop attached is $x_1,x_2$. 
    %\MM{ma ti servono questi cammini solo a partire da lati di $G\setminus T$ del grafo di sotto?}\giusi{No, ho aggiunto anche  sui cammini $Q_i$ all'interno dell'albero. }
\end{defn}

We are now ready to prove that the set $\mathcal B_{\wt T}$ is a basis for the lattice  $ \Im (\Id-\iota_*)$.

\begin{rmk}\label{rmk:basis}

Let $T$ be a spanning tree for $\Ga$ and $\wt T$ be an admissible   spanning tree for    $\tGa$.
   We have already seen that, for any choice of $\wt T$, the cycles $\beta_e\in\mathcal B_{\wt T}$ are all independent in Lemma \ref{lem:independent(a)} and Lemma \ref{lem:independent(b)},
  therefore,
  in order to prove that the set $\mathcal B_{\wt T}$ is a basis for the lattice  $ \Im (\Id-\iota_*)$,  
  we need to show  that any cycle in the lattice  $ \Im (\Id-\iota_*)$ is a $\ZZ$-linear combination of the elements in $\mathcal{B}_{\wt T}$ for any  choice of admissible tree $\wt T$. By definition, any cycle in  $ \Im (\Id-\iota_*)$ comes from a cycle in $H_1(\tGa,\ZZ)$ under the homomorphism \[\Id-\iota_*\colon H_1(\tGa,\ZZ)\to H_1(\tGa,\ZZ).\]

Assign an orientation on $\Ga$ and consider the induced orientation on $\tGa$. 
 Let
    $\mathcal B=\{\tga_i\}_{i=1,\dots,b_1(\tGa)}$ be a basis for $H_1(\tGa,\ZZ)$, where each $\tga_i$ corresponds to the smallest  cycle in $\wt T\cup \{\tilde e_i\}$, with $\tilde e_i\in E(\wt\Ga\setminus \wt T)$.
    Now let $\alpha\in H_1(\tGa,\ZZ)$ be any cycle. Then, $\alpha=\sum_{i=1}^{b_1(\tGa)}a_i\tilde\gamma_{i}$, with $a_i\in\ZZ$.  
    Applying the homomorphism $\Id-\iota_*$ to the cycle $\alpha$, we get
    \[\Id-\iota_*(\alpha)=\sum_{i=1}^{b_1(\tGa)}a_i(\tilde\gamma_{i}-\iota \tilde\gamma_{i}).
    \]

 Since $\tGa=\pi^{-1}(\Ga)$, any %cycle $\tga_i\in H_1(\tGa,\ZZ)$ intersects some edge $\tilde e_i^+$ or $\tilde e_i^-$, in the complement of the spanning tree $\wt T$, with $e_i\in E(\Ga\setminus T)$,
  edge $\tilde e_i\in E(\tGa\setminus \wt T) $ is of the form $\tilde e_i^+$ or $\tilde e_i^-$ where $\pi(\tilde e_i^{\pm})=e_i\in E(\Ga\setminus T)$. Therefore, the cycle $\tga_i$
corresponds to either $\tga_{e_i}^+$ or $\tga_{e_i}^-$ as in \cref{construction}.
 Moreover, since each $\beta_e\in \mathcal{B}_{\wt T}$ is such that $\beta_e=\tilde\gamma_e^{-}-\iota \tilde\gamma_e^{-}$ with $e\in E_{nz}$, it suffices to show that any cycle $\tilde\gamma_{e_i}^{+}-\iota \tilde\gamma_{e_i}^{+}$, with $e_i\in E(\Ga\setminus T)$, can be expressed as linear combination of such anti-symmetric cycles $\beta_e$ with $e\in E_{nz}$.
   Furthermore, without loss of generality, we may assume that the dilation subgraph consists only of isolated dilated vertices, and we denote by $n_d$ their total number. Indeed, if 
a cycle $\tilde\ga_{e_i}^+$ intersects some dilated edge $\tilde d_i$, where $\tilde d_i=\tilde d_i^+=\tilde d_i^-$, then $\iota\tilde \ga_{e_i}^+$ also contains $\iota(\tilde d_i)=\tilde d_i$. In the difference $\tilde\ga_{e_i}^+-\iota\tilde \ga_{e_i}^+$, the two edges appear with opposite orientation, yielding $\tilde d_i-\tilde d_i=0$. Consequently,  the anti-symmetric cycle
$\tilde\ga_{e_i}^+-\iota\tilde \ga_{e_i}^+$
contains only non-dilated edges, and it is sufficient to prove that those edges
can be obtained by a $\ZZ$-linear combination of the elements of $\mathcal B_{\wt T}$. 
\end{rmk}

\begin{thm}\label{thm:basis}
     Let $\pi\colon\tGa\to\Ga$ and $\wt T$ and $T$ be as above. Then $\mathcal{B}_{\wt T}$ is a basis for the lattice $\Im (\Id-\iota_*)$ for any choice of admissible tree $\wt T$.
\end{thm}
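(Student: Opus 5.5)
By \cref{lem:independent(a)} and \cref{lem:independent(b)} the elements of $\mathcal B_{\wt T}$ are linearly independent and lie in $\Im(\Id-\iota_*)$. It therefore suffices to prove generation, following the reduction in \cref{rmk:basis}. Since $H_1(\tGa,\ZZ)$ has a basis of fundamental cycles $\tga_{e}^{\pm}$ attached to the edges of $\tGa\setminus\wt T$, the lattice $\Im(\Id-\iota_*)$ is spanned by the elements $\tga_e^{\pm}-\iota\tga_e^{\pm}$. The elements with sign $-$ are precisely the $\beta_e$; those with $e\notin E_{nz}$ vanish, and the $\beta_f$ with $f\in E^{\wt T}_\pi$ come from tree edges. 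So I only need to show that each $\tga_{e}^{+}-\iota\tga_{e}^{+}$, for $e\in E(\Ga\setminus T)$ with $\tilde e^+\notin\wt T$, lies in the $\ZZ$-span of $\mathcal B_{\wt T}$. Following the remark, I contract the dilated edges, so that $\Ga_{\dil}$ consists of isolated vertices. Case (a) I handle directly: there $\wt T\supset\pi^{-1}(T)$, so $\tga_e^{+}=\iota\tga_e^{-}$ for $e\neq e_g$, and for $e_g$ the cycle is symmetric. Hence $\tga_e^+-\iota\tga_e^+=-\beta_e$.

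For case (b) the key identity I aim for is
\[
\tga_e^{+}-\iota\tga_e^{+}=-\beta_e+\sum_{f_i\in E^{\wt T}_\pi\cap P_e}\epsilon_i\,\beta_{f_i},\qquad \epsilon_i\in\{0,\pm1\},
\]
where $P_e$ is the tree path of \cref{orientedpath}. This is what the worked example suggests, since there one finds $\beta_{f_0}-\beta_{e_1}$ and $\beta_{f_0}-\beta_{e_3}$. To prove it, I write both $\tga_e^{+}$ and $\tga_e^{-}$ explicitly. The lift of $\underrightarrow{P_e}$ starting at the endpoint of $\tilde e^{\pm}$ runs through $\wt T$. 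Each time it meets an edge $f_i\in E^{\wt T}_\pi$ whose lower copy is missing, it must detour through the upper sheet along $\wt Q_i^{+}$ and back along $\iota\wt Q_i^{+}$, passing through a dilated vertex or through a loop $\tilde x^+$. Consequently the two lifts $\tga_e^{+}$ and $\iota\tga_e^{-}$ differ exactly by a sum of closed detours of the form $\pm(\underrightarrow{\iota\wt Q_i^+}-\underrightarrow{\wt Q_i^+}+\ldots)$. After antisymmetrising, each such detour becomes the expression of $\beta_{f_i}$ given in \cref{orientedpath}, coefficients of $2$ included. This explains why the index-$2$ cycles $\beta_{f_i}=2\tga_{f_i}^-$ appear with coefficient $\pm1$ rather than $\pm\frac12$.

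I would organise the argument by induction on the number of edges of $E^{\wt T}_\pi$ lying on $P_e$. If this number is zero, then $\tga_e^+$ lies in $\pi^{-1}(T)^+\cup\tilde e^+$ up to sheet swaps, and $\iota\tga_e^{+}=\tga_e^{-}$, which gives $-\beta_e$ (or $0$ if $e\in P^{\wt T}_\pi$). For the inductive step I use the uniqueness statement of \cref{rmk:edgeofthesecondkind}. Each $f_i$ on the path determines a unique $Q_i$ containing it and no other element of $E^{\wt T}_\pi$. Adding or subtracting $\beta_{f_i}$ therefore swaps the sheet of the segment $Q_i$ in the difference $(\tga_e^+-\iota\tga_e^+)+\beta_e$, and this reduces the count by one.

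The main obstacle is the bookkeeping in this inductive step. I have to check that the correction is exactly $\pm\beta_{f_i}$, including the $\tilde x^\pm$ terms contributed by $\pi$-parallel loops, and that no half-multiples of the $2\tga$-type cycles are ever needed. My plan is to verify that the coefficient of $\tilde f_i^-$ in $(\tga_e^+-\iota\tga_e^+)+\beta_e$ is $0$ or $\pm2$, and never odd. Comparing with the pairing table (coefficient $2$ on $\tilde f_i^-$ for $\beta_{f_i}$, and $1$ for $\beta_e$ with $e\notin T$) then forces integrality. Finally, I would show that once all $\tilde f_i^-$ and $\tilde e^-$ coefficients are cleared, the remaining antisymmetric cycle is supported on $\wt T\cup\iota\wt T$ minus these edges, which is a forest, and hence is zero.
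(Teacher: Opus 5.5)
Your route is the paper's: reduce via Remark~\ref{rmk:basis} to the elements $\tga_e^+-\iota\tga_e^+$, handle case (a) directly, and in case (b) show $\tga_e^+-\iota\tga_e^+=-\beta_e+\sum_i\epsilon_i\beta_{f_i}$. Your closing mechanism is tidier than the paper's path-by-path bookkeeping: clear the $\tilde e^-$- and $\tilde f_i^-$-coordinates using the triangular pairing (entries $1$ and $2$), then check that no nonzero antisymmetric cycle remains. There are, however, two genuine gaps.

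\textbf{The reduction to isolated dilated vertices fails.} You claim that $\tga_e-\iota\tga_e$ vanishes for dilated $e\notin T$. This only holds when the $T$-path between the endpoints of $e$ is itself dilated; otherwise contracting the dilated edges does not keep $T$ a tree. Here is a counterexample.
\begin{itemize}
\item Let $\Ga$ have vertices $u,v$ joined by dilated edges $a,b$ and a free edge $f$, all oriented $u\to v$, with $w=\tilde w=0$. Take $T=\{f\}$; then $\wt T=\{\tilde f^+\}$ is forced and is admissible.
\item The element $\tga_a-\iota\tga_a=\tilde f^--\tilde f^+$ generates $\Im(\Id-\iota_*)$.
\item But $\mathcal B_{\wt T}=\{\beta_f\}$ with $\beta_f=2(\tilde f^--\tilde f^+)$. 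Adding $\beta_a=\beta_b$ to it would instead break independence.
\end{itemize}
So the statement is false for this $\wt T$, and Remark~\ref{rmk:basis} has the same gap. You need the extra hypothesis that $T$ restricts to a spanning tree on each component of $\Ga_{\dil}$. With it, contracting the forest $\pi^{-1}(T\cap\Ga_{\dil})\subseteq\wt T$ is an $\iota$-equivariant homotopy equivalence. The remaining dilated edges become loops with $\iota$-invariant fundamental cycles, and you are legitimately reduced to isolated dilated vertices.

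\textbf{The evenness step is asserted, not proved.} The evenness of the $\tilde f_i^-$-coefficient of $(\tga_e^+-\iota\tga_e^+)+\beta_e$ is the whole content of the theorem; the paper likewise only asserts $\langle\tga_e^-,\tilde f_i^+\rangle=\langle\tga_e^+,\tilde f_i^+\rangle$. That coefficient equals $-\langle\tga_e^+,\tilde f_i^+\rangle-\langle\tga_e^-,\tilde f_i^+\rangle$, and a cut argument shows it is even.
\begin{itemize}
\item After the reduction, every component $K$ of $T\setminus E^{\wt T}_\pi$ contains exactly one vertex $s\in V(\Ga_{\dil})\cup V(P^{\wt T}_\pi)$. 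To see this, collapse $\pi^{-1}(T)^+$ to a point and use that $\wt T$ is a tree.
\item Hence for every vertex $w$, the two lifts of $w$ coincide or are joined in $\wt T\setminus\tilde f_i^+$. The joining path runs along the $+$-lift of $K$ to $s$, crosses over (at $\tilde s$ or via $\tilde x^+$), and returns along the $-$-lift.
\item So both lifts of $u$ lie on one side of $\tilde f_i^+$, and both lifts of $v$ lie on one side. For $e$ oriented $u\to v$, the closing paths of $\tga_e^+$ and $\tga_e^-$ both run from the $v$-side to the $u$-side. They therefore cross $\tilde f_i^+$ identically, and the two coefficients are equal.
\item The same argument gives $\langle\beta_{f_i},\tilde f_j^-\rangle=0$ for $j\neq i$, which your triangularity needs.
\end{itemize}
One smaller correction: the leftover support $(\wt T\setminus\{\tilde f_i^+\}_i)\cup\{\tilde x^-\}$ is not a forest when $k\geq 1$. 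Its only cycles are the $\iota$-invariant ones $\tilde x^++\tilde x^-$, so an antisymmetric cycle supported there is still $0$.
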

\begin{proof}
 By \cref{rmk:basis}, it is sufficient to prove that  any anti-symmetric cycle $\tilde\gamma_{e_i}^{+}-\iota \tilde\gamma_{e_i}^{+}$, with $e_i\in E(\Ga\setminus T)$, can be expressed as linear combination of $\beta_e$ with $e\in E_{nz}$.  
Let us distinguish \cref{1.a}.(a) from \cref{1.b}.(b):

\begin{itemize}
    \item[(1.a)] 
The double cover $\pi$ is free with $P^{\wt T}_{\pi}=\emptyset$ as in \cref{construction}.(a),

\item[(1.b)] Either the double cover $\pi$ is free with $P_{\pi}^{\wt T}\ne\emptyset$ or $\pi$ is dilated with $n_d\geq 1$  and  $|P^{\wt T}_{\pi}|=k\geq 0$,  as in \cref{construction}.(b).

\end{itemize}

In case $(1.a)$, the admissible tree is $\wt T=\pi^{-1}(T)\cup \pi^{-1}(T)^-\cup \{\tilde e_g^+\}$, where $e_g\in E(\Ga\setminus T)$. Any cycle $\tilde\ga_{e_i}^+$ satisfies $\iota\tilde\ga_{e_i}^+=\tilde\ga_{e_i}^-$; thus 
$\tilde\ga_{e_i}^+-\iota\tilde\ga_{e_i}^+=-(\tilde\ga_{e_i}^--\iota\tilde\ga_{e_i}^-)$
and $\mathcal{B}_{\wt T}$ is a basis for the lattice.

\medskip

In case $(1.b)$, we will prove that, for each $e_i\in E(\Ga\setminus T)$, the linear combination of the anti-symmetric cycle $\tga_{e_i}^+-\iota\tga_{e_i}^+$,  is the following:
\begin{equation}\label{eq:generalequationombination}
      \tga_{e_i}^+-\iota\tga_{e_i}^+= \sum_{f_j\in E^{\wt T}_{\pi}}-\langle\tga_{e_i}^+,\tilde f_j^+\rangle\cdot \beta_{f_j}-\beta_{e_i}.
 \end{equation}

Let $e_i=u_iv_i\in E(\Ga\setminus T)$ and let $\underrightarrow{P_i}$, $\underrightarrow{\wt P_i^{+}}$ and $\underrightarrow{\iota\wt P_i^+}$  be the oriented paths associated to $\ga_{e_i}$ and $\tga_{ e_i}^+$, as in \cref{orientedpath}.
Therefore, the cycle $ \tga_{e_i}^+ $ can be written as

\begin{equation}\label{cicloei}
\begin{cases}
   \tga_{e_i}^+=\tilde e_i^++\underrightarrow{\wt P_i^+}+\tilde x_i^+ & \text{if} \quad P_i\cap V(P^{\wt T}_{\pi})\ne\emptyset\quad \text{and}\quad\tilde e_i^+=u_i^+v_i^-, \\
   \tga_{e_i}^+=\tilde e_i^++\underrightarrow{\wt P_i^+} & \,\,\,\,\quad \qquad \qquad\qquad\qquad\quad\qquad \text{otherwise}
\end{cases}
\end{equation}

depending on whether the path $P_i$ in $T$ contains vertices of $V(P^{\wt T}_{\pi})$ or not, and we denote by $x_i$ the $\pi$-parallel loops. 
Thus, the anti-symmetric cycle $\tga_{e_i}^+-\iota \tga_{e_i}^+$ is 
\[
\begin{cases}

\tga_{e_i}^+-\iota \tga_{e_i}^+ =\tilde e_i^++\underrightarrow{\wt P_i^+} -  \tilde e_i^--\underrightarrow{\iota\wt P_i^+}+\tilde x_i^+-\tilde x_i^- & \text{if} \quad P_i\cap V(P^{\wt T}_{\pi})\ne\emptyset\quad \text{and}\quad\tilde e_i^+=u_i^+v_i^-,\\

\tga_{e_i}^+-\iota \tga_{e_i}^+ =\tilde e_i^++\underrightarrow{\wt P_i^+} -  \tilde e_i^--\underrightarrow{\iota\wt P_i^+} & \,\,\,\,\quad \qquad \qquad\qquad\qquad\quad\qquad \text{otherwise}.
\end{cases}
\]
By \cref{eq1}, there are $n_d-1+k$ cycles $\beta_{f_i}$ such that $\tilde f_i^+\in\wt T$ but $\tilde f_i^-\not \in \wt T$, for  $i=1,\dots,n_d-1+k$.
Moreover, by \cref{rmk:edgeofthesecondkind}, each edge $f_i\in E^{\wt T}_{\pi}$  
defines a unique path $Q_i\in T$  
where we denote by $\underrightarrow{\wt Q_i^+}$ and $\underrightarrow{\iota\wt Q_i^+}$ the oriented paths in $\wt \Gamma$ for the cycle $\beta_{f_i}$ as in \cref{orientedpath}.

If $\tilde \ga_{e_i}^+$ does not intersect any of  the edges $\tilde f_i^+$, then we have 
$\iota\tilde \ga_{e_i}^+=\tga_{e_i}^-$ and the result follows
as in the previous case. From now on, we will assume that  $\tilde \ga_{e_i}^+$  passes through some of  the edges $\tilde f_i^+$, for  $1\leq i\leq n_d-1+k$.
First, suppose  there is exactly one edge $ f_1\in E^{\wt T}_{\pi}$. Assume, moreover, that $\langle\tga_{e_i}^+,\tilde f_1^+\rangle=-1$, the other case is analogous and will be discussed in the more general setting later. 
Then, if $|E^{\wt T}_{\pi}|=1$, we have the following cases: $(n_d,k)=(0,2)$, $(1,1)$, or $(2,0)$.

\begin{itemize}
    \item  The pair $(n_d,k)=(0,2)$: in this case there are $2=k=|P^{\wt T}_{\pi}|$ $\pi$-parallel loops, and denote them by $x_1,x_2$.  
Note that the paths $\underrightarrow{\wt Q_1^+},\underrightarrow{\wt P_i^+}$ 
belong to $\wt T$ and pass through the edge $\tilde f_1^+$, while the paths $\underrightarrow{\iota\wt Q_1^+},\underrightarrow{\iota \wt P_i^+}$ do not, because $\tilde f_1^-\not\in \wt T$.
Let us compute the cycle $\tga_{e_i}^-$.  Since it cannot contain $\tilde f_1^-\in \iota\wt  P_i^+$ and it has to be contained in $\wt T\cup \{\tilde e_i^-\}\subset \pi^{-1}(T)\cup \pi^{-1}(P_{\pi})^{+}\cup \{\tilde e_i^-\}$, then it will pass through the edges in $\iota\wt P_i^+\Delta\iota \wt Q_1^+$ and $\wt Q_1^+$ and in $\tilde x_1^+,\tilde x_2^+$ depending on whether the edge $\tilde e_i^-=\tilde u_i^-\tilde v_i^-$ or  $\tilde e_i^-=\tilde u_i^-\tilde v_i^+$ (see for instance the edges $\tilde e_1^-$ and $\tilde e_2^-$ in \cref{ex:condition+-}). 
 Here, denote by  $\underrightarrow{\iota\wt P_i^+\Delta\iota \wt Q_1^+}$  the path obtained from the edges in 
 $\iota\wt P_i^+\Delta\iota \wt Q_1^+$
 with orientation prescribed by $\underrightarrow{\iota\wt P_i^+}$ and $\underrightarrow{\iota \wt Q_1^+}$. %\giusi{The thing is that, in the intersection the edges have different orientation, therefore taking the operation $\Delta$ of those sets will have the edges twice with opposite orientation and sum up to zero. But I wanted to stress the fact that they cannot pass through those edges, therefore i like the $\Delta$ operation.}
Therefore,

\[
\begin{cases}
    \tga_{e_i}^-=\tilde e_i^-+
%\sum_{e\in \iota\wt  Q_1^+\Delta \iota\wt P_i^+}
\underrightarrow{\iota\wt P_i^+\Delta\iota \wt Q_1^+}
 -\underrightarrow{ \wt Q_1^+} + \tilde x_1^++ \tilde x_2^+& \text{if} \quad \tilde e_i^-=u_i^-v_i^-,\\
   \tga_{e_i}^-=\tilde e_i^-+
%+\sum_{e\in \iota\wt  Q_1^+\Delta \iota\wt P_i^+}e
\underrightarrow{\iota\wt P_i^+\Delta\iota \wt Q_1^+}
-\underrightarrow{ \wt Q_1^+} + \tilde x_1^+ & \text{if} \quad \tilde e_i^-=u_i^-v_i^+,
\end{cases}
\] 

 where we assume the orientation on the preimages $\pi$-parallel loops to be positive (the other case is analogous). 

 The difference between the two situations comes from the fact that if the cycle is contained in $\pi^{-1}(T)^+$, then $\tga_{e_i}^-$ is not contained in $\pi^{-1}(T)^-$ and it must traverse the two trees in correspondence of the (unique) two preimages of the $\pi$-parallel loops in order to form a simple cycle, otherwise it passes through only one $x_i$, WLOG $x_1$. Therefore, the cycle  $\iota\tga_{e_i}^-$ will be of the form:

%\[\begin{cases}    \iota \tga_{e_i}^-=\tilde e_i^++\sum_{e\in \wt Q^+\Delta \wt P_i^+ }e -\underrightarrow{\iota\wt Q_1^+} + \tilde x_1^-+ \tilde x_2^-& \text{if} \quad \tilde e_i^-=\tilde u_i^-\tilde v_i^-,\\  \iota \tga_{e_i}^-=\tilde e_i^++\sum_{e\in \wt Q^+\Delta \wt P_i^+ }e-\underrightarrow{\iota\wt Q_1^+}  + \tilde x_1^- & \text{if} \quad \tilde e_i^-=\tilde u_i^-\tilde v_i^+.\end{cases}\]
\[
\begin{cases}
    \iota\tga_{e_i}^-=\tilde e_i^++
%\sum_{e\in \wt  Q_1^+\Delta \wt P_i^+}e 
\underrightarrow{\wt P_i^+\Delta\wt Q_1^+}
 -\underrightarrow{ \iota \wt Q_1^+} + \tilde x_1^-+ \tilde x_2^-& \text{if} \quad \tilde e_i^-=u_i^-v_i^-,\\
   \iota\tga_{e_i}^-=\tilde e_i^++
%+\sum_{e\in \wt  Q_1^+\Delta \wt P_i^+}e 
\underrightarrow{\wt P_i^+\Delta \wt Q_1^+}
-\underrightarrow{ \iota\wt Q_1^+} + \tilde x_1^- & \text{if} \quad \tilde e_i^-=u_i^-v_i^+,
\end{cases}
\] 

We show the computation when $\tilde e_i^+=u_i^+v_i^+$ and $\tilde e_i^-=u_i^-v_i^-$, the other case is analogous. 
In this case, the cycle $\beta_{f_1}$ corresponds to 
\[
\beta_{f_1}=2\underrightarrow{\iota\wt Q_1^+}-2\underrightarrow{\wt Q_1^+}+\tilde x_1^+- \tilde x_1^-+ \tilde x_2^+- \tilde x_2^-.
\]
%The cycle $\beta_{f_1}$ intersects positively the edge $\tilde f_1^-$ by definition, therefore it will pass through the path $\wt Q_1^+$ with the same orientation as in $\tga_{e_i}^-$ and opposite orientation through $\iota\wt Q_1^+$.
Denoting by $\beta_{e_i}=\tga_{e_i}^--\iota\tga_{e_i}^-$, let us now compute  $\beta_{f_1}-\beta_{e_i}$.

\begin{equation}
    \begin{split}
        \beta_{f_1}-\beta_{e_i}&=2\underrightarrow{\iota\wt Q_1^+}-2\underrightarrow{\wt Q_1^+}+\tilde x_1^+- \tilde x_1^-+ \tilde x_2^+- \tilde x_2^-+\\
        &-(\tilde e_i^-+
\underrightarrow{\iota\wt P_i^+\Delta\iota \wt Q_1^+}
%\sum_{e\in \iota\wt Q_1^+\setminus \iota\wt P_i^+ }e+\sum_{e\in \iota\wt P_i^+\setminus \iota\wt Q_1^+ }e
 -\underrightarrow{\wt Q_1^+} + \tilde x_1^++ \tilde x_2^+)+\\
 &+\tilde e_i^++
 %\sum_{e\in \wt Q_1^+\setminus \wt P_i^+ }e+\sum_{e\in \wt P_i^+\setminus \wt Q_1^+ }e
%\sum_{e\in \wt Q^+\Delta \wt P_i^+ }e
\underrightarrow{\wt P_i^+\Delta \wt Q_1^+}
 -\underrightarrow{\iota\wt Q_1^+} + \tilde x_1^-+ \tilde x_2^-=\\
 & =-\underrightarrow{\wt Q_1^+}+\underrightarrow{\iota\wt  Q_1^+}-%\sum_{e\in \iota\wt Q_1^-\setminus \iota\wt P_i^+ }e-\sum_{e\in \iota\wt P_i^+\setminus \iota\wt Q_1^+ }e
%\sum_{e\in \iota\wt Q_1^+\Delta \iota\wt P_i^+ }e+
\underrightarrow{\iota\wt P_i^+\Delta\iota \wt Q_1^+}
+\underrightarrow{\wt P_i^+\Delta \wt Q_1^+}
%\sum_{e\in \wt Q_1^+\Delta \wt P_i^+ }e
-\tilde e_i^-+\tilde e_i^+%-\sum_{e\in\wt Q_1^+\setminus \wt P_i^+ }e+\sum_{e\in \wt P_i^+\setminus \wt Q_1^+ }e
=\\
&=\underrightarrow{\wt P_i^+} -\underrightarrow{\iota\wt P_i^+}-  \tilde e_i^-+\tilde e_i^+= \tga_{e_i}^+-\iota \tga_{e_i}^+ 
    \end{split}
\end{equation}

%\giusi{scrivi che l'intersezione è percorsa in direzioni diverse e quindi la differenza simmetrica la fai senza contare l'orientazione.}

\item 
The pair $(n_d,k)=(1,1)$:
since there is only one $\pi$-parallel loop that we call $x_1$, the cycles are of the form:

\[
\begin{cases}
    \tga_{e_i}^-=\tilde e_i^-+
%\sum_{e\in \iota\wt Q_1^+\Delta \iota\wt P_i^+ }e 
\underrightarrow{\iota\wt P_i^+\Delta\iota \wt Q_1^+}
 -\underrightarrow{\wt Q_1^+} + \tilde x_1^+& \text{if} \quad \tilde e_i^+=u_i^+v_i^+\\
   \tga_{e_i}^-=\tilde e_i^-+
%\sum_{e\in \iota \wt Q_1^+\Delta \iota\wt P_i^+ }e
\underrightarrow{\iota\wt P_i^+\Delta\iota \wt Q_1^+}
-\underrightarrow{\wt Q_1^+} & \text{if} \quad \tilde e_i^+=u_i^+v_i^-
\end{cases}
\] 
and
\[
\beta_{f_1}=2\underrightarrow{\iota\wt Q_1^+}-2\underrightarrow{\wt Q_1^+}+\tilde x_1^+- \tilde x_1^-.
\]
The computation is analogous.

\item The pair $(n_d,k)=(2,0)$: the cycle 
$
 \tga_{e_i}^-=\tilde e_i^-+
%\sum_{e\in \iota \wt Q_1^+\Delta \iota\wt P_i^+ }e
\underrightarrow{\iota\wt P_i^+\Delta\iota \wt Q_1^+}
-\underrightarrow{\wt Q_1^+}
$
and 
$
\beta_{f_1}=2\underrightarrow{\iota\wt Q_1^+}-2\underrightarrow{\wt Q_1^+},
$
and the result follows as in the previous case.

\end{itemize}

Therefore, we have found a $\ZZ$-linear combination of cycles $\beta_{f_1}-\beta_{e_i}$ in $\mathcal{B}_{\wt T}$ that gives rise to the cycle $\tilde\gamma_{e_i}^+-\iota \tilde\gamma_{e_i}^+$, when  $|E^{\wt T}_{\pi}|=1$.
\medskip
Suppose now $n_d-1+k\geq1$,
we will prove that the above formula generalises to: 
 \begin{equation}\label{eq:generalequationcombination}
      (\tga_{e_i}^+-\iota\tga_{e_i}^+)+(\tga_{e_i}^--\iota\tga_{e_i}^-)= \sum_{f_j\in E^{\wt T}_{\pi}}-\langle\tga_{e_i}^+,f_j^+\rangle\cdot \beta_{f_j},
 \end{equation}

that implies the linear combination in \cref{eq:generalequationombination}.
Suppose that the cycle $\ga_{e_i}$ intersects some $f_j\in E^{\wt T}_{\pi}$, for $2\leq j\leq n_d-1+k$. 
Then, the cycle $\tilde \ga_{e_i}^-=\wt T\cup \{\tilde e_i^-\}$ cannot pass through $\iota \wt P_i^+$, because $\iota \wt P_i^+$ contains the edges $\tilde f_j^-$ that are not in $\wt T$.
Thus, in $\pi^{-1}(T)^-$ it will pass through the edges in $\iota\wt P_i^{+}\Delta (\iota \wt Q_1^+\cup\dots\cup \iota\wt  Q_j^+)$.
Note that, when $m_d=0$, if the path $P_i$ in $T$ contains more than one subpath $Q_j$, say at least two $Q_{j_1},Q_{j_2}$, then there must exist another path $Q_{j_3}$ in $T$ connecting the endpoint of $Q_{j_1}$ to that of $Q_{j_2}$. Indeed, the endpoints of that path  belong again to $V(\Ga_{\dil})\cup V(P^{\wt T}_{\pi})$ and if there was no edge in $E^{\wt T}_{\pi}\cap Q_{j_3}$, then the preimage $\pi^{-1}(Q_{j_3})$ would necesarily contain a cycle since it cannot be of genus $0$ in case of  no dilated edges.
Therefore, the edges in $P_i$ but not in $Q_1\cup\dots \cup Q_j$ are  paths adjacent to the edge $e_i$ not in $T$. %\giusi{non so se è chiaro cosa intendo: voglio dire che $P_i$ può contenere lati non in $Q_j$ ma quei lati saranno tutti "esterni", cioè adiacenti al lato $e_i$  che chiude il ciclo. Non ci può essere un lato in mezzo a due $Q_i$ che non fa parte di un $Q_j$. } 

Moreover, in order to be a simple cycle, in $\pi^{-1}(T)^+$ it has to pass through the unique paths  $\wt Q_1^+\cup\dots\cup \wt Q_j^+$ with additional $\tilde x_i^+,\tilde x_j^+$ depending on whether the edges in $P_i\Delta (Q_i\cup\dots\cup Q_j)$ are attached to vertices
in $V(P^{\wt T}_{\pi})$.
 Again, we denote by $Q_j$  the unique path in $T$ containing $f_j\in E^{\wt T}_{\pi}$, as in \cref{orientedpath}.
The coefficients of the edges $\tilde f_j^{+}$ that appear in the cycle $\tga_{e_i}^-$ are prescribed by the cycle $\tga_{e_i}^+$:
\[
\langle\tga_{e_i}^-,\tilde f_j^{+}\rangle=\langle\tga_{e_i}^+,\tilde f_j^{+}\rangle=\pm 1,
\]
therefore,

\[
\langle(\tga_{e_i}^+-\iota\tga_{e_i}^+)+(\tga_{e_i}^--\iota\tga_{e_i}^-),\tilde f_j^{+}\rangle=\pm 2.
\]
%\[\langle(\tga_{e_i}^+-\iota\tga_{e_i}^+)+(\tga_{e_i}^--\iota\tga_{e_i}^-),\tilde f_i^-\rangle=\mp 2\]

Now, the cycle $\beta_{f_j}$ satisfies 
$\langle\beta_{f_j},\tilde f_j^{-} \rangle=2$ by definition, then 
the intersection with $\tilde f_j^+$ is
$\langle\beta_{f_j},\tilde f_j^{+} \rangle=-2$. It follows that in order to have that linear combination it is necessary to take into account also the parity given by the coefficient $\langle\tga_{e_i}^+,\tilde f_j^{+}\rangle$. Indeed,
\[
\langle\alpha\beta_{f_j},\tilde f_j^{+}\rangle = \langle(\tga_{e_i}^+-\iota\tga_{e_i}^+)+(\tga_{e_i}^--\iota\tga_{e_i}^-),\tilde f_j^{+}\rangle\quad \iff \alpha=-\langle\tga_{e_i}^+,\tilde f_j^{+}\rangle=\mp 1.
\]

Hence, any edge in $\wt Q_j^+$ and $\iota \wt Q_j^+$ that intersects the anti-symmetric cycle
has the exact multiplicity on both sides of \cref{eq:generalequationombination}.
Moreover, the preimage of each edge $e\in P_i$ that does not belong to any $\wt Q_j^+$, $\iota \wt Q_j^+$, will be contained in both $\tga_{e_i}^+$ and $\tga_{e_i}^-$ with opposite orientation and they sum up to zero. We have proved that any edge in the anti-symmetric cycle $(\tga_{e_i}^+-\iota\tga_{e_i}^+)+(\tga_{e_i}^--\iota\tga_{e_i}^-)$
appears in 
$ \sum_{j=1}^{n_d-1+k}-\langle\tga_{e_i}^+,\tilde f_j^{+}\rangle\cdot \beta_{f_j}$ with the same orientation and coefficient. For the other inclusion, there may be edges in the intersection of the paths $\wt Q^+_j$, $\iota\wt Q_j^+$ that  appear with a coefficient $2$ in each cycle $\beta_{f_j}$ that intersect. However,
they do not appear in the cycles $\tga_{e_i}^{\pm}$, because otherwise the cycle would pass through the same edge twice and the cycles are the smallest ones contained in $\wt T\cup \{\tilde e_i^{\pm}\}$. 
Thus, there may be more edges on the right hand side of \cref{eq:generalequationombination} and  must sum up to zero in order to have that equality.
Let us prove that, if $\langle \tga_{e_i}^+,\tilde f_1^+\rangle=\langle \tga_{e_i}^+,\tilde f_2^+\rangle$ they have opposite sign, otherwise they have same sign.

%\giusi{vorrei dire qualcosa del tipo: ho due cammini $Q_1$ e $Q_2$ con un lato in comune, quindi c'è una biforcazione. E cioè c'è un vertice in comune con due cammini  e se il nosto ciclo $\ga_{e_i} $ passa per entrambi i lati $f_1,f_2$ positivamente, quel vertice sarà pozzo per uno dei due lati, e.g. in $Q_1$ e sorgente per l'altro lato in $Q_2$, quindi il lato in comune sarà percorso in senso opposto dai cammini $Q_1$ e $Q_2$ di cui fa parte, perché i due cammini anche percorrono positivamente quei due lati $f_1,f_2$. Viceversa, se i due lati in questione, sono percorsi dal ciclo con segno opposto, allora quel vertice sarà pozzo o sorgente per entrambi i cammini e concludiamo.}

Suppose we have two paths $Q_i$, $Q_j$ that share a common edge $e$, in particular $e\not \in E^{\wt T}_{\pi}$ because there is exactly one edge in each path. Then at least one of the vertices of $e$ is contained in both paths.
If our cycle $\ga_{e_i}$
passes through $f_i,f_j$ with same positive orientation, then the common vertex
will be a sink for one of the paths (say $Q_i$) and a source for $Q_j$. Consequently the common edge is traversed in opposite orientation by the paths $Q_i, Q_j$, since both paths are also oriented positively along $f_i,f_j$ by definition.
Conversely, if the two edges 
 are traversed by the cycle with opposite signs, then the common vertex is either a sink or a source for both paths simultaneously, and the claim follows.

Let us show for completeness the computation when $k=0$ and there are only $n_d+1=j$ dilated vertices:

\begin{equation*}
    \begin{split}
        (\tga_{e_i}^+-\iota \tga_{e_i}^+)+(\tga_{e_i}^--\iota \tga_{e_i}^-)=& \tilde e_i^+-\tilde e_i^-+\tilde e_i^-+\tilde e_i^++\underrightarrow{\wt P_i^+}- \iota \underrightarrow{\wt P_i^+}+\underrightarrow{\iota \wt P_i^+\Delta( \iota \wt Q_1^+\cup\dots\cup \iota \wt Q_j^+)}+\\
&+\langle\tga_{e_i}^-,\tilde f_1^+\rangle\underrightarrow{\wt Q_1^+}+\dots+\langle\tga_{e_i}^-,\tilde f_j^+\rangle\wt Q_j^+
        - \underrightarrow{ \wt P_i^+\Delta( \wt Q_1^+\cup\dots\cup  \wt Q_j^+)}+\\&-\langle\tga_{e_i}^-,\tilde f_1^+\rangle\underrightarrow{\iota \wt Q_1^+}-\dots-\langle\tga_{e_i}^-,\tilde f_j^+\rangle\underrightarrow{\iota \wt Q_j^+}=\\=&-\langle\tga_{e_i}^-,\tilde f_1^+\rangle2\underrightarrow{\iota\wt Q_1^+}-\dots- \langle\tga_{e_i}^-,\tilde f_j^+\rangle2\underrightarrow{\iota\wt Q_j^+}+\\
        &+
        \langle\tga_{e_i}^-,\tilde f_1^+\rangle 2\underrightarrow{\wt Q_1^+}+\dots+\langle\tga_{e_i}^-,\tilde f_j^+\rangle 2\wt Q_j^+=\\
        =&-\langle\tga_{e_i}^+,\tilde f_1^+\rangle (2\iota\underrightarrow{\wt Q_1^+}-2\underrightarrow{\wt Q_1^+})- \dots -\langle\tga_{e_i}^+,\tilde f_j^+\rangle (2\iota\underrightarrow{\wt Q_j^+}-2\underrightarrow{\wt Q_j^+})=\\
=&\sum_{j=1}^{n_d-1+k}-\langle\tga_{e_i}^+,\tilde f_j^{+}\rangle\cdot \beta_{f_j}.
    \end{split}
\end{equation*}

\end{proof}

    \begin{figure}[H]
    \centering
\begin{tikzpicture}
%primo vertice a sx
%\coordinate (-1) at (-2,0);
%\coordinate (-1a) at (-2,2);
%\coordinate (-1b) at (-2,3);
\coordinate (0) at (-1,0);
%\coordinate (0a) at (-1,2.5+0.5);

\coordinate (1) at (0,0); %leftvertex
 \coordinate (a1) at (0,2); %preimage - of the vertex (1)
\coordinate (b1) at (0,4); %preimage + of the vertex (1)
\coordinate (2) at (1+1,0); %rightvertex
 \coordinate (a2) at (1+1,2); %preimage - of the vertex (2)
 \coordinate (b2) at (1+1,4); %preimage + of the vertex (2)
 \coordinate (a0) at (-1,2);
\coordinate (b0) at (-1,4);

 \coordinate (12) at (1,0);
 \coordinate (12a) at (1,2);
 \coordinate (12b) at (1,4);

 \coordinate (3) at (2+1,0);
 \coordinate (a3) at (2+1,2);
\coordinate (b3) at (2+1,4);

 \coordinate (4) at (2.2+1,0);
  \coordinate (a4) at (2.2+1,2);
  \coordinate (b4) at (2.2+1,5);
 \coordinate (c1) at (1.4-0.5,2.4+0.5); 
 \coordinate (c2) at (1.6-0.5,2.6+0.5);
 %contruction of the edge in the middle: c1 and c2 are the middle points so that the segment is cutted in half

 \foreach \i in {0,a0,b0,3,a3,12,12a,12b,b3,2,1,a2,a1,b2,b1}
 \draw[fill=black](\i) circle (0.15em);

%\draw[line width=1.5pt,black](0) circle (0.50em);
%\draw[line width=1.5pt,black](0a) circle (0.50em);
%\draw[line width=1.5pt,black](4) circle (0.50em);
%\draw[line width=1.5pt,black](a4) circle (0.50em);
%\draw[thin,black](-1) circle (0.50em);
%\draw[thin,black](-1a) circle (0.50em);
\draw[thin,black](-1.35,0) circle (1em);

\draw[thin, black] (0)--(1);
\draw[thin, black] (2)--(1);
\draw[thin, black] (a2)--(a1);
\draw[thick, blue] (b2)--(b1);

% \draw[thin, black] (1) to [out=90, in=90] (12);  
\draw[thin, black] (12) to [out=90, in=90] (3);
\draw[thin, black] (12b) to [out=90, in=90] (b3);
\draw[thin, black] (1) to [out=90+180, in=-90] (2);

%loop

 \draw[thin, black] (2) to [out=-90, in=180+45] (2.4,-0.5);
  \draw[thin, black] (2) to [out=0, in=45] (2.4,-0.5);

  %preimm loops
  \draw[thick, blue] (b2) to [out=-60, in=60] (a2);
  \draw[thin, black] (b2) to [out=180+60, in=90+30] (a2);

   \draw[thick, blue] (b0) to [out=-60, in=60] (a0);
  \draw[thin, black] (b0) to [out=180+60, in=90+30] (a0);

%\draw[thin, black] (b1) to [out=90, in=90] (12b);\draw[thin, black] (12b) to [out=90, in=90] (b2);
\draw[thin, black] (12a) to [out=90+180, in=-90] (a3);
%\draw[thin, black] (a1) to [out=90+180, in=-90] (12a);

\draw[thick, blue] (0)--(3);
\draw[thick, blue] (a1)--(a0);
\draw[thick, blue] (a1)--(12a);
\draw[thick, blue] (b1)--(b0);
  \draw[thin, black] (b1)--(a2);
\draw[thin, black] (a1)--(c1);
    \draw[thick, blue] (a3)--(a2);
    \draw[thick, blue] (b3)--(b2);
    \draw[thick, blue] (2)--(3);
    \draw[thin, black] (b2)--(c2);

  \draw (1,1.3) node [below] {$\downarrow$};
    %\draw (0) node  {$1$};
     %\draw (0a) node  {$1$};
  %\draw (4) node  {$1$};
   %\draw (a4) node  {$1$};
    \draw (1,-0.8) node  {$e_1$};
     \draw (2,0.8) node  {$e_2$};
     \draw (2,4.3) node  {$\tilde e_2^+$};
     \draw (2,1.7) node  {$\tilde e_2^-$};
     \draw (-0.5,-0.2) node  {$f_2$};
     \draw (2.7,-0.2) node  {$f_3$};
      \draw (2.5,-0.7) node  {$e_4$};
    \draw (1.5,-0.2) node  {$e_0$};
    %\draw (1.5,0.5) node  {$e_3$};
    \draw (0.5,-0.2) node  {$f_1$};
    %\draw (0.5,0.5) node  {$e_2$};
      \draw (0.5,3.2) node  {$\tilde e_1^+$};
        \draw (0.5,2.8) node  {$\tilde e_1^-$};

         \draw (-0.5,3.7) node  {$\tilde f_2^+$};
        \draw (-0.5,2.3) node  {$\tilde f_2^-$};
        \draw (2.7,3.7) node  {$\tilde f_3^+$};
        \draw (2.7,2.3) node  {$\tilde f_3^-$};
 \draw (2,3.5) node  {$\tilde e_4^+$};
        \draw (2,2.5) node  {$\tilde e_4^-$};
        
    \draw (1.4,3.8) node  {$\tilde e_0^+$};
    %\draw (1.4,3.8+0.8) node  {$\tilde e_3^+$};
     \draw (1.4,2.2) node  {$\tilde e_0^-$};
    % \draw (1.4,2.2-0.75) node  {$\tilde e_3^-$};
    \draw (0.5,3.8) node  {$\tilde f_1^+$};
    % \draw (0.5,3.8+0.8) node  {$\tilde e_2^+$};
     \draw (0.5,2.2) node  {$\tilde f_1^-$};
 %\draw (0.5,2.2-0.75) node  {$\tilde e_2^-$};
  \end{tikzpicture}
 \caption{A free double cover with two $\pi$-parallel loops in $P^{\wt T}_{\pi}.$ The admissible spanning tree $\wt T$ is highlighted in blue.}
    \label{ex:condition+-}
\end{figure}

In what follows we will show the relation with the basis already constructed in \cite{RZ_ngonal}. We have seen that the cycles $\beta_e$ are obtained by choosing a spanning tree and then making a choice for the admissible spanning tree of the cover graph. We will prove that one of these choices gives rise to the anti-symmetric cycles described in \cref{eq:Prymcbasis2} by Röhrle and Zakharov. Therefore, \cref{construction} gives an intrinsic way of defining the basis of anti-symmetric cycles.

\begin{prop}\label{prop:relationtwobases}
Let $\pi\colon \tGa\to\Ga$ be a harmonic double cover of metric graphs and let $T$ be a spanning tree of $\Ga$.
There exists an admissible spanning tree $\wt T$ with respect to $T$ such that the basis
$\mathcal B=\{\tilde\alpha_1^{+}- \tilde\alpha_1^-,\dots,\tilde\alpha_A^+-\tilde\alpha_A^-,2\tilde\beta_1,\dots,2\tilde\beta_B\}$ described in (\ref{eq:Prymcbasis2})
    coincides with $\mathcal{B}_{\wt T}$, up to a sign.
	\begin{proof}
		Suppose $\pi$ is free, then the basis $\mathcal B$ can be obtained by choosing $\wt T=\pi^{-1}(T)^+\cup \pi^{-1}(T)^-\cup \tilde e_g^+$ where $\tilde e_g^+$ traverses the two trees, up to a sign given by the orientation used in Construction $1$ which is different from the one used in \cite{RZ_ngonal}. 
        If $\pi$ has dilated,
		recall that in \cite{RZ_ngonal}, the autors first contract the dilation subgraph $\Ga_{\dil}$ into isolated dilated vertices $v_0',\dots,v'_{d-1}$ resulting a double cover $\tGa'\to \Ga'$, where $d$ is the number of connected components of $\Ga_{\dil}$, and then consider a second double cover $\wt \Ga''\to\Ga''$ obtained by replacing the dilated vertices with $\pi$-parallel loops. In this way, given a spanning tree $T''$ of $\Ga''$, the preimage $\pi^{-1}(T'')$ is always disconnected. %Contrary to \cref{construction}, where the preimage of any spanning tree is connected and can contain dilated edges.
        Running \cref{construction} on $\tGa'\to\Ga'$, using the same spanning tree $T'$, any admissible tree $\wt T'$ of $\tGa'$ is made by a choice of an edge  $e'\in T'$ such that $\tilde e'^-\not \in \wt T$. Between the two dilated vertices $v'_i$ and $v'_{i+1}$, choose such an edge $e_i'$ to be  adjacent to $v'_{i}$ and not a loop, and define
		\[
		\wt T'=\pi^{-1}(T)^+\cup\pi^{-1}(T)^-\setminus\cup_i\{\tilde e_i'^-\},
		\]

		in this way the cycle $\beta_{e'}$  corresponds to the cycle  $2\beta_i$ in \cite[Proposition 4.20]{RZ_ngonal}. Since there are no dilated edges in this double cover and $P^{\wt T'}_{\pi}=\emptyset$, the number of such cycles is $-m_d^T+n_d-1=n_d-1=B$.
		The rest of the cycles are done  in the same way by the two constructions up to a choice of orientation.
		And since \cref{construction} doesn't take into account the dilated edges, the basis for anti-symmetric cycles for $\wt\Ga'$ is the same as the basis for anti-symmetric cycles for $\tGa$.
	\end{proof}
\end{prop}

	\section{Continuity of the Prym construction}\label{sec:prymcontinuous}
Finally, we are ready to prove that it is possible to choose the basis of anti-symmetric cycles in such a way  that the Prym construction is continuous under edge contractions. 

Suppose that $\pi:\wt \Gamma\to \Gamma$ is obtained from $\pi':\wt \Gamma'\to \Gamma'$ by contracting a subgraph $S\subset \Gamma'$.
	Let $\wt S:=\pi'^{-1}(S)\subset \wt \Gamma$ be the preimage of $S$ under $\pi'$.
	In this case we have the following commutative diagram of metric graphs 
	
\begin{center}
\begin{tikzcd}
	\widetilde{\Gamma}' \arrow[rr, "h_{\wt S}"] \arrow[dd, "\pi'"] &  & \widetilde{\Gamma}'/\wt S =\tGa \arrow[dd, "\pi"] \\
	&  &                   \\
	\Gamma' \arrow[rr, "h_S"]                 &  &   \Gamma'/S =\Ga            
\end{tikzcd}
\end{center}

	where with $h_S$ (and $h_{\wt S}$) we denote the contraction map of $S$ (and $\wt S$).
    We will also call $h_S$ (and $h_{\wt S}$)
    the contraction morphisms on the underlying graphs by abuse of notation and denote with $h_S^E$  (resp. $h_{\wt S}^E$) the inclusion map on the edges $h_S^E: E(\Gamma'/S)\hookrightarrow E(\Gamma')$  induced by  $h_S$ (resp. $h_{\wt S}$). We have the following.

\begin{thm}\label{continuityofbasis}
	
	 Let $\pi:\wt \Gamma\to \Gamma$ be obtained from $\pi':\wt \Gamma'\to \Gamma'$ by contracting a subgraph $S\subset \Gamma'$ and let $\wt T$ be an admissible spanning tree of $\tGa$.
	There exists an admissible spanning tree $\wt T'$ for $\wt \Ga'$ such that $h_{\wt S}(\wt T')=\wt T$, where $h_{\wt S}$ is the contraction morphism.
    In particular, the basis of anti-symmetric cycles $\mathcal{B}_{\wt T'} $
    associated to the double cover $\pi'$ gets contracted to $\mathcal B_{\wt T}$. %In other words, the construction of the basis is continuous.
	
	\begin{proof}
		Let us consider first the case $S=\{e\}$, where $e$ is not a loop.

	%Denote with. Moreover, denote by $h_e$ the induced map on the graphs $h_e\colon E(\Ga)\cup V(\Ga)\to E(\Ga/e)\cup V(\Ga/e)$such that if $e=xy$ with $x,y\in V(\Ga)$, then $h_e^{-1}(x=y)=e$.

	Consider an orientation on $\Ga$ and a spanning tree $T$ of $\Gamma$. 
	Observe that $h^E_S(T)$ is disconnected and acyclic   and let  $T':=h^E_S(T)\cup e$. It is a spanning tree of $\Gamma'$ because $e$ is not a loop. 
	%Moreover, we can assume that $T'$ is obtained from $(h_e^E)^{-1}(T)$ by adding the edge $e$. 
	Assume that the orientation on $\Gamma$ is induced by the orientation on $\Gamma'$.	
	
	Consider now an admissible spanning tree $\wt T$ of $\wt\Gamma$ satisfying the properties of Construction \ref{construction} and construct the basis for anti-symmetric cycles $\mathcal B_{\wt T}$ as defined in \cref{def:basis}.

	%Consider a spanning tree $T_S$ of $S$ and a spanning tree $T$ of $\Gamma$ such that $T':=T_S\cup T$ is a spanning tree of $\Gamma'$. Let also $\wt T$ be a spanning tree of $\tGa$ as in \cref{construction}.
	
	We want to show that we can choose $\widetilde T'$ an admissible spanning tree of $\tGa'$ whose contraction gives $\widetilde T$. This will imply that the basis for anti-symmetric cycles $\mathcal B_{\wt T'}$ of $\widetilde \Gamma'$ will be contracted to the basis for anti-symmetric cycles $\mathcal B_{\wt T}$ of $\tGa$.
	
%	Suppose that $\tilde S$ is not connected. Then $\wt T_S:=\pi^{-1}(T_S)$ is a spanning forest of $\tilde S$ and  $\wt T':=\wt T\cup \wt T_S$ is a spanning tree of $\wt \Gamma'$. Moreover $\pi_{\tilde S}$ sends $\wt T'$ to $\wt T$.	One can check (essentially by construction) that if we repeat \cref{construction} for $\pi'$ and with respect to this tree, that the basis for anti-symmetric cycles that we construct will degenerate to the one associated to $\pi$ and $T$. 

	Choose \[\wt T'=h_{\wt S}^{E}(\wt T')\cup \wt S.\]
    It is always a spanning tree of $\tGa'$ since  $\wt S$, which corresponds to the set $\pi^{-1}(e)$, does not contain loops.
    Construct $\mathcal B_{\wt T'}$ the basis of anti-symmetric cycles on $\tGa'$. %We will then prove  that $\mathcal B_{\wt T'}$ contracts to $\mathcal B_{\wt T}$. 

Let us prove that the basis for anti-symmetric cycles $\mathcal B_{\wt T'}$ contracts to $\mathcal B_{\wt T}$.
  Since  $\wt S$ belongs to the spanning tree $\wt T$, then $S\not \in E_{nz}$ 
  and therefore
  the anti-symmetric cycles $\beta'_e\in \mathcal B_{\wt T'}$ satisfy $e\not \in S$. Moreover, any cycle that contains the edges in $\wt S$ will be sent under the contraction morphism $h_{\wt S}$ to the cycle in which their respective lengths become zero. The cycle $h_{\wt S}(\beta'_e)$ corresponds to  $\beta_e\in\mathcal B_{\wt T}$ by construction.
	  It follows that all the anti-symmetric cycles constructed for $\pi:\wt \Gamma\to \Gamma$ are limits of anti-symmetric cycles for $\wt \Gamma'\to\Gamma'$.

Suppose now that $S=\{e\}$ and $e$ is a loop. Then, $h_S^E(T)$ is already a spanning tree of $\Ga'$ because the contraction of the loop does not decrease the number of vertices.

The preimage $\pi^{-1}(e)$ can be one of the following types: if $e$ is not dilated, 
it consists of two parallel edges or  two loops,  otherwise $\pi^{-1}(e)$ consists of one dilated loop. 
  We are left with choosing the admissible spanning tree $\wt T'$ that contracts to $\wt T$. 
If the preimage consists of one or two loops, then 
choose $\wt T'=h_{\wt S}^E(\wt T)$ as spanning tree otherwise, since the contraction of two parallel edges $\tilde e^{\pm}$ decreases by one the number of vertices, take as spanning tree 
$h_{\wt S}^E(\wt T)$ together with $\tilde e^+$:
\[
\wt T'= h^E_{\wt S}(\wt T)\cup \tilde e^+,
\]
it is an admissible spanning tree by condition $(b)$ in \cref{construction}.

	Suppose now that $|S|>1$. We can repeat the discussion above by contracting each edge in $S$ at a time. %We have then two different situations: either $\wt T_S$ is a spanning subgraph of $\wt S$, or it is not.In the first case, the dilation cycle must intersect $S$. In the second case, there are no dilated vertices or edges in $S$ (and $S$ will be contracted in a dilated vertex of $\Gamma$.	Assume that we are in this second situation and observe that we must further consider two different situations: either $\wt T_S\cup \wt T$ is a spanning tree of $\wt \Gamma'$, or it is not.

Using the same argument as before it  is possible to show that $\mathcal B_{\wt T'}$
  contracts to $\mathcal B_{\wt T}$.
        Moreover, the intersection pairing between two anti-symmetric cycles is preserved under contraction. Indeed, the anti-symmetric cycles that contain the edges in $ \wt S$ will have now intersection $0$ on those edges, since the length $\ell(e)\to0$, while the intersection
        on the edges that are not affected by the contraction will be the same.
        
        %\[
       % \langle h_{\wt S}(\beta'_e),  ?\rangle 
       % \]

\end{proof}
\end{thm}

\begin{rmk}
	Notice that we have proved that this particular basis beahaves well under edge contractions, but the construction of the Prym variety is independent of the choice of the basis. Therefore, we will prove that actually the extended $\overline{\mathrm{Prym}_c(\pi)}$ is continuous under the Prym map from the moduli space of harmonic double covers to the moduli space of pptavs.
\end{rmk}

\begin{ex} It is important to allow preimages of $\pi$-parallel loops in the admissible tree in order to have continuity when contracting those edges as in \cref{fig:contraction}. The choice of another spanning tree of $\tGa''$ would have not been contracted to the unique admissible spanning tree $\wt T$. 

In this picture, we depict two contraction morphisms of a harmonic double cover of a graph of genus $3$ and the admissible spanning trees at each step. In order to have the same basis of cycles in the limit, we have to choose the spanning tree $\wt T''=\{\tilde e_1^+\}\cup \{\tilde e_3^+\}\cup \{\tilde e_0^+\}\cup \{\tilde e_2^+\}$ that contracts to $\wt T'=\{\tilde e_1^+\}\cup \{\tilde e_3^+\}\cup \{\tilde e_2^+\}$ and finally contracts to the unique possible spanning tree $\wt T'=\{\tilde e_1^+\}\cup \{\tilde e_3^+\}$ in $\tGa$.
\begin{center}
    \begin{figure}[H]
    \begin{tikzpicture}
				
	\coordinate (1x) at (0,0);
	\coordinate (3x) at (-4,0);
	\coordinate (a1x) at (0,2);
    \coordinate (a1xx) at (0,5);
	\coordinate (a3x) at (-4,2);
	\coordinate (b1x) at (0,3.5);
	\coordinate (b3x) at (-4,5);
	\coordinate (2x) at (-2,0);
	\coordinate (4x) at (-2,0);
	\coordinate (a2x) at (-2,2);
	\coordinate (a4x) at (-2,2);
	\coordinate (b2x) at (-2,5);
	\coordinate (b4x) at (-2,5);
				
				\foreach \i in {1x,2x,4x,3x,a1x, a1xx,a2x,a4x,b2x,b4x,a3x,b3x}
				\draw[fill=black](\i) circle (0.15em);
				\draw[thick, blue] (3x)--(2x);
					\draw[thick, blue] (4x)--(1x);
				%\draw[thin, black] (a4x)--(-1.95,3.3);
				%\draw[thin, black] (-2.09,3.65)--(b2x);
				%\draw[thin, black] (b4x)--(a2x);

				\node[circle, draw, minimum size=1cm] (c) at (-0.5-4,0) {};
				%\node[circle, draw, minimum size=0.4cm] (c1) at (2-4+2.2,0) {};
               % \node[circle, draw, minimum size=0.4cm] (c1) at (2-4+2.5-0.5+0.2,3.5) {};
					\node[circle, draw, minimum size=1cm] (c3) at (-2,-0.5) {};
				\node[circle, draw, minimum size=1cm] (c3) at (0.5,0) {};
				\draw[thin, black] (a3x)--(a4x);
				\draw[thick, blue] (b3x)--(b4x);
			\draw[thin, black] (a1x)--(a4x);
            \draw[thin, blue] (a1xx)--(b4x);
				%\draw[thick, blue] (b1x)--(b4x);

				\draw (-2,1.4) node [below] {$\downarrow$};
                 \draw (1.8,1.5) node [below] {$\longleadsto$};
                  \draw (1.5,1.2) node [below] {$h_{e_4}$};
				%\draw (0.2,3.7) node [below] {$1$};
\draw (2-4+2.9,-0.2) node [right] {$e_4$};
\draw (-5.2,0) node [below] {$e_0$};
\draw (-4.6+0.6,3.5) node [right] {$\tilde e_0^+$};
\draw (-5+0.5,3.5) node [left] {$\tilde e_0^-$};
\draw (-5-1,5) node [left] {$\wt \Ga''$};
\draw (-5-1,0) node [left] {$\Ga''$};
\draw (-3.2,0) node [below] {$e_1$};
\draw (-3.2,5) node [above] {$\tilde e_1^+$};
\draw (-3.2,2) node [below] {$\tilde e_1^-$};
  \draw (-1,0) node [below] {$e_3$};
  \draw (-1,2) node [below] {$\tilde e_3^-$};
  \draw (-1,5) node [above] {$\tilde e_3^+$};
  \draw (-2,-1) node [below] {$e_2$};
   \draw (-1.9,3.5) node [right] {$\tilde e_2^+$};      
   \draw (-2,3.5) node [left] {$\tilde e_2^-$};
    \draw (-0.65,3.5) node [right] {$\tilde e_4^-$}; 
    \draw (0.6,3.5) node [right] {$\tilde e_4^+$}; 
    
	\draw[thick, blue] (b3x) to [out=-45, in=45] (a3x);
				\draw[thin, black] (b3x) to [out=180+45, in=90+45] (a3x);
				\draw[thick, blue] (b4x) to [out=-45, in=45] (a4x);
				\draw[thin, black] (b4x) to [out=180+45, in=90+45] (a4x);
                \draw[thick, blue] (a1xx) to [out=-45, in=45] (a1x);
				\draw[thin, black] (a1xx) to [out=180+45, in=90+45] (a1x);
			
					\end{tikzpicture}
        \begin{tikzpicture}
				
	\coordinate (1x) at (0,0);
	\coordinate (3x) at (-4,0);
	\coordinate (a1x) at (0,3.5);
	\coordinate (a3x) at (-4,2);
	\coordinate (b1x) at (0,3.5);
	\coordinate (b3x) at (-4,5);
	\coordinate (2x) at (-2,0);
	\coordinate (4x) at (-2,0);
	\coordinate (a2x) at (-2,2);
	\coordinate (a4x) at (-2,2);
	\coordinate (b2x) at (-2,5);
	\coordinate (b4x) at (-2,5);
				
				\foreach \i in {1x,2x,4x,3x,a1x,a2x,a4x,b2x,b4x,a3x,b1x,b3x}
				\draw[fill=black](\i) circle (0.15em);
				\draw[thick, blue] (3x)--(2x);
					\draw[thick, blue] (4x)--(1x);
				%\draw[thin, black] (a4x)--(-1.95,3.3);
				%\draw[thin, black] (-2.09,3.65)--(b2x);
				%\draw[thin, black] (b4x)--(a2x);

				\node[circle, draw, minimum size=1cm] (c) at (-0.5-4,0) {};
				\node[circle, draw, minimum size=0.4cm] (c1) at (2-4+2.2,0) {};
                \node[circle, draw, minimum size=0.4cm] (c1) at (2-4+2.5-0.5+0.2,3.5) {};
					\node[circle, draw, minimum size=1cm] (c3) at (-2,-0.5) {};
				
				\draw[thin, black] (a3x)--(a4x);
				\draw[thick, blue] (b3x)--(b4x);
			\draw[thin, black] (a1x)--(a4x);
				\draw[thick, blue] (b1x)--(b4x);

				\draw (-2,1.4) node [below] {$\downarrow$};
                 \draw (1.6,1.5) node [below] {$\longleadsto$ };
                  \draw (1.5,1.2) node [below] {$h_{e_0}$};
				\draw (0.2,3.7) node [below] {$1$};
\draw (2-4+2.2,0.2) node [below] {$1$};
\draw (-5.2,0) node [below] {$e_0$};
\draw (-4.6+0.6,3.5) node [right] {$\tilde e_0^+$};
\draw (-5+0.5,3.5) node [left] {$\tilde e_0^-$};
\draw (-5-1,5) node [left] {$\wt \Ga''$};
\draw (-5-1,0) node [left] {$\Ga''$};
\draw (-3.2,0) node [below] {$e_1$};
\draw (-3.2,5) node [above] {$\tilde e_1^+$};
\draw (-3.2,2) node [below] {$\tilde e_1^-$};
  \draw (-1,0) node [below] {$e_3$};
  \draw (-1,2.8) node [below] {$\tilde e_3^-$};
  \draw (-1,4.9) node [below] {$\tilde e_3^+$};
  \draw (-2,-1) node [below] {$e_2$};
   \draw (-1.9,3.5) node [right] {$\tilde e_2^+$};      
   \draw (-2,3.5) node [left] {$\tilde e_2^-$};
	\draw[thick, blue] (b3x) to [out=-45, in=45] (a3x);
				\draw[thin, black] (b3x) to [out=180+45, in=90+45] (a3x);
				\draw[thick, blue] (b4x) to [out=-45, in=45] (a4x);
				\draw[thin, black] (b4x) to [out=180+45, in=90+45] (a4x);
			
					\end{tikzpicture}
          \begin{tikzpicture}
				
	\coordinate (1x) at (0,0);
	\coordinate (3x) at (-4,0);
	\coordinate (a1x) at (0,3.5);
	\coordinate (a3x) at (-4,3.5);
	\coordinate (b1x) at (0,3.5);
	\coordinate (b3x) at (-4,3.5);
	\coordinate (2x) at (-2,0);
	\coordinate (4x) at (-2,0);
	\coordinate (a2x) at (-2,2);
	\coordinate (a4x) at (-2,2);
	\coordinate (b2x) at (-2,5);
	\coordinate (b4x) at (-2,5);
				
				\foreach \i in {1x,2x,4x,3x,a1x,a2x,a4x,b2x,b4x,a3x,b1x,b3x}
				\draw[fill=black](\i) circle (0.15em);
				\draw[thick, blue] (3x)--(2x);
					\draw[thick, blue] (4x)--(1x);
				%\draw[thin, black] (a4x)--(-1.95,3.3);
				%\draw[thin, black] (-2.09,3.65)--(b2x);
				%\draw[thin, black] (b4x)--(a2x);

				\node[circle, draw, minimum size=0.4cm] (c) at (-4-0.2,0) {};
                \node[circle, draw, minimum size=0.4cm] (c) at (-4-0.2,3.5) {};
				\node[circle, draw, minimum size=0.4cm] (c1) at (2-4+2.2,0) {};
                \node[circle, draw, minimum size=0.4cm] (c1) at (2-4+2.5-0.5+0.2,3.5) {};
					\node[circle, draw, minimum size=1cm] (c3) at (-2,-0.5) {};
				
				\draw[thin, black] (a3x)--(a4x);
				\draw[thick, blue] (b3x)--(b4x);
			\draw[thin, black] (a1x)--(a4x);
				\draw[thick, blue] (b1x)--(b4x);

				\draw (-2,1.4) node [below] {$\downarrow$};
				\draw (0.2,3.7) node [below] {$1$};
                \draw (-4.2,3.7) node [below] {$1$};
                \draw (-4.2,0.2) node [below] {$1$};
\draw (2-4+2.2,0.2) node [below] {$1$};
%\draw (-5.2,0) node [below] {$e_0$};
%\draw (-4.6+0.6,3.5) node [right] {$\tilde e_0^+$};
%\draw (-5+0.5,3.5) node [left] {$\tilde e_0^-$};
\draw (-3.2,0) node [below] {$e_1$};
\draw (-3.2,4.9) node [below] {$\tilde e_1^+$};
\draw (-3.2,2.8) node [below] {$\tilde e_1^-$};
\draw (1,1.5) node [below] {$\longleadsto$};
\draw (1.2,1.2) node [below] {$h_{e_2}$};
  \draw (-1,0) node [below] {$e_3$};
  \draw (-1,2.8) node [below] {$\tilde e_3^-$};
  \draw (-1,4.9) node [below] {$\tilde e_3^+$};
  \draw (-2,-1) node [below] {$e_2$};
   \draw (-1.9,3.5) node [right] {$\tilde e_2^+$};      
   \draw (-2,3.5) node [left] {$\tilde e_2^-$};
   \draw (-4.5,4.9) node [left] {$\wt \Ga'$};
\draw (-4.5,0) node [left] {$\Ga'$};
				
				\draw[thick, blue] (b4x) to [out=-45, in=45] (a4x);
				\draw[thin, black] (b4x) to [out=180+45, in=90+45] (a4x);
			
					\end{tikzpicture}
          \centering
      \begin{tikzpicture}[scale=0.95]
				
	\coordinate (1x) at (0,0);
	\coordinate (3x) at (-4,0);
	\coordinate (a1x) at (0,3.5);
	\coordinate (a3x) at (-4,3.5);
	\coordinate (b1x) at (0,3.5);
	\coordinate (b3x) at (-4,3.5);
	\coordinate (2x) at (-2,0);
	\coordinate (4x) at (-2,0);
	\coordinate (a2x) at (-2,3.5);
	\coordinate (a4x) at (-2,3.5);
	\coordinate (b2x) at (-2,3.5);
	\coordinate (b4x) at (-2,3.5);
				
				\foreach \i in {1x,2x,4x,3x,a1x,a2x,a4x,b2x,b4x,a3x,b1x,b3x}
				\draw[fill=black](\i) circle (0.15em);
				\draw[thick, blue] (3x)--(2x);
					\draw[thick, blue] (4x)--(1x);

				\node[circle, draw, minimum size=0.4cm] (c) at (-4-0.2,0) {};
                \node[circle, draw, minimum size=0.4cm] (c) at (-4-0.2,3.5) {};
				\node[circle, draw, minimum size=0.4cm] (c1) at (2-4+2.2,0) {};
                \node[circle, draw, minimum size=0.4cm] (c1) at (2-4+2.5-0.5+0.2,3.5) {};
					\node[circle, draw, minimum size=0.4cm] (c3) at (-2,-0.2) {};
				\node[circle, draw, minimum size=0.4cm] (c3) at (-2,3.3) {};

				\draw (-2,1.9) node [below] {$\downarrow$};
				\draw (0.2,3.7) node [below] {$1$};
                \draw (-2,3.5) node [below] {$1$};
                \draw (-2,0) node [below] {$1$};
                   \draw (-4.5,4) node [left] {$\wt \Ga$};
\draw (-4.5,0) node [left] {$\Ga$};
                \draw (-4.2,3.7) node [below] {$1$};
                \draw (-4.2,0.2) node [below] {$1$};
\draw (2-4+2.2,0.2) node [below] {$1$};

\draw (-3.2,0) node [below] {$e_1$};
\draw (0.2,-1.2) node [below] {$.$};
\draw (-3.2,4.7) node [below] {$\tilde e_1^+$};
\draw (-3.2,3) node [below] {$\tilde e_1^-$};

  \draw (-1,0) node [below] {$e_3$};
  \draw (-1,3) node [below] {$\tilde e_3^-$};
  \draw (-1,4.7) node [below] {$\tilde e_3^+$};
  
				\draw[thick, blue] (b1x) to [out=90+45, in=45] (a4x);
				\draw[thin, black] (b1x) to [out=180+45, in=-45] (a4x);
				\draw[thick, blue] (b3x) to [out=45, in=90+45] (a4x);
				\draw[thin, black] (b3x) to [out=-45, in=180+45] (a4x);
			
					\end{tikzpicture}
                \caption{Contraction of the edges $e_4$, $e_0$ and $e_2$ and the relative admissible spanning trees highlighted in blue.}
    \label{fig:contraction}
            \end{figure}
\end{center}
\end{ex}

Recall that the tropical Prym--Torelli map $P^{trop}_g\colon \mathcal R^{trop}_ {g} \to \mathcal A^{trop,V}_{g-1} $ assigns to a harmonic double cover $\pi\colon \tGa_{\tilde w}\to\Ga_w$ its   extended Prym variety $\overline{\Prym_c(\pi)}$ as in Definition \ref{def:prymtorelli}.  The extended Prym is a torus of dimension $g-1$ defined by a positive semi-definite quadratic form and it follows from Remark \ref{rmk:quadraticform} that  $\overline{\Prym_c(\pi)}\in \mathcal{A}_{g-1}^{trop,V}$.

We are finally ready to prove our main theorem.
\begin{thm}\label{thm:prymtorellicontinuous}The tropical Prym--Torelli map $P^{trop}_g$ is continuous. \end{thm}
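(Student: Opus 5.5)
Since $\mathcal R^{trop}_g$ is the colimit of the cells $\mathcal R^{trop}_f\cong\R^n_{\geq 0}$, continuity of $P^{trop}_g$ reduces to two checks. First, on each cell, the map must be continuous in the edge lengths. Second, it must be compatible with the face maps $\mathcal R^{trop}_{f_0}\to\mathcal R^{trop}_f$ induced by edge contractions, and with automorphisms. Following Remark \ref{rmk:quadraticform}, we treat $\overline{\Prym_c(\pi)}$ as the positive semi-definite quadratic form on $\overline{\Im(\Id-\iota_*)}$ given by the induced polarization and the pairing $\overline{[\cdot,\cdot]}_P$. The target $\mathcal A^{trop,V}_{g-1}$ is the quotient of the space of such forms (with rational closure, via the Voronoi fan) by $GL_{g-1}(\ZZ)$. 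So it suffices, on each cell, to produce a lift to the space of forms that varies continuously in $(\ell_1,\dots,\ell_n)$, including at the boundary.

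Fix a combinatorial type $f\colon\wt G\to G$, a spanning tree $T$ and an admissible spanning tree $\wt T$. By Theorem \ref{thm:basis}, $\mathcal B_{\wt T}$ is a $\ZZ$-basis of $\Im(\Id-\iota_*)$. We complete it with $|\tilde w|-|w|$ extra generators to get a basis of the extended lattice. In this basis the Gram matrix has entries $\tfrac12\sum_{\tilde e}\langle\beta_e,\tilde e\rangle\langle\beta_{e'},\tilde e\rangle\ell(\tilde e)$; the factor $\tfrac12$ comes from the principal polarization $\zeta_c$, and its exact normalisation is irrelevant to continuity. On the weight directions the Gram matrix is zero. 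Each entry is linear in the $\ell_i$, so the map is continuous on the open cell and extends linearly to its closure.

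The key step is compatibility on faces. Let $\pi_0$ be obtained from $\pi$ by contracting $S$, namely the edges with $\ell_i=0$. By Theorem \ref{continuityofbasis}, admissible trees can be chosen compatibly: starting from an admissible $\wt T_0$ for $\pi_0$, there is an admissible $\wt T$ for $\pi$ with $h_{\wt S}(\wt T)=\wt T_0$, so each $\beta_e\in\mathcal B_{\wt T}$ maps to $\beta_e\in\mathcal B_{\wt T_0}$ or to zero. Setting $\ell=0$ on $\wt S$ in the Gram matrix of $\mathcal B_{\wt T}$ reproduces the Gram matrix of $\mathcal B_{\wt T_0}$, bordered by zero rows and columns.

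The main obstacle is to show that the cycles of $\mathcal B_{\wt T}$ that die, or become supported on $\wt S$, correspond exactly to the new dimensions $\ZZ^{|\tilde w_0|-|w_0|}-\ZZ^{|\tilde w|-|w|}$ of the extended Prym of $\pi_0$, and not to a sublattice of index $2$. This is precisely where the factor $2$ on the $\beta_{f_i}$ matters, as in Example \ref{ex:continuity}. I would count ranks using Lemma \ref{lem:weightsformula} and Remark \ref{rmk:relationbetweenBettinum}. The total $g_{\tGa}-g_\Ga$ is preserved under contraction, so the number of lost basis vectors equals the increase $\Delta(|\tilde w|-|w|)$. Whether a lost vector survives with its factor or becomes a weight direction can then be settled case by case, according to whether the contracted edge is a non-loop, a $\pi$-parallel loop, a loop with two loop preimages, or a dilated loop, as in the proof of Theorem \ref{continuityofbasis}.

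Once we have a single basis in which the quadratic form on the face is the limit of the forms on the cell, the images agree in the quotient by $GL_{g-1}(\ZZ)$. Automorphisms only change the basis by an integral change of coordinates, so the lifts glue to a continuous map on the colimit $\mathcal R^{trop}_g$. This proves Theorem \ref{thm:prymtorellicontinuous}.
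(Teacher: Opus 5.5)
Your proposal is correct and follows the paper's own argument. You choose compatible admissible trees via Theorem \ref{continuityofbasis}, note that the Gram matrix of $\mathcal B_{\wt T}$ is linear in the edge lengths, and obtain the limit by setting $\ell=0$ on $\wt S$ and padding with zeros for the weight directions; you organise this more carefully than the paper, as cellwise continuity plus compatibility with faces, automorphisms and the $GL_{g-1}(\ZZ)$-quotient. The ``main obstacle'' you single out is already settled by your previous paragraph: the cycles that die have Gram rows and columns tending to zero, so no index question arises for them, the surviving cycles map onto the $\ZZ$-basis $\mathcal B_{\wt T_0}$ by Theorems \ref{continuityofbasis} and \ref{thm:basis}, and the number of zero directions is forced by your own count $g_{\tGa}-g_{\Ga}=g-1$.
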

\begin{proof}
First of all, the Prym variety of a harmonic double cover is well defined. Indeed,
let $\pi:\wt \Gamma_{\tilde w}\to \Gamma_w$ be obtained  from $\pi':\wt \Gamma'_{\tilde w'}\to \Gamma'_{w'}$ by contracting a subgraph $S'\subset \Gamma'$  and 
 from $\pi'':\wt \Gamma''_{\tilde w''}\to \Gamma''_{w''}$ by contracting a subgraph $S''\subset \Gamma''$ and their preimages.  From \cref{continuityofbasis}, given a basis for anti-symmetric cycles $\mathcal B_{\wt T}$ it is possible to choose bases $\mathcal{B}_{\wt T'}$ and $\mathcal{B}_{\wt T''}$ that behave well under edge contractions, namely they both contract to  $\mathcal B_{\wt T}$.

However, some of the cycles can be contracted to the zero cycle.    In that case, the first Betti number of the graph $\tGa_{\tilde w}$ decreases, increasing the total weigth $|\tilde w|$ of the graph and the rank of the lattice $\mathrm{Im} (\Id-\iota_*)$ drops.   The dimension of the integral torus  that defines the Prym variety is not the same anymore, but   the exended Prym variety $\overline{\mathrm{Prym}_c(\pi)}$, namely obtained by adding $|\tilde w|$ dimensions to the torus as in \cref{def:extendedPrym}, has the correct dimension and  corresponds to the limit of the Prym variety that we started with.
Indeed,  let $\pi_i:\wt\Gamma_i\to\Gamma_i$ be a sequence of double covers in the interior of a single cone of $\calR_g$  converging to a dilated double cover $\pi_0:\wt\Gamma_0\to\Gamma_0$ and let $\wt S_i $ and $S_i$ be the subgraphs contracted at each step.
We claim that  
\[\lim_{i\to\infty} \overline{\Prym_c(\pi_i) }= \overline{\Prym_c(\pi_0)}.\]
 Since $\pi_i\overset{i\to\infty}{\longrightarrow}\pi_0$, it follows that $\ell_i(e)$ converges to 0 for all  $e\in\wt S$. Moreover, from \cref{def:extendedPrym} and from \cref{continuityofbasis}, the integration pairing on $\overline{\Prym_c(\pi_0)}$ is given by the integration pairing on any $\overline{\Prym_c(\pi_i)} $ by setting to zero the intersection between anti-symmetric cycles and edges in $\wt S$. The result follows.
\end{proof}

\bibliographystyle{alpha}
\bibliography{Biblio}

\end{document}